\newcolumntype {Q}{>{$\displaystyle}l<{$}}
\newcolumntype {A}{>{$}c <{$}}
\def\tr{\mathop{\text{tr}}\kern.2ex}
\def\P{{\mathbb P}}
\def\E{{\mathbb E}}
\def\Z{{\mathbb Z}}
\def\Sb{{\mathbf S}}
\def\T{{\intercal}}
\def\card{\mathop{\text{card}}}
\long\def\comment#1{}
\def\tr{\mathop{\text{Tr}}}
\providecommand{\norm}[1]{\vvvert#1\vvvert}
\newcommand{\bel}{\begin{eqnarray}\label}
\newcommand{\eel}{\end{eqnarray}}
\newcommand{\bes}{\begin{eqnarray*}}
\newcommand{\ees}{\end{eqnarray*}}
\def\reals{{\mathbb{R}}}
\def\R{{\reals}}
\def\T{{\sf T}}
\let\bar\overline
\numberwithin{equation}{section}
\numberwithin{theorem}{section}
\numberwithin{corollary}{section}
\numberwithin{asmp}{section}
\numberwithin{definition}{section}
\begin{document}

\setlength{\abovedisplayskip}{5pt}
\setlength{\belowdisplayskip}{5pt}
\setlength{\abovedisplayshortskip}{5pt}
\setlength{\belowdisplayshortskip}{5pt}

\title{\LARGE Moment bounds for large autocovariance matrices under dependence}

\author{Fang Han\thanks{Department of Statistics, University of Washington, Seattle, WA 98195, USA; e-mail: {\tt fanghan@uw.edu}}~~~and~ Yicheng Li\thanks{Department of Statistics, University of Washington, Seattle, WA 98195, USA; e-mail: {\tt yl83@uw.edu}}}

\date{}

\maketitle
\begin{abstract}
The goal of this paper is to obtain expectation bounds for the deviation of large sample autocovariance matrices from their means under weak data dependence. While the accuracy of covariance matrix estimation corresponding to independent data has been well understood, much less is known in the case of dependent data. We make a step towards filling this gap, and establish deviation bounds that depend only on the parameters controlling the ``intrinsic dimension" of the data up to some logarithmic terms. Our results have immediate impacts on high dimensional time series analysis, and we apply them to high dimensional linear VAR($d$) model, vector-valued ARCH model, and a model used in \cite{banna2016bernstein}. 
\end{abstract}

{\bf Keywords:} Autocovariance matrix, effective rank, weak dependence, $\tau$-mixing.

{\bf Mathematical subject classification (2000):} 60E15, 60F10.

\section{Introduction}\label{sec:intro}
Consider a sequence of $p$-dimensional mean-zero random vectors $\{\bY_t\}_{t\in\Z}$ and a size-$n$ fraction $\{\bY_i\}_{i=1}^n$ of it. This paper aims to establish moment bounds for the spectral norm deviation of lag-$m$ autocovariances of $\{\bY_i\}_{i=1}^n$, $\hat\bSigma_m:=(n-m)^{-1}\sum_{i=1}^{n-m}\bY_i\bY_{i+m}^\T$, from their mean values. 

A first result at the origin of such problems concerns product measures, with $m=0$ and $\{\bY_i\}_{i=1}^n$ independent and identically distributed (i.i.d.). For this, \cite{rudelson1999random} derived a bound on $\E\norm{\hat\bSigma_0-\E\hat\bSigma_0}$, %the following result,
%\begin{align}\label{eq:rudelson}
%\E\Big\|\hat\bSigma_0-\bSigma_0\Big\| \leq \max\{\norm{\bSigma_0}^{1/2}\delta,\delta^2\},~~\delta:=C\sqrt{\frac{\log p}{n}\E\Big[\max_{1\leq i\leq n}\norm{\bY_i}_2^2\Big]},
%\end{align}
where %$\bSigma_0:=\E\bY_1\bY_1^\T$, $C>0$ is a universal constant, $\norm{\cdot}_2$ represents the Euclidean norm for vectors, and 
$\norm{\cdot}$ represents the spectral norm for matrices. The technique is based on symmetrization and the derived maximal inequality is a consequence of a concentration inequality on a ``symmetrized" version of $p\times p$ symmetric and deterministic matrices, $\Ab_1,\ldots,\Ab_n$ (cf. \cite{oliveira2010sums}). That is, for any $x\geq0$,
\begin{align}\label{eq:rudelson2}
\P\Big(\Big\|\sum_{i=1}^n\epsilon_i\Ab_i\Big\|\geq x \Big)\leq 2p\exp\{-x^2/(2\sigma^2)\},~~\sigma^2:=\Big\|\sum_{i=1}^n\Ab_i^2\Big\|,
\end{align}
where $\{\epsilon_i\}_{i=1}^n$ are independent and taking values $\{-1,1\}$ with equal probability. The applicability of this technique then hinges on the assumption that the data are i.i.d.. 

Later, \cite{vershynin2010introduction}, \cite{srivastava2013covariance}, \cite{mendelson2014singular}, \cite{lounici2014high}, \cite{bunea2015sample}, \cite{tikhomirov2017sample}, among many others, derived different types of deviation bounds for $\hat\bSigma_0$ under different distributional assumptions. For example,  \cite{lounici2014high} and \cite{bunea2015sample} showed that, for such $\{\bY_i\}_{i=1}^n$ that are subgaussian and i.i.d., 
\begin{align}\label{eq:key0}
\E\norm{\hat\bSigma_0-\bSigma_0}\leq C\norm{\bSigma_0}\Big\{\sqrt{\frac{r(\bSigma_0)\log (ep)}{n}}+\frac{r(\bSigma_0)\log (ep)}{n} \Big\}.
\end{align}
Here $C>0$ is a universal constant, $\bSigma_0:=\E\bY_1\bY_1^\T$, and $r(\bSigma_0):=\tr(\bSigma_0)/\norm{\bSigma_0}$ is termed the ``effective rank" \citep{vershynin2010introduction} where $\tr(\Xb) := \sum_{i = 1}^p\Xb_{i,i}$ for any real $p\times p$ matrix $\Xb$.

Statistically speaking, Equation \eqref{eq:key0} is of rich implications. For example, combining \eqref{eq:key0} with Davis-Kahan inequality \citep{davis1970rotation} suggests that the principal component analysis (PCA), a core statistical method whose aim is to recover the leading eigenvectors of $\bSigma_0$, could still produce consistent estimators even if the dimension $p$ is much larger than the sample size $n$, as long as the ``intrinsic dimension" of the data, quantified by $r(\bSigma_0)$, is small enough. See Section 1 in \cite{han2018eca} for more discussions on the statistical performance of PCA in high dimensions.

%Inequality \eqref{eq:tropp} is of fundamental importance in approaching our problem. Theoretically speaking, 

The main goal of this paper is to give extensions of the deviation inequality \eqref{eq:key0}  to large autocovariance matrices, where the matrices are constructed from a high dimensional structural time series. Examples of such time series include linear vector autoregressive model of lag $d$ (VAR($d$)), vector-valued autoregressive conditionally heteroscedastic (ARCH) model, and a model used in \cite{banna2016bernstein}. The main result appears below as Theorem \ref{thm:subgaummb}, and is nonasymptotic in its nature. 
This result will have important consequences in high dimensional time series analysis. For example, it immediately yields new analysis for estimating large covariance matrix \citep{chen2013covariance},  a new proof of consistency for Brillinger's PCA in the frequency domain (cf. Chapter 9 in \cite{brillinger2001time}), and we envision that it could facilitate a new proof of consistency for the PCA procedure proposed in \cite{chang2018principal}.

The rest of the paper is organized as follows. Section \ref{sec:thm} characterizes the settings and gives the main concentration inequality for large autocovariance matrices. In Section \ref{sec:app}, we present applications of our results to some specific time series models. Proofs of the main results are given in Section \ref{sec:proof}, with more relegated to an appendix.

\section{Main results}\label{sec:thm}
We first introduce the notation that will be used in this paper. Without further specification, we use bold, italic lower case alphabets to denote vectors, e.g., $\bu = (u_1,\cdots,u_p)^\T$ as a $p$-dimensional real vector, and $\norm{\bu}_2$ as its vector $L_2$ norm. We use bold, upper case alphabets to denote matrices, e.g., $\Xb = (X_{i,j})$ as a $p \times p$ real matrix, and $\Ib_p$ as the $p\times p$ identity matrix. Throughout the paper, let $c, c', C, C', C''$ be generic universal positive constants, whose actual values may vary at different locations. %If there exists a positive universal constant $C$ such that $f(x) \leq Cg(x)$ for all large enough $x$, we write $f(x) = O\{g(x)\}$. 
For any two sequences of positive numbers $\{a_n\}, \{b_n\}$, we denote $a_n=O(b_n)$ if there exists an universal constant $C$ such that $a_n \leq Cb_n$ for all $n$ large enough. We write $a_n\asymp b_n$ if both $a_n=O(b_n)$ and $b_n=O(a_n)$ hold.

Consider a time series $\{\bY_t\}_{t\in\Z}$ of $p$-dimensional real entries $\bY_t\in\reals^p$ with $\R,\Z$ denoting the sets of real and integer numbers respectively. In the sequel, the considered time series does not need to be stationary nor centered, and we are focused on a size-$n$ fraction of it. Without loss of generality, we denote this fraction to be $\{\bY_i\}_{i=1}^n$. 

As described in the introduction, the case of independent $\{\bY_i\}_{i=1}^n$ has been discussed in depth in recent years. We are interested here in the time series setting, and our main emphasis will be to describe nontrivial but easy to verify cases for which Inequality \eqref{eq:key0} still holds. The following four assumptions are accordingly made, with the notations that 
\[
\mathbb{S}^{p-1} := \{\bx \in \R^p: \norm{\bx}_2 = 1\}, \quad \bar{\mathbb{S}}^{p-1} := \{\bx \in \R^p: |x_1| = \dots = |x_p| = 1\},
\] 
and 
\[
\norm{X}_{L(p)}:=(\E|X|^p)^{1/p}, \quad \norm{X}_{\psi_2} := \inf \{k \in (0, \infty): \E[\exp\{(|X|/k)^2\} -1]\leq 1\} 
\]
for any random variable $X$.

\begin{enumerate}[label=\textbf{(A\arabic*)}]
	
		\item\label{A1} Define 
	\begin{align*}
	&\kappa_1 := \sup_{t \in \Z} \sup_{\bu\in \mathbb{S}^{p-1}} \norm{\bu^\T \bY_t}_{\psi_2} < \infty, ~~~ \kappa_* := \sup_{t \in \Z} \sup_{\bv\in \bar{\mathbb{S}}^{p-1}} \norm{\bv^\T \bY_t}_{\psi_2} < \infty.
	\end{align*}
	Note that $\kappa_1$ is the supremum taken over vectors in the unit hypersphere, while $\kappa_*$ is the supremum taken over vectors in the discrete hypercube.

	\item \label{A2} Assume that there exist some constants $\gamma_1, \gamma_2, \epsilon >0$ such that for any integer $j$, there exists a sequence of random vectors $\{\tilde{\bY}_t\}_{t > j}$ which is independent of $\sigma(\{\bY_t\}_{t \leq j})$, identically distributed as $\{\bY_t\}_{t > j}$, and for any integer $k \geq j+1$,
	\[
	\norm{\norm{\bY_k-\tilde{\bY}_k}_2 }_{L(1+\epsilon)} \leq \gamma_1\kappa_1 \exp\{-\gamma_2(k-j-1)\}.
	\]

	\item \label{A3} Assume that there exist some constants $\gamma_3, \gamma_4, \epsilon >0$ such that for any integer $j$, there exists a sequence of random vectors $\{\tilde{\bY}_t\}_{t > j}$ which is independent of $\sigma(\{\bY_t\}_{t \leq j})$, identically distributed as $\{\bY_t\}_{t > j}$, and for any integer $k \geq j+1$,
	\[
	\sup_{\bu\in \mathbb{S}^{p-1}}\norm{(\bY_k-\tilde{\bY}_k)^\T\bu}_{L(1+\epsilon)}\leq \gamma_3\kappa_1 \exp\{-\gamma_4(k-j-1)\}.
	\]
	
	\item\label{A4} Assume there exists an universal constant $c > 0$ such that, for all $t \in \Z$ and for all $\bu \in \R^{p}$, $\norm{\bu^{\T}  \bY_t}_{\psi_2}^2\leq  c\E(\bu^{\T}\bY_t)^2$ .
\end{enumerate}

Two observations are in order. We first define a generalized ``effective rank" as follows:
	$$r_* := \kappa_*^2/\kappa_1^2.$$
It is easy to see the close relationship between $r_*$ and the effective rank highlighted in \eqref{eq:key0}. As $\bY_t\sim N(\zero,\bSigma_0)$, $\kappa_1^2$ and $\kappa_*^2$ scale at the same orders of $\norm{\bSigma_0}$ and $\tr(\bSigma_0)$, and the same observation applies to all subgaussian distributions with the additional condition \ref{A4}, which is identical to Assumption 1 in \cite{lounici2014high}. As a matter of fact, $r_*$ could be considered as a natural generalized version of $r(\bSigma_0)$ without these additional assumptions, and is used in our main theorem.

Secondly, we note that Assumptions \ref{A2} and \ref{A3} are characterizing the intrinsic coupling property of the sequence. 
In practice, such couples can be constructed from time to time. Consider, for example, the following causal shift model,
\[
\bY_t=H_t(\xi_t,\xi_{t-1},\xi_{t-2},\ldots),
\]
where $\{\xi_t\}_{t\in\Z}$ consists of independent elements with values in a measurable space $\cX$ and $H_t:\cX^{\Z^+}\to \reals^p$ is a vector-valued function. Then it is natural to consider
\[
\tilde\bY_t=H_t(\xi_t,\ldots\xi_{j+1},\tilde\xi_{j},\tilde\xi_{j-1},\ldots)
\]
for an independent copy $\{\tilde\xi_t\}_{t\in\Z}$ of $\{\xi_t\}_{t\in\Z}$. 

%It is immediate that these two assumptions are related to the $\tau$-mixing condition and hence are assumptions on temporal dependence strength. They simplify the presentation of the original $\tau$-measure of dependence, while requiring a stronger moment condition for facilitating the calculation of variances of autocovariances. They are also strongly related to causal shifts models and 

The following is the main result of this paper. 

%Under Assumptions \ref{A1}-\ref{A3}, the following theorem provides expectation bounds for the deviation of sample autocovariance matrices from its means. Here and after, we used 

\begin{theorem}[Proof in Section \ref{subsec:thm2.1}]\label{thm:subgaummb}
	Let $\{\bY_t\}_{t \in \Z}$ be a sequence of random vectors satisfying Assumptions \ref{A1}-\ref{A3} and recall $r^*=\kappa_*^2/\kappa_1^2$. Assume $\gamma_1 = O(\sqrt{r^*})$ and $\gamma_3 = O(1)$. Then, for any integer $n \geq 2$ and $0\leq m \leq n-1$, we have
	\begin{align}\label{eq:main}
	\E\norm{\hat{\mathbf{\Sigma}}_m - \E\hat{\mathbf{\Sigma}}_m} \leq C\kappa_1^2\Big\{\sqrt{\frac{r_*\log ep}{n-m}} + \frac{r_*\log ep (\log np)^3}{n-m}  \Big\}
	\end{align}
	for some constant $C$ only depending on $\epsilon, m, \gamma_2, \gamma_4$. If in addition, $\{\bY_t\}_{t \in \Z}$ is a second-order stationary sequence of mean-zero random vectors and Assumption \ref{A4} holds, then
	$$\E\norm{\hat{\mathbf{\Sigma}}_m - \E\hat{\mathbf{\Sigma}}_m} \leq C'\norm{\mathbf{\Sigma}_0}\Big\{\sqrt{\frac{r(\bSigma_0)\log ep}{n-m}} + \frac{r(\bSigma_0)\log ep (\log np)^3}{n-m}  \Big\}$$
	for some constant $C'$ only depending on $\epsilon,c, m, \gamma_2, \gamma_4$. 
\end{theorem}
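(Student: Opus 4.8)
The plan is to reduce the first display to a matrix concentration inequality for an \emph{independent} array obtained from $\{\bY_i\}$ by coupling, and then to read off the second display by sharpening two scalar inputs under \ref{A4}. Throughout, write $\Xb_i := \bY_i\bY_{i+m}^\T - \E[\bY_i\bY_{i+m}^\T]$, so that the target is $\E\norm{(n-m)^{-1}\sum_{i=1}^{n-m}\Xb_i}$. For $m\geq 1$ the summands are not symmetric, so I would pass to the Hermitian dilation
\[
\bm{D}_i := \begin{pmatrix} \bm{0} & \bY_i\bY_{i+m}^\T \\ \bY_{i+m}\bY_i^\T & \bm{0}\end{pmatrix},
\]
whose spectral norm equals $\norm{\bY_i\bY_{i+m}^\T}$, reducing the problem to a symmetric $2p\times 2p$ sum (for $m=0$ the summands are already symmetric). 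The only distributional inputs I will use are two scalar quantities: the \emph{directional} bound $\sup_{\bu}\norm{\bu^\T\bY_t}_{\psi_2}^2 \leq \kappa_1^2$ (this is \ref{A1}) and the \emph{radial} bound $\norm{\,\norm{\bY_t}_2^2\,}_{\psi_1}\leq\kappa_*^2$, the latter obtained by writing $\norm{\bY_t}_2^2 = \E_{\bv}(\bv^\T\bY_t)^2$ for $\bv$ uniform on $\{\pm1\}^p$ and using convexity of the $\psi_1$-norm together with $\norm{(\bv^\T\bY_t)^2}_{\psi_1}=\norm{\bv^\T\bY_t}_{\psi_2}^2\leq\kappa_*^2$.

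Next I would handle the dependence by truncation, blocking, and coupling. Because a product of two sub-Gaussians is only sub-exponential, I would truncate $\norm{\bY_i}_2\norm{\bY_{i+m}}_2$ at a level $\tau\asymp\kappa_*^2\log(np)$; by the radial $\psi_1$-bound the discarded mass is polynomially small in $np$ and contributes negligibly in expectation. I would then partition $\{1,\dots,n-m\}$ into consecutive blocks of length $\ell\asymp\log(np)$ (inflated by $O(m)$ to absorb the lag), and invoke \ref{A2}--\ref{A3} to replace, block by block, the observations by a copy independent of the entire past, so that the resulting block-sums $\bm{B}_1,\bm{B}_2,\dots$ become mutually independent. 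The per-block replacement error is $(\bY_i-\tilde\bY_i)\bY_{i+m}^\T+\tilde\bY_i(\bY_{i+m}-\tilde\bY_{i+m})^\T$ in operator norm, bounded through \ref{A2} (radial) and \ref{A3} (directional) together with the moment bounds above; with the geometric factors $\exp\{-\gamma_2\ell\}$, $\exp\{-\gamma_4\ell\}$ and $\ell\asymp\log(np)$ the total coupling remainder is $O((np)^{-1})$ after scaling, hence dominated by the principal terms. The normalizations $\gamma_1=O(\sqrt{r_*})$ and $\gamma_3=O(1)$ are exactly what keeps this remainder below the stated bound.

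With independent, bounded block-summands in hand, I would apply a matrix Bernstein inequality---equivalently, symmetrize across the independent blocks and feed the conditionally deterministic matrices into the concentration bound \eqref{eq:rudelson2}---to obtain, for $\sum_j(\bm{B}_j-\E\bm{B}_j)$, a deviation of order $\sqrt{v\log(ep)}+R\log(ep)$, where $v=\norm{\sum_j\E(\bm{B}_j-\E\bm{B}_j)^2}$ and $R\lesssim\ell\,\tau$ bounds a block-summand. The crux is the variance proxy $v$. On the diagonal, $\norm{\E[\norm{\bY_{i+m}}_2^2\,\bY_i\bY_i^\T]}\leq\sup_{\bu}\E[\norm{\bY_{i+m}}_2^2(\bu^\T\bY_i)^2]\lesssim\kappa_*^2\kappa_1^2$ by Cauchy--Schwarz on the radial and directional pieces, giving $(n-m)\kappa_1^4 r_*$; the within- and cross-block covariances $\E[\Xb_i\Xb_{i'}]$ with $i\neq i'$ decay geometrically by the coupling of \ref{A2}--\ref{A3} and contribute the same order. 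Thus $v\lesssim(n-m)\kappa_1^4 r_*$ and $R\lesssim\kappa_1^2 r_*\,\ell\log(np)$. Dividing by $n-m$ turns $\sqrt{v\log(ep)}$ into $\kappa_1^2\sqrt{r_*\log(ep)/(n-m)}$ and $R\log(ep)$ into $\kappa_1^2 r_*\log(ep)(\log np)^3/(n-m)$, the three logarithmic powers collecting the sub-exponential truncation level, the block length $\ell$, and a further $\log(np)$ from summing the coupling errors across the $\asymp(n-m)/\ell$ blocks. This establishes the first display.

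Finally, the second display follows by sharpening the two scalar inputs under \ref{A4} and second-order stationarity. Since the entire argument touches $\kappa_1,\kappa_*$ only through $\sup_{\bu}\norm{\bu^\T\bY_t}_{\psi_2}^2\le\kappa_1^2$ and $\norm{\,\norm{\bY_t}_2^2\,}_{\psi_1}\le\kappa_*^2$, it suffices to note that \ref{A4} upgrades these to $\sup_{\bu}\norm{\bu^\T\bY_t}_{\psi_2}^2\le c\sup_{\bu}\bu^\T\bSigma_0\bu=c\norm{\bSigma_0}$ and, averaging \ref{A4} over random signs, $\norm{\,\norm{\bY_t}_2^2\,}_{\psi_1}\le c\,\E_{\bv}\,\bv^\T\bSigma_0\bv=c\,\tr(\bSigma_0)$. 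Re-running the proof with $\kappa_1^2\mapsto c\norm{\bSigma_0}$ and $\kappa_*^2\mapsto c\,\tr(\bSigma_0)$ replaces $\kappa_1^2$ by $\norm{\bSigma_0}$ and $r_*=\kappa_*^2/\kappa_1^2$ by $\tr(\bSigma_0)/\norm{\bSigma_0}=r(\bSigma_0)$, and absorbing $c$ into $C'$ yields the claim. I expect the main obstacle to be the variance step: proving a matrix Bernstein bound that is simultaneously sharp in the effective rank $r_*$ and valid under the coupling hypotheses requires showing the off-diagonal second-moment matrices are dominated by the diagonal, which is precisely where the geometric decay in \ref{A2}--\ref{A3} must be exploited, all while keeping the sub-exponential (linear) term at the stated logarithmic powers.
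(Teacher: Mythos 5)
Your overall route is genuinely different from the paper's: you aim directly at the expectation bound via truncation, blocking, coupling to independent blocks, and the independent-case matrix Bernstein inequality, whereas the paper first proves a full Bernstein-type tail inequality for $\tau$-mixing matrices (Theorem \ref{thm:bern}, via a Cantor-like multiscale decoupling) and then integrates the resulting tail bound (Proposition \ref{prop:tailbd}). Your treatment of the truncation level, the variance proxy, the dilation for $m\geq 1$, and the reduction of the second display to the first via \ref{A4} (which gives $\kappa_1^2\asymp\norm{\bSigma_0}$ and $\kappa_*^2\asymp\tr(\bSigma_0)$, exactly as in the paper) are all sound in outline. But the central blocking/coupling step, as you state it, fails.

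The gap is this: you partition $\{1,\dots,n-m\}$ into \emph{consecutive} blocks of length $\ell\asymp\log(np)$ and claim the per-block replacement error carries the factors $\exp\{-\gamma_2\ell\}$, $\exp\{-\gamma_4\ell\}$. It does not. Assumptions \ref{A2}--\ref{A3} produce a copy of $\{\bY_t\}_{t>j}$ independent of $\sigma(\{\bY_t\}_{t\leq j})$ whose error at position $k$ is $\gamma_1\kappa_1\exp\{-\gamma_2(k-j-1)\}$: the exponent is the distance to the conditioning $\sigma$-field, not the block length. For a block starting at $j+1$, its first elements sit at distance $0,1,2,\dots$ from the past, so the per-block error is of order $\gamma_1\kappa_1\kappa_*$ (dominated by the boundary terms), not of order $e^{-\gamma_2\ell}$. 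Summing over the $\asymp n/\ell$ blocks and normalizing by $n-m$, the coupling remainder is of order $\gamma_1\kappa_1\kappa_*/\ell\asymp\kappa_1^2 r_*/\log(np)$, which decays only logarithmically in $n$ and therefore swamps both terms of \eqref{eq:main}; your claim that it is $O((np)^{-1})$ is incorrect for this scheme. The standard repair is the alternating-block device: split the indices into odd and even blocks of length $\ell$ and bound each parity class separately; same-parity blocks are then separated by gaps of length $\ell$, so every element is at distance at least $\ell$ from the relevant past and the per-block error genuinely is $O(e^{-\gamma_2\ell})=O((np)^{-c})$. Even with gaps, a second point needs care: \ref{A2}--\ref{A3} only furnish a copy independent of the \emph{original} past, while you need the coupled blocks to be \emph{mutually} independent, i.e., each new copy must be independent of the previously constructed copies (which are measurable with respect to the original variables together with auxiliary randomizations). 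This requires an inductive application of the Dedecker--Prieur coupling lemma (the paper's Lemma \ref{lem:dedecker}); it is precisely the content of the paper's decoupling Lemma \ref{decoup}. With these two repairs, and a proof of the within-block covariance decay for the \emph{truncated} matrices (which you assert but which is the delicate content of Lemma \ref{lem:nu}, where the truncation factors must be handled separately), your expectation-level argument would go through, and would in fact yield $(\log np)^2$ in place of the paper's $(\log np)^3$.
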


We first comment on the temporal correlatedness conditions, Assumptions \ref{A2} and \ref{A3}. We note that they correspond exactly to the $\delta$-measure of dependence  introduced in Chapter 3 of \cite{dedecker2007book}, for the sequence $\{\bY_t\}_{t\in\Z}$ and $\{\bu^\T\bY_t\}_{t\in\Z}$ respectively.  In addition, as will be seen soon, our measure of dependence is also very related to the $\tau$-measure introduced in \cite{dedecker2004coupling}. In particular, ours is usually stronger than, but as $\epsilon\to0$,  reduces to the $\tau$-measure. Lastly, our conditions are also quite connected to the functional dependence measure in \cite{wu2005nonlinear}, on which many moment inequalities in real space have been established (cf. \cite{liu2013probability} and \cite{wu2016performance}). However, it is still unclear if a similar matrix Bernstein inequality could be developed under Weibiao Wu's functional dependence condition. %On the other hand, we note that the decaying rate $O\{\exp(-\gamma k)\}$ in Assumptions \ref{A3} and \ref{A4} is technically ready to be weakened to $O\{\exp(-\gamma k^a)\}$ for some universal constant $a\in(0,1]$, but we would love to keep the current version for presentation clearness. 

Secondly, we note that one is ready to verify that Inequality \eqref{eq:main} gives the exact control of the deviation from the mean. Actually, Inequality \eqref{eq:main} is nearly a strict extension of the results in Lounici \citep{lounici2014high} and Bunea and Xiao \citep{bunea2015sample} to weak data dependence up to some logarithmic terms. This extension is achieved by applying Theorem \ref{thm:bern}, a concentration inequality for a sequence of weakly dependent random matrices. Theorem \ref{thm:bern} is an extension of the Bernstein-type inequality for real-valued weakly dependent random variables derived in \cite{merlevede2011bernstein} to dependent random matrices, and is a slight extension of the Bernstein-type inequality for a sequence of $\beta$-mixing random matrices derived in \cite{banna2016bernstein}. In some applications, especially those in high dimensions, verifying the weak dependence condition in Theorem \ref{thm:bern} is more straightforward than verifying the $\beta$-mixing condition in Theorem 1 in \cite{banna2016bernstein}. The details of the weak dependence condition will be introduced in Section \ref{subsec:thm2.1}, and Theorem \ref{thm:bern} will be proved in the Appendix.

Admittedly, it is still unclear if Inequality \eqref{eq:main} could be further improved under the given conditions. Recently, in a remarkable series of papers \citep{koltchinskii2014concentration,koltchinskii2017new,koltchinskii2017normal}, Koltchinskii and Lounici showed that, for subgaussian independent data, the extra multiplicative $p$ term on the righthand side of Inequality \eqref{eq:main} could be further removed. The proof rests on Talagrand's majorizing measures \citep{talagrand2014upper} and a corresponding maximal inequality due to Mendelson \citep{mendelson2010empirical}. %However, even under product measures, it is still unclear if the same observation could extend to heavy-tailed data, which are the focus of this manuscript. 
In the most general case, to the authors' knowledge, it is still unknown if Talagrand's approach could be extent to weakly dependent data, although we conjecture that, under stronger temporal dependence (e.g., geometrically $\phi$-mixing) conditions, it is possible to recover Koltchinskii and Lounici's result without resorting to the matrix Bernstein inequality in the proof of Theorem \ref{thm:subgaummb}. 

Nevertheless, we make a first step towards eliminating these logarithmic terms via the following theorem. It shows, when assuming a Gaussian sequence is observed, one could further tighten the upper bound in Inequality \eqref{eq:main} by removing all logarithm factors. The obtained bound is thus tight in view of Theorem 2 in \cite{lounici2014high} and Theorem 4 in \cite{koltchinskii2014concentration}.

\begin{theorem}[Proof in Section \ref{subsec:thm2.2}]\label{thm: gaummb}
	Let $\{\bY_t\}_{t \in \Z}$ be a stationary mean-zero Gaussian sequence that satisfies Assumptions \ref{A2}-\ref{A3} with $\gamma_1 = O(\sqrt{r(\bSigma_0)})$, $\gamma_3 = O(1)$, and $\epsilon >1$. Then, for any integer $n \geq 2$ and $0\leq m \leq n-1$,
	$$ \E\norm{\hat{\mathbf{\Sigma}}_m- \mathbf{\Sigma}_m} \leq C\norm{\mathbf{\Sigma}_0}\Big(\sqrt{\frac{r(\bSigma_0)}{n-m}} + \frac{r(\bSigma_0)}{n-m} \Big) $$
	for some constant $C>0$ only depending on $ \epsilon,m, \gamma_2, \gamma_4$.
\end{theorem}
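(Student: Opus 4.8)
The plan is to exploit the exact Gaussianity to replace the matrix Bernstein machinery behind Theorem \ref{thm:subgaummb} by a Gaussian generic-chaining argument in the spirit of Koltchinskii and Lounici \citep{koltchinskii2014concentration}, using the coupling Assumptions \ref{A2}--\ref{A3} only to control the temporal cross-covariances. First I would record that, since $\{\bY_t\}$ is stationary and Gaussian, the stacked vector $(\bY_1^\T,\dots,\bY_n^\T)^\T=\mathcal L\bg$ is a linear image of a standard Gaussian $\bg\in\R^N$, so that, writing $\theta=(\bu,\bv)$, $\Theta=\mathbb S^{p-1}\times\mathbb S^{p-1}$ and letting $\mathcal L_i$ be the $i$-th $p\times N$ block of $\mathcal L$,
\[
\norm{\hat\bSigma_m-\bSigma_m}=\sup_{\theta\in\Theta}\Big(\bg^\T A_\theta\bg-\tr(A_\theta)\Big),\qquad A_\theta:=\frac{1}{2(n-m)}\sum_{i=1}^{n-m}\big(\mathcal L_i^\T\bu\bv^\T\mathcal L_{i+m}+\mathcal L_{i+m}^\T\bv\bu^\T\mathcal L_i\big).
\]
Thus $\norm{\hat\bSigma_m-\bSigma_m}$ is the supremum of a centered second-order Gaussian chaos indexed by $\Theta$.

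Because the conclusion is an \emph{expectation} bound, no separate concentration step is required: it suffices to control $\E\sup_{\theta}(\bg^\T A_\theta\bg-\tr(A_\theta))$, which I would do by Talagrand's generic-chaining bound for suprema of chaos processes (after the standard decoupling that renders the chaos conditionally Gaussian). That bound is driven by the majorizing-measure functional of the family $\{A_\theta\}_{\theta\in\Theta}$ in the operator-norm metric $\theta\mapsto\norm{A_\theta-A_{\theta'}}$ together with the Hilbert–Schmidt and operator diameters $\sup_\theta\|A_\theta\|_{\mathrm F}$ and $\sup_\theta\norm{A_\theta}$. Evaluating these over the sphere exactly as in the independent analysis, the Hilbert–Schmidt/sub-Gaussian part yields a contribution of order $\norm{\bSigma_0}\sqrt{r(\bSigma_0)/(n-m)}$ and the operator/sub-exponential part a contribution of order $\norm{\bSigma_0}\,r(\bSigma_0)/(n-m)$, the effective rank $r(\bSigma_0)=\tr(\bSigma_0)/\norm{\bSigma_0}$ entering through $\tr(\bSigma_0)$.

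The only place dependence enters is in evaluating these diameters and metrics: unlike the independent case the cross-block terms at different $i$ do not decouple, so $\|A_\theta\|_{\mathrm F}^2$ and $\norm{A_\theta}$ involve the lag covariances $\cov\big((\bu^\T\bY_i)(\bv^\T\bY_{i+m}),(\bu^\T\bY_j)(\bv^\T\bY_{j+m})\big)$ summed over $j$. The crux is to show these sums contribute only a multiplicative constant rather than a factor growing with $n-m$. Here Gaussianity is essential: by the Wick/Isserlis identity these fourth-order quantities reduce to products of entries of the lag covariances $\E\bY_i\bY_{i+\ell}^\T$, and Assumptions \ref{A2}--\ref{A3} (via the coupled sequence $\{\tilde\bY_t\}$) force $\norm{\E\bY_i\bY_{i+\ell}^\T}$ to decay geometrically in $|\ell|$ at rates governed by $\gamma_2,\gamma_4$, with $\epsilon>1$ ensuring that the $L(1+\epsilon)$ coupling moments dominate these second moments so that the geometric series is summable. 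Consequently the two chaining contributions remain of the orders stated above, up to constants depending only on $\epsilon,m,\gamma_2,\gamma_4$, which is what removes both the $\log(ep)$ and the $(\log np)^3$ factors of Theorem \ref{thm:subgaummb}.

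I expect the main obstacle to be making this chaining rigorous for the \emph{dependent} chaos and proving that the temporal dependence inflates the relevant metrics by at most a constant factor; concretely, one must verify that every increment $\bg^\T(A_\theta-A_{\theta'})\bg-\tr(A_\theta-A_{\theta'})$ obeys the same two-regime (sub-Gaussian then sub-exponential) tail as in the independent case with the lag sums absorbed into the constant. This is precisely the step in which Gaussianity is indispensable, since it linearizes the coupling bounds into geometric covariance decay, and in which the scalings $\gamma_1=O(\sqrt{r(\bSigma_0)})$, $\gamma_3=O(1)$ and $\epsilon>1$ are used to keep the coupling remainders strictly below the leading order.
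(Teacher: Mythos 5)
Your route is genuinely different from the paper's. The paper never chains: it uses van Handel's decoupling $\E\norm{\hat{\bSigma}_0-\bSigma_0}\le \tfrac{2}{n}\E\norm{\Yb\tilde{\Yb}^{\T}}$ followed by two Slepian--Fernique comparisons (Lemmas \ref{lem:gau1} and \ref{lem:gau2}), which yield Proposition \ref{prop:gau}: a bound for general stationary Gaussian sequences in terms of $\tr\big(\bSigma_0+2\sum_{d}\tilde{\bSigma}_d\big)$, $\norm{\bSigma_0}+2\sum_{d}\norm{\bSigma_d}$ and $\norm{\bSigma_0}_*+2\sum_{d}\norm{\bSigma_d}_*$. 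The coupling Assumptions \ref{A2}--\ref{A3} are then used only to prove geometric decay of $\norm{\bSigma_d}$ (from \ref{A3}) and of the nuclear norms $\norm{\bSigma_d}_*$ (from \ref{A2}, using $\epsilon>1$), and the case $m>0$ is reduced to $m=0$ by stacking $(\bY_i^\T\ \bY_{i+m}^\T)^\T$ and noting $\norm{\bar{\bSigma}_0}\le 2\norm{\bSigma_0}$. Your chaos-chaining plan is viable in principle for $m=0$: since $\bg$ has i.i.d.\ standard Gaussian entries, all dependence sits inside the index family $\{A_\theta\}$ and the chaining bound for positive-semidefinite Gaussian chaos still applies; had you executed it, you would have been led to exactly the same quantities the paper isolates.

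As written, however, there are concrete gaps. First, the decisive step --- bounding Talagrand's $\gamma_2$-functional of $\{A_\theta\}$ in the operator-norm metric under dependence --- is asserted to work ``exactly as in the independent analysis'' but never carried out; you yourself flag it as the main obstacle, and it is where the theorem lives. Under dependence this metric is the ellipsoidal metric induced by $\bSigma_0+2\sum_{d\ge1}\tilde{\bSigma}_d$ (with $\tilde{\bSigma}_d$ the positive-semidefinite symmetrization of $\bSigma_d$), and converting its chaining functional into $\sqrt{\tr(\bSigma_0)/(n-m)}$ requires the majorizing measure theorem \emph{plus} control of the trace of that ellipsoid, i.e.\ summability of the nuclear norms $\norm{\bSigma_d}_*$. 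Second, and this is the substantive mathematical gap, your sketch extracts from the couplings only spectral-norm decay, $\norm{\E\bY_i\bY_{i+\ell}^\T}\le C\norm{\bSigma_0}e^{-c\ell}$, which follows from \ref{A3} alone. That is not enough: spectral decay only gives $\tr\big(\bSigma_0+2\sum_{d}\tilde{\bSigma}_d\big)\le \tr(\bSigma_0)+Cp\norm{\bSigma_0}$, which replaces $r(\bSigma_0)$ by a quantity of order $p$ and destroys the effective-rank character of the bound. The missing ingredient is $\norm{\bSigma_d}_*\le C\tr(\bSigma_0)e^{-\gamma_2(d-1)}$, and this is precisely where Assumption \ref{A2} (coupling of the whole vector in $\norm{\cdot}_2$), the scaling $\gamma_1=O(\sqrt{r(\bSigma_0)})$, and the hypothesis $\epsilon>1$ enter: $\epsilon>1$ makes the conjugate exponent $(1+\epsilon)/\epsilon<2$, so the H\"older factor $\norm{\norm{\bY_1}_2}_{L((1+\epsilon)/\epsilon)}$ is controlled by $\sqrt{\tr(\bSigma_0)}$. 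Your proposal never uses \ref{A2} in this trace-level role. Third, for $m>0$ your $A_\theta$ is indefinite, so the chaos bound you invoke (stated for suprema of $\norm{A\bg}_2^2$-type forms) does not apply directly; general indefinite chaos bounds bring in a $\gamma_1$-functional and do not obviously yield the clean two-term answer, whereas the paper sidesteps the issue entirely with the stacking trick.
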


In a related track of studies, \cite{bai1993limit}, \cite{srivastava2013covariance}, \cite{mendelson2014singular}, and \cite{tikhomirov2017sample}, among many others, explored the optimal scaling requirement in approximating a large covariance matrix for heavy-tailed  data. For instance, for i.i.d. data and as $\bSigma_0$ is identity, Bai and Yin \citep{bai1993limit} showed that $\norm{\hat\bSigma_0-\bSigma_0}$ will converge to zero in probability as long as $p/n\to 0$ and 4-th moments exist. Some recent developments further strengthen the moment requirement. %Here we emphasize that our results are of completely different flavor, and our approach to the problem is fundamentally different. I
These results cannot be compared to ours. In particular, our analysis is focused on characterizing the role of ``effective rank", a term of strong meanings in statistical implications and a feature that cannot be captured using these alternative procedures. %Our bounds are also sharper if $\kappa_*^2/\kappa_1^2=O(p^a)$ for some $a$ strictly smaller than 1.

\section{Applications}\label{sec:app}

In this section, we examine the validity of Assumptions \ref{A1}-\ref{A4} in Section \ref{sec:thm} under three models, a stable VAR($d$) model, a model proposed by \cite{banna2016bernstein}, and an ARCH-type model. One shall be aware of examples that are of VAR($d$) or ARCH-type structures but are not $\alpha$- or $\beta$-mixing %. One well-known example is the real-valued AR(1) model with Bernoulli innovations, i.e., $Y_t = (1/2)Y_{t-1} + \xi_t$, where $\{\xi_t\}_{t \in \Z}$ is a sequence of i.i.d. Bernoulli random variables with success probability $1/2$. This model is not $\alpha$- or $\beta$-mixing but satisfies the weak dependence condition used in this work
(cf. \cite{andrews1984non}).

 %Theorems \ref{ex1}, \ref{ex2}, and \ref{ex3} verify assumptions of the main theorem for this three models respectively.

We first consider such $\{\bY_t\}_{t \in \Z}$ that is a random sequence generated from VAR($d$) model, i.e., 
$$\bY_t = \Ab_1\bY_{t-1}+ \dots+ \Ab_d\bY_{t-d}+ \bE_t,$$
where $\{\bE_t\}_{t \in \Z}$ is a sequence of independent vectors such that for all $t \in \Z$ and $\bu \in \R^p$, $\norm{\bu^\T\bE_t}_{\psi_2}  \leq  c'\norm{\bu^\T\bE_t}_{L(2)}$ for some universal constant $c'>0$. In addition, assume $\sup_{t \in \Z}\sup_{\bu \in \mathbb{S}^{p-1}} \norm{\bu^\T\bE_t}_{\psi_2} <D_1$ for some universal positive constant $D_1<\infty$,  $\norm{\Ab_k} \leq a_k <1$ for all $1\leq k\leq d$, and $\sum_{k = 1}^d a_k <1$, where $\{a_k\}_{k = 1}^d, d$ are some universal constants. 

Under these conditions, we have the following theorem.
\begin{theorem}[Proof in Section \ref{subsec:pfsec3}]\label{ex1}
The above $\{\bY_t\}_{t \in \Z}$ satisfies Assumptions \ref{A1}-\ref{A4} with 
\[
\gamma_1= C(\kappa_*/\kappa_1)(\norm{\bar{\Ab}}/\rho_1)^K, \gamma_2=\log(\rho_1^{-1}), \gamma_3= C'd(\norm{\bar{\Ab}}/\rho_1)^K, \gamma_4 = \log(\rho_1^{-1}).
\]
Here we denote
	\[
\bar{\Ab} := \begin{bmatrix}
a_1      & a_2 & \dots & a_{d-1} & a_d \\
1      & 0 & \dots& 0 & 0\\
\hdotsfor{5} \\
0     & 0& \dots& 1& 0
\end{bmatrix},
\]
$\rho_1$ is a universal constant such that $\rho(\bar{\Ab}) < \rho_1 <1$ whose existence is guaranteed by the assumption that $\sum_{k = 1}^d a_k <1$ (cf. Lemma \ref{lem:rhoA} in Section \ref{sec:proof}), $K$ is some constant only depending on $\rho_1$, and $C,C'>0$ are some constants only depending on $\epsilon$.
\end{theorem}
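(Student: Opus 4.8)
The plan is to reduce the VAR($d$) recursion to a first-order companion form and then read all four assumptions off the resulting causal representation. First I would stack the last $d$ lags into $\mathbf{Z}_t := (\bY_t^\T,\bY_{t-1}^\T,\ldots,\bY_{t-d+1}^\T)^\T\in\R^{pd}$ and write $\mathbf{Z}_t=\mathcal{A}\mathbf{Z}_{t-1}+\mathcal{F}_t$, where $\mathcal{A}$ is the $pd\times pd$ block companion matrix carrying $\Ab_1,\ldots,\Ab_d$ in its first block-row and identity blocks on the subdiagonal, and $\mathcal{F}_t:=(\bE_t^\T,\mathbf{0}^\T,\ldots,\mathbf{0}^\T)^\T$. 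The single quantitative input driving everything is a geometric bound on $\norm{\mathcal{A}^\ell}$, which I would obtain in two moves. (i) A block-domination step giving $\norm{\mathcal{A}^\ell}\leq\norm{\bar{\Ab}^\ell}$: each block of $\mathcal{A}^\ell$ is a matrix polynomial in the $\Ab_k$ whose operator norm, by submultiplicativity and $\norm{\Ab_k}\leq a_k$, is at most the corresponding entry of $\bar{\Ab}^\ell$, and the operator norm of a block matrix is controlled by that of its matrix of block-norms. (ii) A Gelfand step: since $\rho(\bar{\Ab})<\rho_1$ by Lemma \ref{lem:rhoA} and $\norm{\bar{\Ab}^\ell}^{1/\ell}\to\rho(\bar{\Ab})$, there is a $K$ depending only on $\rho_1$ with $\norm{\bar{\Ab}^K}\leq\rho_1^K$; splitting $\ell=qK+s$ then yields $\norm{\bar{\Ab}^\ell}\leq(\norm{\bar{\Ab}}/\rho_1)^K\rho_1^\ell$. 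This produces exactly the prefactor $(\norm{\bar{\Ab}}/\rho_1)^K$ and the rate $\rho_1=\exp\{-\gamma_2\}=\exp\{-\gamma_4\}$ in the statement.

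Next I would verify \ref{A1} and \ref{A4} from the moving-average expansion $\bY_t=\sum_{i\geq0}\Psi_i\bE_{t-i}$, where $\Psi_i$ is the top-left $p\times p$ block of $\mathcal{A}^i$, so $\norm{\Psi_i}\leq(\norm{\bar{\Ab}}/\rho_1)^K\rho_1^i$. Because the $\bE_{t-i}$ are independent and the $\psi_2$-norm of a sum of independent centered subgaussians is controlled by the root-sum-of-squares of the summands' $\psi_2$-norms, I get $\norm{\bu^\T\bY_t}_{\psi_2}^2\lesssim D_1^2\norm{\bu}_2^2\sum_i\norm{\Psi_i}^2$, and the geometric sum is finite; taking suprema over $\mathbb{S}^{p-1}$ and over $\bar{\mathbb{S}}^{p-1}$ gives $\kappa_1,\kappa_*<\infty$, which is \ref{A1}. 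For \ref{A4} I would propagate the innovation-level comparison: the same independence gives $\norm{\bu^\T\bY_t}_{\psi_2}^2\lesssim\sum_i\norm{\bu^\T\Psi_i\bE_{t-i}}_{\psi_2}^2\leq (c')^2\sum_i\norm{\bu^\T\Psi_i\bE_{t-i}}_{L(2)}^2=(c')^2\,\E(\bu^\T\bY_t)^2$, the last equality because the independent centered terms have second moments that add.

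The heart of the argument is the coupling for \ref{A2} and \ref{A3}. Fixing $j$, I would take an independent copy $\tilde{\mathbf{Z}}_j$ of the stationary state $\mathbf{Z}_j$, realizable through an independent copy of $\{\bE_t\}_{t\leq j}$, so that $\tilde{\mathbf{Z}}_j$ is independent of $\sigma(\{\bY_t\}_{t\leq j})$ and identically distributed, and then drive it forward with the same future innovations, $\tilde{\mathbf{Z}}_k=\mathcal{A}\tilde{\mathbf{Z}}_{k-1}+\mathcal{F}_k$ for $k>j$. The future innovations cancel in the difference, leaving the clean identity $\mathbf{Z}_k-\tilde{\mathbf{Z}}_k=\mathcal{A}^{k-j}(\mathbf{Z}_j-\tilde{\mathbf{Z}}_j)$, whose first block is $\bY_k-\tilde{\bY}_k$. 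For \ref{A2} I would bound $\norm{\bY_k-\tilde{\bY}_k}_2\leq\norm{\mathcal{A}^{k-j}}\norm{\mathbf{Z}_j-\tilde{\mathbf{Z}}_j}_2$ and take $L(1+\epsilon)$ norms; since $\norm{\mathbf{Z}_j-\tilde{\mathbf{Z}}_j}_2\leq\norm{\mathbf{Z}_j}_2+\norm{\tilde{\mathbf{Z}}_j}_2$ and $\norm{\norm{\bY_t}_2}_{L(1+\epsilon)}$ is of order $\kappa_*$ (via $\E\norm{\bY_t}_2^2=\tr\cov(\bY_t)\asymp\kappa_*^2$ and subgaussian moment equivalence), the geometric factor produces $\gamma_1\asymp(\kappa_*/\kappa_1)(\norm{\bar{\Ab}}/\rho_1)^K$ once the $\kappa_1$ on the right-hand side is divided out. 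For \ref{A3} I would project first: writing $\mathbf{U}=(\bu^\T,\mathbf{0}^\T,\ldots,\mathbf{0}^\T)^\T$ gives $(\bY_k-\tilde{\bY}_k)^\T\bu=((\mathcal{A}^{k-j})^\T\mathbf{U})^\T(\mathbf{Z}_j-\tilde{\mathbf{Z}}_j)$, a sum over the $d$ blocks of one-dimensional subgaussians $(\mathbf{w}^{(l)})^\T(\bY_{j-l+1}-\tilde{\bY}_{j-l+1})$ of scale $\lesssim\norm{\mathbf{w}^{(l)}}_2\kappa_1$; the triangle inequality over the $d$ blocks contributes the factor $d$ and $\sum_l\norm{\mathbf{w}^{(l)}}_2\lesssim\norm{\mathcal{A}^{k-j}}$ supplies the rate, giving $\gamma_3\asymp d(\norm{\bar{\Ab}}/\rho_1)^K$ and $\gamma_4=\log(\rho_1^{-1})$.

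The step I expect to demand the most care is the block-to-scalar domination together with the matching of prefactors: one must check that the combinatorics of the block companion powers is genuinely dominated, entry by entry, by the scalar companion powers, and then track the scale constants so that the innovation level $D_1$ and the effective-rank ratio $\kappa_*/\kappa_1$ surface in precisely the claimed positions of $\gamma_1$ and $\gamma_3$. The probabilistic inputs---independence of innovations, the subgaussian moment equivalences, and Minkowski's inequality for the $L(1+\epsilon)$ norm---are standard; the bookkeeping that funnels them into the four stated constants is where the work lies.
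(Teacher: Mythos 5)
Your proposal is correct, and its skeleton coincides with the paper's: the identical coupling (an independent copy of the pre-$j$ initial segment, pushed forward with the \emph{same} future innovations so that they cancel in the difference), the identical use of Gelfand's formula to produce the prefactor $(\norm{\bar{\Ab}}/\rho_1)^K$ and rate $\rho_1^{k-j}$, and essentially the same independence-based subgaussian argument for \ref{A1} and \ref{A4}. The genuine difference is how the contraction is propagated in \ref{A2}--\ref{A3}. The paper takes $L(1+\epsilon)$-norms \emph{before} iterating: setting $\phi_t := \norm{\norm{\bY_t-\tilde{\bY}_t}_2}_{L(1+\epsilon)}$, Minkowski's inequality and $\norm{\Ab_k}\leq a_k$ give the scalar recursion $\phi_t\leq\sum_{k=1}^d a_k\phi_{t-k}$, which is then iterated through the $d\times d$ companion matrix $\bar{\Ab}$ acting on $(\phi_t,\dots,\phi_{t-d+1})^\T$; no $pd\times pd$ object is ever needed for this step. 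You iterate \emph{before} taking norms, via the exact state-space identity $\mathbf{Z}_k-\tilde{\mathbf{Z}}_k=\mathcal{A}^{k-j}(\mathbf{Z}_j-\tilde{\mathbf{Z}}_j)$, and then must pay for it with the block-domination lemma $\norm{\mathcal{A}^\ell}\leq\norm{\bar{\Ab}^\ell}$ --- which does hold: the blocks of $\mathcal{A}^\ell$ are path-sums of products of the $\Ab_k$ and $\Ib_p$, so their operator norms are dominated entrywise by the entries of $\bar{\Ab}^\ell$, and the spectral norm of a block matrix is at most that of its matrix of block norms, which is monotone in the entrywise order on nonnegative matrices. The two routes give the same bounds; yours buys a cleaner algebraic identity at the cost of this extra lemma, while the paper avoids the lemma at the cost of working with norm recursions throughout. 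One bookkeeping caveat: summing over the $d$ blocks of $\mathbf{Z}_j-\tilde{\mathbf{Z}}_j$ necessarily puts a $d$-dependent factor into your $\gamma_1$ as well, not only into $\gamma_3$; this matches the paper's own proof (which derives $\gamma_1 = Cd(\kappa_*/\kappa_1)(\norm{\bar{\Ab}}/\rho_1)^K$ even though the theorem statement absorbs the $d$), and since $d$ is a universal constant in this setting the discrepancy is immaterial.
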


We secondly consider the following time series generation scheme whose corresponding matrix version has been considered by Banna, Merlev\`{e}de, and Youssef \citep{banna2016bernstein}. In detail, let $\{\bY_t\}_{t \in \Z}$ be a random sequence generated by
	$$\bY_t = W_t\bE_t,$$
	where $\{\bE_t\}_{t \in \Z}$ is a sequence of independent random vectors independent of $\{W_t\}_{t \in \Z}$ such that for all $t\in \Z$ and $\bu\in \R^p$,
	$\norm{\bu^\T\bE_t}_{\psi_2}  \leq c' \norm{\bu^\T\bE_t}_{L(2)}$ for some universal constant $c'>0$. In addition, we assume
	 \[
	 \sup_{t \in \Z}\sup_{\bu \in \mathbb{S}^{p-1}} \norm{\bu^\T\bE_t}_{\psi_2} \leq  \kappa_1'~~~{\rm and}~~~  \sup_{t \in \Z}\sup_{\bv \in \bar{\mathbb{S}}^{p-1}}\norm{\bv^\T\bE_t}_{\psi_2}  \leq \kappa_*'  
	 \]
	for some constants $0 < \kappa_1',  \kappa_*' < \infty$, $\{W_t\}_{t \in \Z}$ is a sequence of uniformly bounded $\tau$-mixing random variables such that $\max_{t\in \Z} |W_t| \leq \kappa_W$, and 
	$$\tau(k; \{W_t\}_{t \in \Z}, |\cdot|) \leq\kappa_W\gamma_5\exp\{-\gamma_6(k-1)\}$$
	for some constants $0 < \gamma_5, \gamma_6, \kappa_W <\infty$ (see, Appendix Section \ref{sec:tau} for a detailed introduction to the $\tau$-mixing random variables).

\begin{theorem}[Proof in Section \ref{subsec:pfsec3}]\label{ex2}
 The above $\{\bY_t\}_{t \in \Z}$ satisfies Assumptions \ref{A1}-\ref{A4} with 
 \[
 \gamma_1=C\kappa_*' \kappa_W \gamma_5^{\frac{1}{1+\epsilon}}/\kappa_1, \gamma_2=\gamma_6/(1+\epsilon), \gamma_3=C'\kappa_1' \kappa_W \gamma_5^{\frac{1}{1+\epsilon}}/\kappa_1, \gamma_4 = \gamma_6/(1+\epsilon) 
 \]
 for some constants $C, C' >0$ only depending on $\epsilon$.
\end{theorem}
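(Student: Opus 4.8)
The plan is to verify the four assumptions one at a time, treating \ref{A1} and \ref{A4} as direct consequences of the boundedness of $\{W_t\}$ and the subgaussianity of $\{\bE_t\}$, and reserving the real work for the coupling conditions \ref{A2} and \ref{A3}. For \ref{A1}, since $|W_t|\leq\kappa_W$ almost surely, the almost-sure bound $|W_t\bu^\T\bE_t|\leq\kappa_W|\bu^\T\bE_t|$ together with monotonicity of the $\psi_2$-norm gives $\norm{\bu^\T\bY_t}_{\psi_2}\leq\kappa_W\norm{\bu^\T\bE_t}_{\psi_2}$ for every $\bu$, so that $\kappa_1\leq\kappa_W\kappa_1'$ and $\kappa_*\leq\kappa_W\kappa_*'$ are both finite. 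For \ref{A4}, the same bound yields $\norm{\bu^\T\bY_t}_{\psi_2}^2\leq\kappa_W^2(c')^2\E(\bu^\T\bE_t)^2$, while independence of $W_t$ and $\bE_t$ gives $\E(\bu^\T\bY_t)^2=\E[W_t^2]\,\E(\bu^\T\bE_t)^2$; thus \ref{A4} holds with $c=\kappa_W^2(c')^2/\inf_t\E[W_t^2]$, under the mild non-degeneracy $\inf_t\E[W_t^2]>0$.

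The heart of the argument is the construction of the coupling required by \ref{A2} and \ref{A3}. Fix an integer $j$. Since $\{W_t\}_{t\in\Z}$ is $\tau$-mixing, the coupling characterization of the $\tau$-coefficient (see Appendix Section \ref{sec:tau}) produces, on a suitably enriched probability space, a sequence $\{\tilde W_t\}_{t>j}$ that is independent of $\sigma(\{W_s\}_{s\leq j})$, identically distributed as $\{W_t\}_{t>j}$, and satisfies $\E|W_k-\tilde W_k|\leq\tau(k-j;\{W_t\}_{t\in\Z},|\cdot|)\leq\kappa_W\gamma_5\exp\{-\gamma_6(k-j-1)\}$. I would then set $\tilde\bY_t:=\tilde W_t\bE_t$ for $t>j$, reusing the original innovations $\bE_t$. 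Because $\{\bE_t\}$ is independent of both $\{W_t\}$ and $\{\tilde W_t\}$, and $\{\tilde W_t\}_{t>j}$ is independent of $\sigma(\{W_s\}_{s\leq j})$, the pair $(\{\tilde W_t\}_{t>j},\{\bE_t\}_{t>j})$ is independent of $\sigma(\{\bY_s\}_{s\leq j})\subseteq\sigma(\{W_s,\bE_s\}_{s\leq j})$; hence $\{\tilde\bY_t\}_{t>j}$ is independent of $\sigma(\{\bY_s\}_{s\leq j})$ and, since $\{\tilde W_t\}_{t>j}\stackrel{d}{=}\{W_t\}_{t>j}$ while the $\bE_t$ are untouched, it is distributed as $\{\bY_t\}_{t>j}$, as required.

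With this coupling the difference factorizes as $\bY_k-\tilde\bY_k=(W_k-\tilde W_k)\bE_k$, and independence of the scalar factor from $\bE_k$ lets me split every moment into a product. For \ref{A2}, write $\norm{\norm{\bY_k-\tilde\bY_k}_2}_{L(1+\epsilon)}=\norm{W_k-\tilde W_k}_{L(1+\epsilon)}\cdot\norm{\norm{\bE_k}_2}_{L(1+\epsilon)}$. The first factor I would control by the pointwise bound $|W_k-\tilde W_k|^{1+\epsilon}\leq(2\kappa_W)^{\epsilon}|W_k-\tilde W_k|$, which upon taking expectations turns the $L(1)$ estimate into $\norm{W_k-\tilde W_k}_{L(1+\epsilon)}\leq C_\epsilon\kappa_W\gamma_5^{1/(1+\epsilon)}\exp\{-[\gamma_6/(1+\epsilon)](k-j-1)\}$, already explaining the exponents $\gamma_2=\gamma_4=\gamma_6/(1+\epsilon)$ and the factor $\gamma_5^{1/(1+\epsilon)}$. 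For the second factor, the crucial step is a Rademacher symmetrization: writing $\norm{\bE_k}_2^2=\E_{\bv}[(\bv^\T\bE_k)^2]$ over a vector $\bv\in\bar{\mathbb{S}}^{p-1}$ of independent signs and applying Jensen together with the subgaussian moment bound $\norm{\bv^\T\bE_k}_{L(q)}\lesssim\sqrt{q}\,\norm{\bv^\T\bE_k}_{\psi_2}\leq\sqrt{q}\,\kappa_*'$, I obtain $\norm{\norm{\bE_k}_2}_{L(1+\epsilon)}\leq C_\epsilon'\kappa_*'$. Multiplying the two factors gives $\gamma_1\asymp\kappa_*'\kappa_W\gamma_5^{1/(1+\epsilon)}/\kappa_1$. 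Assumption \ref{A3} is identical except that the inner supremum over $\bu\in\mathbb{S}^{p-1}$ replaces $\norm{\bE_k}_2$ by $|\bu^\T\bE_k|$, so no symmetrization is needed and the subgaussian moment bound directly gives $\norm{\bu^\T\bE_k}_{L(1+\epsilon)}\lesssim\kappa_1'$, whence $\gamma_3\asymp\kappa_1'\kappa_W\gamma_5^{1/(1+\epsilon)}/\kappa_1$.

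The main obstacle I anticipate is ensuring that the symmetrization trick delivers the dimension-free factor $\kappa_*'$ rather than the naive $\sqrt{p}\,\kappa_1'$ one would get from bounding $\E\norm{\bE_k}_2^2=\sum_i\E[E_{k,i}^2]$ coordinatewise; obtaining $\kappa_*'$ is exactly what makes $\gamma_1$ scale with the generalized effective rank $r_*=\kappa_*^2/\kappa_1^2$ as demanded by the hypothesis $\gamma_1=O(\sqrt{r_*})$ of Theorem \ref{thm:subgaummb}. A secondary technical point is justifying that the single coupling $\{\tilde W_t\}_{t>j}$ simultaneously serves \ref{A2} and \ref{A3} through the same $\{\tilde\bY_t\}_{t>j}$, which the construction above guarantees since both assumptions are read off the one difference $(W_k-\tilde W_k)\bE_k$.
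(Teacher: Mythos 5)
Your proposal is correct and follows essentially the same route as the paper's proof: couple the scalar sequence $\{W_t\}$ via its $\tau$-mixing property, set $\tilde\bY_t=\tilde W_t\bE_t$ with the original innovations, factor the moments by independence, use the uniform bound $|W_k-\tilde W_k|\leq 2\kappa_W$ to pass from the $L(1)$ coupling bound to $L(1+\epsilon)$ (which produces the exponents $\gamma_5^{1/(1+\epsilon)}$ and $\gamma_6/(1+\epsilon)$), and invoke the sign-vector identity $\norm{\bE_k}_2^2=\E_{\bv}(\bv^\T\bE_k)^2$ to get the dimension-free factor $\kappa_*'$ for \ref{A2} versus $\kappa_1'$ for \ref{A3}. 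Your treatment of \ref{A1} and \ref{A4}, including the explicit non-degeneracy caveat $\inf_t\E W_t^2>0$, matches the paper's argument (which handles the degenerate case separately and divides by $\inf_t\norm{W_t}_{L(2)}$), so there is nothing substantive to add.
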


Lastly, we consider an vector-valued ARCH-model with $\{\bY_t\}_{t \in \Z}$ being a random sequence generated by
	$$\bY_t = \Ab \bY_{t-1}+ H(\bY_{t-1})\bE_t,$$
	where $H:\R^p \rightarrow \R^{p\times p}$ is a matrix-valued function and $\{\bE_t\}_{t \in \Z}$ is a sequence of independent random vectors such that
		$$ \sup_{t \in \Z}\sup_{\bu \in \mathbb{S}^{p-1}} \norm{\bu^\T\bE_t}_{\psi_2} \leq  \kappa_1'~~~{\rm and}~~~ \sup_{t \in \Z}\sup_{\bv \in \bar{\mathbb{S}}^{p-1}}\norm{\bv^\T\bE_t}_{\psi_2}  \leq \kappa_*'  $$
	for some constants $0 < \kappa_1',  \kappa_*' < \infty$. Assume further that $\norm{\Ab} \leq a_1$ and the function $H(\cdot)$ satisfies
	\begin{align*}
	&\sup_{\bu,\bv\in\reals^p}\norm{H(\bu) - H(\bv)}  \leq \frac{a_2}{\kappa_{*}'}\norm{\bu - \bv}_2
	\end{align*}
	for some universal constant $a_1<1, a_2>0$ such that $a_1 + a_2<1$.

\begin{theorem}[Proof in Section \ref{subsec:pfsec3}]\label{ex3}
 If the above $\{\bY_t\}_{t \in \Z}$ satisfies Assumption \ref{A1}, it satisfies Assumptions \ref{A2}-\ref{A3} with 
 \[
 \gamma_1=C\kappa_*/\kappa_1, \gamma_2=-\log(a_1+a_2), \gamma_3=C'\max(\kappa_*\kappa_{1}'/\kappa_1\kappa_{*}', 1), \gamma_4 = \log(a_1+a_2)^{-1}
 \]
  for some constants $C, C'>0$ only depending on $\epsilon$. If we further assume the above $\{\bY_t\}_{t \in \Z}$ to be a stationary sequence and $\sup_{\bu\in\reals^p}\norm{H(\bu)}<D_2$ for some universal constant $D_2 <\infty$, then $\{\bY_t\}_{t \in \Z}$ satisfies Assumption \ref{A1}.
\end{theorem}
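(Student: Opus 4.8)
The plan is to exploit the Markovian structure of the recursion $\bY_t=\Ab\bY_{t-1}+H(\bY_{t-1})\bE_t$ to build an explicit coupling and then propagate a one-step contraction. First I would construct, for each fixed $j$, the coupled sequence by drawing $\tilde\bY_j$ to be an independent copy of $\bY_j$ (independent of $\sigma(\{\bY_t\}_{t\le j})$ and of all innovations) and running $\tilde\bY_t=\Ab\tilde\bY_{t-1}+H(\tilde\bY_{t-1})\bE_t$ for $t>j$ using the \emph{same} innovations $\{\bE_t\}_{t>j}$. Since each $\bE_t$ with $t>j$ is independent of $\sigma(\{\bY_t\}_{t\le j})$ and $\tilde\bY_j$ is a fresh copy, the sequence $\{\tilde\bY_t\}_{t>j}$ is a measurable function of $(\tilde\bY_j,\{\bE_t\}_{t>j})$, hence independent of $\sigma(\{\bY_t\}_{t\le j})$ and identically distributed as $\{\bY_t\}_{t>j}$, as required by \ref{A2}--\ref{A3}. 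Subtracting the two recursions gives the key identity $\bY_k-\tilde\bY_k=\Ab(\bY_{k-1}-\tilde\bY_{k-1})+[H(\bY_{k-1})-H(\tilde\bY_{k-1})]\bE_k$, and the Lipschitz bound on $H$ together with $\norm\Ab\le a_1$ yields the pointwise contraction $\norm{\bY_k-\tilde\bY_k}_2\le\norm{\bY_{k-1}-\tilde\bY_{k-1}}_2\,(a_1+\tfrac{a_2}{\kappa_*'}\norm{\bE_k}_2)$. Two elementary facts will be used repeatedly: the definition of the $\psi_2$-norm gives $\E X^2\le\norm X_{\psi_2}^2$ (since $e^t\ge1+t$), and the Rademacher-averaging identity $\norm\bx_2^2=\E_{\beps}(\beps^\T\bx)^2$ then shows $\E\norm{\bE_t}_2^2\le\kappa_*'^2$ and $\E\norm{\bY_t}_2^2\le\kappa_*^2$.

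For \ref{A2}, write $q:=1+\epsilon$ and $D_k:=\norm{\bY_k-\tilde\bY_k}_2$. Because $\bE_k$ is independent of $(\bY_{k-1},\tilde\bY_{k-1})$, the contraction factorizes under expectation and gives $\norm{D_k}_{L(q)}\le\norm{D_{k-1}}_{L(q)}\,(a_1+\tfrac{a_2}{\kappa_*'}\norm{\norm{\bE_k}_2}_{L(q)})$. For $q\le2$ the inner norm is at most $\norm{\norm{\bE_k}_2}_{L(2)}\le\kappa_*'$, so the per-step factor is at most $a_1+a_2$. Iterating from the boundary value $\norm{D_j}_{L(q)}\le2\sup_t\norm{\norm{\bY_t}_2}_{L(q)}\le2\kappa_*$ produces $\norm{D_k}_{L(q)}\le2\kappa_*(a_1+a_2)^{k-j}$, which is exactly \ref{A2} with $\gamma_1\asymp\kappa_*/\kappa_1$ and $\gamma_2=-\log(a_1+a_2)$.

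The bound for \ref{A3} is the delicate part, since the trivial estimate $\sup_\bu\norm{(\bY_k-\tilde\bY_k)^\T\bu}_{L(q)}\le\norm{D_k}_{L(q)}$ only recovers $\gamma_3\asymp\kappa_*/\kappa_1$ rather than the sharper $\max(\kappa_*\kappa_1'/\kappa_1\kappa_*',1)$. To gain the improvement I would analyze a fixed direction directly: projecting the difference identity onto $\bu\in\mathbb{S}^{p-1}$ gives $(\bY_k-\tilde\bY_k)^\T\bu=(\bY_{k-1}-\tilde\bY_{k-1})^\T(\Ab^\T\bu)+\bE_k^\T[H(\bY_{k-1})-H(\tilde\bY_{k-1})]^\T\bu$. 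The first term contributes at most $a_1 G_{k-1}$, where $G_k:=\sup_{\bu}\norm{(\bY_k-\tilde\bY_k)^\T\bu}_{L(q)}$, because $\Ab^\T\bu$ has norm $\le a_1$; for the second term, conditioning on $(\bY_{k-1},\tilde\bY_{k-1})$ and using that $\bE_k$ is independent and subgaussian along a \emph{single} direction gives the conditional bound $\kappa_1'\norm{H(\bY_{k-1})-H(\tilde\bY_{k-1})}\le\kappa_1'\tfrac{a_2}{\kappa_*'}D_{k-1}$, so the second term is at most $a_2\tfrac{\kappa_1'}{\kappa_*'}\norm{D_{k-1}}_{L(q)}$. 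This yields the coupled linear recursion $G_k\le a_1 G_{k-1}+a_2\tfrac{\kappa_1'}{\kappa_*'}\norm{D_{k-1}}_{L(q)}$ with boundary value $G_j\le2\kappa_1$ (from \ref{A1}). Substituting the already-proven geometric bound for $\norm{D_{k-1}}_{L(q)}$, whose rate $a_1+a_2$ dominates the homogeneous rate $a_1$, and summing the resulting geometric series, I expect $G_k\lesssim(\kappa_1+\kappa_*\kappa_1'/\kappa_*')(a_1+a_2)^{k-j}=\kappa_1\max(1,\kappa_*\kappa_1'/\kappa_1\kappa_*')(a_1+a_2)^{k-j}$, which is precisely \ref{A3} with the stated $\gamma_3,\gamma_4$. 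The main obstacle is bookkeeping the two intertwined rates so that the slower-decaying forcing term $\norm{D_{k-1}}_{L(q)}$ sets the final rate while the direction-specific factor $\kappa_1'/\kappa_*'$ is preserved in the constant.

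Finally, for the last claim I would show that stationarity together with $\sup_\bu\norm{H(\bu)}<D_2$ forces \ref{A1}. Unrolling the recursion gives the a.s.\ convergent series $\bY_t=\sum_{l\ge0}\Ab^l H(\bY_{t-1-l})\bE_{t-l}$, the remainder $\Ab^N\bY_{t-N}$ vanishing in $L_2$ because $\norm{\Ab^N}\le a_1^N\to0$ and stationarity bounds $\E\norm{\bY_{t-N}}_2^2$. For any fixed $\bu\in\mathbb{S}^{p-1}$, each summand $(H(\bY_{t-1-l})^\T(\Ab^l)^\T\bu)^\T\bE_{t-l}$ has, conditionally on $\bY_{t-1-l}$, a $\psi_2$-norm at most $\norm{(\Ab^l)^\T\bu}_2\,D_2\,\kappa_1'\le a_1^l D_2\kappa_1'$; since this conditional Orlicz bound is uniform it transfers to the unconditional $\psi_2$-norm, and the triangle inequality for the Orlicz norm then gives $\norm{\bu^\T\bY_t}_{\psi_2}\le D_2\kappa_1'\sum_{l\ge0}a_1^l=D_2\kappa_1'/(1-a_1)<\infty$, so $\kappa_1<\infty$. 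The identical argument over $\bv\in\bar{\mathbb{S}}^{p-1}$ (using $\norm\bv_2=\sqrt p$) yields $\kappa_*<\infty$, establishing \ref{A1}. The delicate point here is that the $\psi_2$-norm is self-referential through $H(\bY_{t-1-l})$; resolving this via the infinite-series representation and the termwise, conditionally uniform Orlicz bound, rather than a direct one-step fixed-point argument, is what makes the finiteness conclusion rigorous.
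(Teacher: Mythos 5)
Your proposal is correct, and for the core claims \ref{A2}--\ref{A3} it coincides with the paper's proof: the same coupling (draw an independent copy $\tilde\bY_j$ and run the recursion forward with the \emph{same} innovations $\{\bE_t\}_{t>j}$), the same one-step contraction $\norm{D_k}_{L(1+\epsilon)}\le(a_1+a_2)\norm{D_{k-1}}_{L(1+\epsilon)}$ for \ref{A2}, and for \ref{A3} exactly the same intertwined recursion $G_k\le a_1G_{k-1}+a_2\tfrac{\kappa_1'}{\kappa_*'}\norm{D_{k-1}}_{L(1+\epsilon)}$ followed by summing $\sum_{\ell}a_1^\ell(a_1+a_2)^{k-1-\ell}\lesssim(a_1+a_2)^{k}/a_2$, yielding the constant $\max(\kappa_*\kappa_1'/\kappa_1\kappa_*',1)$. (Your explicit restriction to $1+\epsilon\le 2$ when bounding $\norm{\norm{\bE_k}_2}_{L(1+\epsilon)}\le\kappa_*'$ is in fact cleaner than the paper, which states the factor $a_1+a_2$ without flagging that for large $\epsilon$ the moment bound picks up an $\epsilon$-dependent constant that would perturb the rate.)

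Where you genuinely depart from the paper is the last claim (\ref{A1} under stationarity and bounded $H$). The paper argues in one step: $\kappa_1\le a_1\kappa_1+D_2\kappa_1'$, hence $\kappa_1\le D_2\kappa_1'/(1-a_1)$ by stationarity. As you correctly observe, this fixed-point algebra is only valid if one already knows $\kappa_1<\infty$; your route via the unrolled series $\bY_t=\Ab^N\bY_{t-N}+\sum_{l=0}^{N-1}\Ab^lH(\bY_{t-1-l})\bE_{t-l}$, with termwise conditionally-uniform $\psi_2$ bounds $a_1^lD_2\kappa_1'$ and the Orlicz triangle inequality, avoids that circularity and is the more rigorous argument. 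One small repair is needed in your version: stationarity does \emph{not} by itself bound $\E\norm{\bY_{t-N}}_2^2$ (identical marginals may a priori have infinite second moment, which is of the same nature as the gap you identified in the one-step argument). What stationarity does give for free is tightness of $\{\norm{\bY_{t-N}}_2\}_N$, so $\Ab^N\bY_{t-N}\to 0$ \emph{in probability}; the partial sums then converge to $\bu^\T\bY_t$ in probability, and a Fatou argument for the $\psi_2$-norm (pass to an a.s.\ convergent subsequence and apply Fatou's lemma inside the Orlicz expectation) transfers the uniform bound $\sum_l a_1^lD_2\kappa_1'$ to the limit. With that substitution your argument is complete, and the same reasoning over $\bv\in\bar{\mathbb{S}}^{p-1}$ gives $\kappa_*<\infty$ as you state.
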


\section{Proofs}\label{sec:proof}
\subsection{Proof of Theorem \ref{thm:subgaummb}}\label{subsec:thm2.1}
%First of all, we prove the moment inquality for sample autocovariance matrices under Assumptions \ref{A1}-\ref{A3}. 

\begin{proof}[Proof of Theorem \ref{thm:subgaummb}]
	 The proof depends mainly on the following tail probability bound of deviation of the sample covariance from its mean.
	
\begin{proposition}[Proof in Section \ref{subsec:thm2.1}]\label{prop:tailbd}
	Let $\{\bY_t\}_{t \in \Z}$ be a sequence of random vectors satisfying \ref{A1}-\ref{A3}. For any integer $n\geq 2$, integer $0\leq m\leq n-2$ and real number $0< \delta \leq 1$, define 
	\[
	M_\delta := C\max\Big\{
	\Big( \frac{\kappa_*}{\kappa_1}\Big)^2 \log \frac{n-m}{\delta}, \Big( \frac{\kappa_*}{\kappa_1} \Big)^2, \frac{2\kappa_*\gamma_1}{\kappa_1}
	\Big\}.
	\]
	Then for any $x \geq 0$,
	\begin{align*}
		\P[\norm{\hat{\mathbf{\Sigma}}_m - \E\hat{\mathbf{\Sigma}}_m} \geq \kappa_1^2\{x +\sqrt{\delta/(n-m)}\}]
	\leq  2p\exp\bigg\{-\frac{C'(n-m)^2x^2}{A_1(n-m) + A_2M_\delta^2 + A_3(n-m)xM_{\delta}} \bigg\}+ \delta, 
	\end{align*}
	with \begin{align*}
	&A_1 :=\frac{ \{
		\kappa_*\gamma_1/\kappa_1 + (\kappa_*/\kappa_1)^2(\gamma_3 + 2m+1) + 2m+1
		\}}{1-\exp\{-\min( \frac{5+\epsilon}{6\epsilon+10}\gamma_2, \gamma_4 )\}}, \hspace{1cm}A_2 := \frac{453^2}{\gamma_2},\\ 
	&A_3 := \frac{2\log (n-m)}{\log 2}\max\bigg\{1, 8m + \frac{48\log {(n-m)p}}{\gamma_2} \bigg\}
	\end{align*}
	for some constants $C,C' >0$ only depending on $\epsilon$.
\end{proposition}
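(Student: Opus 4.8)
The plan is to reduce the claim to the weakly dependent matrix Bernstein inequality of Theorem \ref{thm:bern}, applied to a truncated and self-adjointized version of the centered summands. Since $\bY_i\bY_{i+m}^\T$ fails to be symmetric once $m>0$, I would first pass to the Hermitian dilation $\mathcal{H}(\mathbf{M})=\begin{pmatrix}\mathbf{0}&\mathbf{M}\\ \mathbf{M}^\T&\mathbf{0}\end{pmatrix}$, which satisfies $\norm{\mathcal{H}(\mathbf{M})}=\norm{\mathbf{M}}$, so that the target becomes a spectral-norm bound for the partial sum of the self-adjoint matrices $\mathbf{X}_i:=\mathcal{H}\big(\bY_i\bY_{i+m}^\T-\E[\bY_i\bY_{i+m}^\T]\big)$, a form that Theorem \ref{thm:bern} is designed to handle, with the dimension prefactor $2p$ arising from the $2p\times 2p$ dilated matrices.

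Because Assumption \ref{A1} furnishes only subgaussian (hence unbounded) tails, the next step is truncation. I would fix a radius calibrated so that $M_\delta$ is the effective uniform bound on the truncated summands in units of $\kappa_1^2$, and split each $\bY_i$ into a bounded part and a remainder. The remainder is controlled by a union bound over the $n-m$ indices: by Assumption \ref{A1} the probability that a given $\bY_i$ exceeds the radius in the relevant direction is at most $\delta/(n-m)$, and summing yields the additive $\delta$. The separation between $\kappa_1$ and $\kappa_*$ enters here — after discretizing the sphere by a net, the operator norm of an outer product is read off coordinatewise (hypercube) suprema, so $\kappa_*$ governs the truncation level and the ratio $(\kappa_*/\kappa_1)^2=r_*$ surfaces in $M_\delta$. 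Truncation also introduces a systematic bias, namely the gap between the mean of the truncated average and $\E\hat\bSigma_m$; by Cauchy--Schwarz against the $\delta/(n-m)$ tail probability this gap is of order $\kappa_1^2\sqrt{\delta/(n-m)}$, which is precisely the shift appearing inside the probability in the statement.

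The crux is then verifying that the truncated, centered, dilated matrices satisfy the weak dependence hypothesis of Theorem \ref{thm:bern} with the advertised constants. This is where Assumptions \ref{A2}--\ref{A3} are used: for indices far apart I would couple $\bY_k$ with its independent replica $\tilde\bY_k$ and propagate the geometrically decaying $L(1+\epsilon)$ coupling error through the product $\bY_i\bY_{i+m}^\T$. Since each summand is bilinear, a single coupling perturbs two factors at once, and the two assumptions play complementary roles: the directional bound \ref{A3} (rate $\gamma_3,\gamma_4$) controls the scalar inner products $\bY_{i+m}^\T\bY_j$ that appear when multiplying two summands, while the full-vector bound \ref{A2} (rate $\gamma_1,\gamma_2$) controls the outer factors. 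Aggregating the resulting cross-covariance estimates produces the variance proxy $A_1(n-m)$, in which the geometric rate $\min\big(\tfrac{5+\epsilon}{6\epsilon+10}\gamma_2,\gamma_4\big)$, the near-diagonal band of width $2m+1$ (summands overlap whenever $|i-j|\le m$), and the coupling constants $\gamma_1,\gamma_3$ all enter as expected.

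Finally, applying Theorem \ref{thm:bern} to $\{\mathbf{X}_i\}$ yields a Bernstein exponent of the shape $-C't^2/(V+bt)$ for $\mathbf{S}=\sum_{i=1}^{n-m}\mathbf{X}_i$. Writing the event as $\norm{\mathbf{S}}\ge(n-m)\kappa_1^2 x$, so that $t=(n-m)x$ in units of $\kappa_1^2$, and matching the variance parameter to $V=A_1(n-m)+A_2M_\delta^2$ and the uniform-bound parameter to $b=A_3M_\delta$ reproduces the denominator $A_1(n-m)+A_2M_\delta^2+A_3(n-m)xM_\delta$; the logarithmic factors inside $A_3$ are exactly those generated by the dyadic block decomposition underlying the dependent Bernstein inequality. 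I expect the principal obstacle to be the bookkeeping in the third step: keeping the $L(1+\epsilon)$ moments of the truncated bilinear products under control while transmitting the coupling error through both factors simultaneously, and checking that the two distinct decay rates combine into the single clean rate recorded in $A_1$ — in particular, the precise constant $\tfrac{5+\epsilon}{6\epsilon+10}$ multiplying $\gamma_2$ should emerge from balancing the Hölder exponent $1+\epsilon$ against the truncation error during this decoupling.
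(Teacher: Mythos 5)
Your proposal follows essentially the same route as the paper's proof: truncate the outer-product matrices (with Hermitian dilation for $m>0$), control the tail by a union bound using the $\kappa_*$-driven sub-exponential tail of $\norm{\bY_i}_2^2$ and the bias by Cauchy--Schwarz to get the $\kappa_1^2\sqrt{\delta/(n-m)}$ shift, verify $\tau$-mixing and the variance proxy of the truncated sequence through the \ref{A2}/\ref{A3} couplings (the paper's Lemmas \ref{lem:tau} and \ref{lem:nu} carry out exactly the bookkeeping you flag as the principal obstacle, including a monotonicity lemma showing that truncation contracts the coupling distance), and conclude with Theorem \ref{thm:bern}. Your anticipation that the constant $\tfrac{5+\epsilon}{6\epsilon+10}$ multiplying $\gamma_2$ emerges from balancing the H\"older exponent against the truncation error is precisely what happens in the paper's variance computation.
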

	
	Without loss of generality, let $m = 0$. Taking $x = \sqrt{\frac{r_*\log ep}{n} t}$, $\delta = x^{-\gamma}$ for some $\gamma>1$, $\gamma_1 = O(\sqrt{r_*})$, and  $\gamma_3 = O(1)$ in Proposition \ref{prop:tailbd}, we obtain 
\begin{align*}
\P\Big(\norm{\hat{\mathbf{\Sigma}}_0 - \E\hat{\mathbf{\Sigma}}_0 } \geq C_1\kappa_1^2\sqrt{\frac{r_*\log ep}{n} t}\Big) \leq 2p\exp\bigg[
-\frac{ C_2 (\log ep) t/\{\log(\sqrt{\frac{r_*\log ep}{n} t})\}^2  }{1 + \frac{r_*(\log n)^2}{n} + \sqrt{\frac{r_*\log ep}{n}t} (\log np)^3}
\bigg] + x^{-\gamma}
\end{align*}
for some constants $C_1, C_2 >0$ only depending on $\epsilon, \gamma_2, \gamma_4$.

If $1 + \frac{r_*(\log n)^2}{n} \geq \frac{r_*\log ep (\log np)^6}{n}$, we have
\begin{align*}
\frac{\E\norm{\hat{\mathbf{\Sigma}}_0 - \E\hat{\mathbf{\Sigma}}_0}^2}{(C_1\kappa_1^2\sqrt{\frac{r_*\log ep}{n} })^2} 
\leq &1 + \frac{r_*(\log n)^2}{n}  + \int_{1 + \frac{r_*(\log n)^2}{n} }^{
	\frac{\{1 + \frac{r_*(\log n)^2}{n}\}^2}{\frac{r_*\log ep (\log np)^6}{n}}
} 2p\exp\bigg[
-\frac{ C_2 (\log ep) t/\{\log(\sqrt{\frac{r_*\log ep}{n} t})\}^2  }{1 + \frac{r_*(\log n)^2}{n}}
\bigg]\mathrm{d}t \\
&+ \int_{	\frac{\{1 + \frac{r_*(\log n)^2}{n}\}^2}{\frac{r_*\log ep (\log np)^6}{n}}}^\infty 
2p\exp\bigg[
-\frac{ C_2 (\log ep) \sqrt{t}/\{\log(\sqrt{\frac{r_*\log ep}{n} t})\}^2  }{\sqrt{\frac{r_*\log ep (\log np)^6}{n}}}
\bigg]\mathrm{d}t\\
\leq & C_3\Big( 1 + \frac{r_*(\log n)^2}{n} + \frac{r_* \log ep(\log np)^6}{n}  \Big).
\end{align*}
This gives that 
$$	\E\norm{\hat{\mathbf{\Sigma}}_0 - \E\hat{\mathbf{\Sigma}}_0}^2 \leq C_4\kappa_1^4\Big\{\frac{r_*\log ep}{n} + \frac{r_*^2(\log ep)^2(\log np)^6}{n^2}\Big\}.$$
On the other hand, if $1 + \frac{r_*(\log n)^2}{n} \leq \frac{r_*\log ep (\log np)^6}{n}$, 
\begin{align*}
\frac{\E\norm{\hat{\mathbf{\Sigma}}_0 - \E\hat{\mathbf{\Sigma}}_0}^2}{(C_1\kappa_1^2\sqrt{\frac{r_*\log ep}{n} })^2} 
\leq&  \frac{r_*\log ep(\log np)^6}{n} + \int_{ \frac{r_*\log ep(\log np)^6}{n}}^{\infty} 2p\exp\bigg[
-\frac{ C_2 (\log ep) \sqrt{t}/\{\log(\sqrt{\frac{r_*\log ep}{n} t})\}^2  }{\sqrt{\frac{r_*\log ep (\log np)^6}{n}}}
\bigg] \mathrm{d}t\\
\leq& C_5\frac{r_*\log ep(\log np)^6}{n}.
\end{align*}
This renders $$\E\norm{\hat{\mathbf{\Sigma}}_0 - \E\hat{\mathbf{\Sigma}}_0}^2 \leq C_5\kappa_1^4\Big\{\frac{r_*^2(\log ep)^2(\log np)^6}{n^2}\Big\}.$$
Combining two cases gives us the final result by using the simple fact that $\E\norm{\hat{\mathbf{\Sigma}}_0 - \E\hat{\mathbf{\Sigma}}_0} \leq	(\E\norm{\hat{\mathbf{\Sigma}}_0 - \E\hat{\mathbf{\Sigma}}_0}^2)^{\frac{1}{2}}$. This completes the proof of the first part of Theorem \ref{thm:subgaummb}.

%With additional Assumption \ref{A4}, zero-mean, and stationarity, we can further prove Corollary \ref{cor: subgaummb}.

%\begin{proof}[Proof of Corollary \ref{cor: subgaummb}]
	Notice that under Assumptions \ref{A1}, \ref{A4}, zero-mean, and second-order stationarity, we have $\kappa_1^2 \asymp \norm{\mathbf{\Sigma}_0}$ and $\kappa_*^2 \asymp \tr(\mathbf{\Sigma}_0)$. Thus plugging in the first part of Theorem \ref{thm:subgaummb} finishes the proof.
%\end{proof}
\end{proof}

Now we prove Proposition \ref{prop:tailbd} under Assumptions \ref{A1}-\ref{A3}. In the proof, the cases for covariance and autocovariance matrices are treated separately. In the following we give a roadmap. The proof of Proposition \ref{prop:tailbd} is based on combining a Bernstein-type inequality for the almost surely (a.s.) bounded matrices and a truncation method. The probability bound for the a.s. bounded part (a.k.a., the truncated part) of the random matrix is obtained by employing a Bernstein-type inequality for $\tau$-mixing random matrices, which is presented in Theorem \ref{thm:bern}, and some related lemmas (Lemmas \ref{lem:tau} and \ref{lem:nu}), whose proofs are presented later. The tail part of the random matrix is controlled under the sub-Gaussian Assumption \ref{A1}, which uses Lemma \ref{lem:subgautail} that will be presented soon.

In more detail, given a sequence of random vectors $\{\bY_t\}_{t \in \Z}$, denote $\Xb_t := \bY_t \bY_t^\T$ for all $t \in \Z$. Then for any constant $M >0$, we introduce the following ``truncated" version of $\Xb_t$: 
\[
\Xb_t^M := \frac{M \wedge \norm{\Xb_t}}{\norm{\Xb_t}}\Xb_t,
\]
where $a\wedge b:={\rm min}(a,b)$ for any two real numbers $a,b$.

For any integer $m>0$, we denote $\Zb^{(m)}_t : = \bY_t\bY_{t+m}^\T$ for all $t \in \Z$. For the sake of clarification,  the superscript ``$(m)$" is dropped when no confusion is possible. Then the truncated version is  $$\Zb^{M}_t : = \frac{M \wedge \norm{\Zb_t}}{ \norm{\Zb_t}}\Zb_t$$ for any $M>0$. 

We further define the ``variances" for $\{\Xb_i^M\}_{i = 1}^n$ and $\{\Zb_i^M\}_{i = 1}^{n-m}$  as
\begin{align*}
\nu^2_{\Xb^M} &:= \sup_{K \subseteq \{1, \dots, n\}}\frac{1}{\card{(K)}}\lambda_{\max}\bigg\{\E\bigg(\sum_{i\in K}\Xb_i^M - \E\Xb_i^M\bigg)^2\bigg\},\\
\nu^2_{\Zb^M} &:= \sup_{K \subseteq \{1, \dots, n-m\}}\frac{1}{\card{(K)}}\bigg\|\E\bigg(\sum_{i\in K}\Zb_i^M - \E\Zb_i^M\bigg)^2\bigg\|.
\end{align*}
Here $\lambda_{\max}(\Xb)$ and $\lambda_{\min}(\Xb)$ denote the largest and smallest eigenvalues of $\Xb$ respectively.

\begin{proof}[Proof of Proposition \ref{prop:tailbd}]
	We first assume $\kappa_1= 1$. We consider two cases.
	
	\textbf{Case I:} When $m = 0$, $\{\Xb_t\}_{t \in \Z}$ is a sequence of symmetric random matrices. We have,
	\begin{align}\label{eq:taildecomp}
	&\P\bigg\{\frac{1}{n}\bigg\lVert\sum_{i = 1}^{n} (\Xb_i - \E \Xb_i)\bigg\rVert \geq x \bigg\} \notag\\
	= &\P\bigg\{\frac{1}{n}\bigg\lVert\sum_{i = 1}^{n} (\Xb_i - \Xb_i^M + \Xb_i^M - \E \Xb_i^M + \E \Xb_i^M - \E \Xb_i)\bigg\rVert \geq x \bigg\} \notag\\
	\leq & \P\bigg\{\frac{1}{n}\bigg\lVert\sum_{i = 1}^{n} (\Xb_i^M - \E \Xb_i^M +\E\Xb_i^M -\E \Xb_i )\bigg\rVert +  \frac{1}{n}\bigg\lVert\sum_{i = 1}^{n} (\Xb_i -  \Xb_i^M)\bigg\rVert \geq x \bigg\} \notag\\
	\leq &\P\bigg\{\bigg\lVert\sum_{i = 1}^{n} (\Xb_i^M - \E \Xb_i^M + \E\Xb_i^M - \E \Xb_i )\bigg\rVert \geq nx\bigg\}+  \P\bigg\{\bigg\lVert\sum_{i = 1}^{n} (\Xb_i- \Xb_i^M)\bigg\rVert > 0 \bigg\} \notag\\
	\leq &\P\bigg\{\bigg\lVert\sum_{i = 1}^{n} (\Xb_i^M - \E \Xb_i^M)\bigg\rVert \geq nx-\sum_{i = 1}^n\norm{\E\Xb_i^M - \E \Xb_i} \bigg\} +  \sum_{i = 1}^{n}\P(\Xb_i \neq \Xb_i^M ) \notag\\
	\leq &\P\bigg[\lambda_{\max}\bigg\{\sum_{i = 1}^{n} (\Xb_i^M - \E \Xb_i^M)\bigg\} \geq nx-\sum_{i = 1}^n\norm{\E\Xb_i^M - \E \Xb_i} \bigg] + \notag\\
	&\P\bigg[\lambda_{\min}\bigg\{\sum_{i = 1}^{n} (\Xb_i^M - \E \Xb_i^M)\bigg\} \leq -nx+\sum_{i = 1}^n\norm{\E\Xb_i^M - \E \Xb_i} \bigg] + \sum_{i = 1}^{n}\P(\Xb_i \neq \Xb_i^M ).
	\end{align}
	
	We first show that the difference in expectation between the ``truncated" $\Xb_t^{M_\delta}$ and original one $\Xb_t$ can be controlled with the chosen truncation level $M_{\delta}$. For this, we need the following lemma.
	
	\begin{lemma}[Proof in Section \ref{subsec:aux}] \label{lem:subgautail}
		Let $\{\bY_t\}_{t\in \Z}$ be a sequence of $p$-dimensional random vectors under Assumption \ref{A1}. Then for all $t \in \Z$ and for all $x \geq 0$,
		$$\P\{\norm{\bY_t }_2^2  \geq 2\kappa_*^2 + 8\kappa_*^2(x + \sqrt{x})\}\leq \exp(-Cx)$$
		for some arbitary constant $C>0$.
	\end{lemma}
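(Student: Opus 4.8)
The plan is to reduce the squared Euclidean norm $\norm{\bY_t}_2^2$ to a \emph{linear} form indexed by the discrete hypercube $\bar{\mathbb{S}}^{p-1}$, which is precisely the object controlled by $\kappa_*$ in Assumption \ref{A1}. The key observation is a Rademacher symmetrization identity: if $\beps$ is a vector of independent Rademacher signs, uniform on $\{-1,1\}^p$ and drawn independently of $\bY_t$, then, conditionally on $\bY_t$,
\[
\E_{\beps}(\beps^\T\bY_t)^2=\sum_{j,k}\E[\epsilon_j\epsilon_k]\,Y_{t,j}Y_{t,k}=\sum_{j}Y_{t,j}^2=\norm{\bY_t}_2^2 .
\]
Every realization of $\beps$ lies in $\bar{\mathbb{S}}^{p-1}$, so Assumption \ref{A1} gives $\norm{\beps^\T\bY_t}_{\psi_2}\le\kappa_*$ \emph{uniformly} in $\beps$. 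This converts a statement about the $p$-dimensional vector $\bY_t$ into a statement about a single sub-Gaussian scalar whose $\psi_2$ norm is under control, with the averaging over $\beps$ recovering exactly $\kappa_*^2$ as the relevant scale.

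Next I would bound the moment generating function of $\norm{\bY_t}_2^2$. For $\lambda\ge0$, applying Jensen's inequality conditionally on $\bY_t$ (convexity of $\exp$) and then Fubini gives
\[
\E\exp(\lambda\norm{\bY_t}_2^2)=\E\exp\{\lambda\,\E_{\beps}(\beps^\T\bY_t)^2\}\le\E_{\beps}\E\exp\{\lambda(\beps^\T\bY_t)^2\}.
\]
For each fixed sign vector, the defining property of the $\psi_2$ norm yields $\E\exp\{(\beps^\T\bY_t)^2/\kappa_*^2\}\le2$, and the convexity of $\lambda\mapsto\log\E\exp\{\lambda(\beps^\T\bY_t)^2\}$ then interpolates this to $\E\exp\{\lambda(\beps^\T\bY_t)^2\}\le2^{\lambda\kappa_*^2}$ for all $0\le\lambda\le\kappa_*^{-2}$. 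Hence $\E\exp(\lambda\norm{\bY_t}_2^2)\le2^{\lambda\kappa_*^2}$ on that range, and a Chernoff bound at $\lambda=\kappa_*^{-2}$ gives $\P(\norm{\bY_t}_2^2\ge s)\le2\exp(-s/\kappa_*^2)$. Substituting $s=2\kappa_*^2+8\kappa_*^2(x+\sqrt{x})$ produces the claimed $\exp(-Cx)$ decay, the additive $2\kappa_*^2$ and the $\sqrt{x}$ term serving only to enlarge the threshold and leaving ample slack to absorb the leading factor $2$.

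The only genuinely delicate point is the first step: recognizing that the discrete hypercube $\bar{\mathbb{S}}^{p-1}$, rather than the unit sphere $\mathbb{S}^{p-1}$, is the correct index set, so that the Rademacher randomization matches $\kappa_*$ exactly and no union bound over the $2^p$ sign patterns is incurred. Everything downstream --- the Jensen/Fubini interchange, the sub-Gaussian square MGF estimate via convexity of the cumulant generating function, and the Chernoff optimization --- is routine, and the constants $2$ and $8$ are chosen for convenience rather than sharpness. I expect this symmetrization device to be exactly what makes $\kappa_*$ (equivalently $r_*=\kappa_*^2/\kappa_1^2$) the natural scale entering the truncation level $M_\delta$ in Proposition \ref{prop:tailbd}.
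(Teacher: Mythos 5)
Your proof is correct, but it takes a different (and more self-contained) route than the paper's. The paper proves Lemma \ref{lem:subgautail} by citing Lemma A.2 of \cite{bunea2015sample} to get the moment bounds $\E\norm{\bY_t}_2^{2k}\leq (2k)^k\kappa_*^{2k}$, converting these into an Orlicz-norm bound $\norm{\,\norm{\bY_t}_2^2-\E\norm{\bY_t}_2^2\,}_{\psi_1}\leq 8\kappa_*^2$ on the \emph{centered} square, and then invoking the standard Bernstein-type tail inequality for sub-exponential variables, $\P(\norm{\bY_t}_2^2-\E\norm{\bY_t}_2^2\geq x)\leq \exp\{-C\min(x^2/64\kappa_*^4,\,x/8\kappa_*^2)\}$; the threshold $2\kappa_*^2+8\kappa_*^2(x+\sqrt{x})$ then arises naturally as mean bound plus the two-regime deviation. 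You instead bound the moment generating function of the \emph{uncentered} $\norm{\bY_t}_2^2$ directly, via the Rademacher identity $\norm{\bY_t}_2^2=\E_{\beps}(\beps^\T\bY_t)^2$, Jensen/Fubini, the $\psi_2$ definition at the hypercube points, and the convexity interpolation $\E\exp\{\lambda(\beps^\T\bY_t)^2\}\leq 2^{\lambda\kappa_*^2}$, ending with a Chernoff bound $\P(\norm{\bY_t}_2^2\geq s)\leq 2\exp(-s/\kappa_*^2)$. This is in fact the same symmetrization device that powers the Bunea--Xiao lemma the paper quotes, but you deploy it directly on the MGF, which avoids both the external citation and the $\psi_1$-norm algebra, and yields a clean pure-exponential tail that is strictly strong enough for the statement (with $s=2\kappa_*^2+8\kappa_*^2(x+\sqrt{x})$ you get $2e^{-2}e^{-8x}\leq e^{-8x}$). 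The trade-off is interpretive rather than substantive: in the paper's argument the additive $2\kappa_*^2$ and the $\sqrt{x}$ term have meaning (mean plus sub-Gaussian regime of the Bernstein bound), whereas in yours they are pure slack; and the paper's intermediate moment/$\psi_1$ bounds are reused in spirit elsewhere, while your argument is purpose-built for this one tail estimate. Both proofs are valid.
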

	
	By applying Lemma \ref{lem:subgautail}, we obtain that for all $i \in \{ 1, \dots, n\}$, 
		\begin{align*}
	\norm{\E\Xb_i^{M_\delta} - \E \Xb_i} 
	=&\bigg\|\E\Big(1-\frac{M_\delta}{\norm{\Xb_i}}\Big)\Xb_i\mathbf{1}_{\{\norm{\Xb_i} >M_\delta\}}\bigg\|\\
	\leq & \sup_{\bu, \bv \in \mathbb{S}^{p-1}}\E |\bu^\T \Xb_i \bv|\mathbf{1}_{\{\norm{\Xb_i} >M_\delta\}}\\
	\leq  & \sup_{\bu, \bv \in \mathbb{S}^{p-1}}\{\E (\bu^\T \bY_i\bY_i^\T \bv)^2\}^{\frac{1}{2}}\{\P(\norm{\Xb_i} >M_\delta)\}^{\frac{1}{2}}\\
	\leq &\sqrt{\delta/n},
	\end{align*}
	where the last line followed by Assumption \ref{A1}, Lemma \ref{lem:subgautail}, and the chosen $M_\delta$.
	
	The second step heavily depends on a Bernstein-type inequality for $\tau$-mixing random matrices. The theorem slightly extends the main theorem of \cite{banna2016bernstein} in which the random matrix sequence is assumed to be $\beta$-mixing. Its proof is relegated to the Appendix.
	
	\begin{theorem}[Proof in Appendix]\label{thm:bern}
		Consider a sequence of real, mean-zero, symmetric $p \times p$ random matrices $\lbrace \Xb_t \rbrace_{t\in \Z}$ with $\lVert \Xb_t \rVert \leq M$ for some positive constant $M$. In addition, assume that this sequence is $\tau$-mixing (see, Appendix Section \ref{sec:tau} for a detailed introduction to the $\tau$-mixing coefficient) with geometric decay, i.e.,
		\[
		\tau(k; \{\Xb_t\}_{t \in \Z}, \norm{\cdot}) \leq M\psi_1\exp\{-\psi_2(k-1)\} 
		\]
		for some constants $\psi_1, \psi_2 > 0$. Denote $\tilde{\psi}_1 := \max\{p^{-1}, \psi_1\}$. Then for any $ x\geq 0$ and any integer $n\geq 2$, we have 
		\begin{align*}
		\P\bigg\{\lambda_{\max}\bigg(\sum_{i = 1}^{n}\Xb_i\bigg) \geq x\bigg\}
		\leq p \exp\bigg\{-\frac{x^2}{8(15^2n\nu^2 + 60^2M^2/\psi_2) + 2xM\tilde{\psi}(\tilde{\psi}_1,\psi_2,n,p)}\bigg\},
		\end{align*}
		where 
		\begin{align*}
		&\nu^2 := \sup_{K\subseteq \lbrace 1,\dots, n \rbrace}\frac{1}{\card(K)}\lambda_{\max}\bigg\{\E\bigg(\sum_{i \in K}\Xb_i\bigg)^2\bigg\}\ \mbox{and}\ \tilde{\psi}(\tilde{\psi}_1,\psi_2,n,p) := \frac{\log n}{\log 2} \max\bigg\{1,\frac{8\log(\tilde{\psi}_1n^6p)}{\psi_2}\bigg\}.
		\end{align*}
	\end{theorem}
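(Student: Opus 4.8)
The plan is to establish the Bernstein-type inequality in Theorem \ref{thm:bern} by reducing the $\tau$-mixing matrix setting to a framework where the $\beta$-mixing result of \cite{banna2016bernstein} (or its underlying machinery) can be invoked, and then tracking the explicit constants. The key structural idea is that, since the $\Xb_t$ are a.s.\ bounded by $M$ and symmetric, the largest eigenvalue $\lambda_{\max}(\sum_i \Xb_i)$ can be controlled through the matrix Laplace transform method: one bounds $\E\,\tr\exp\{\theta\sum_i \Xb_i\}$ for a suitable parameter $\theta>0$ and then optimizes. First I would recall the blocking technique underlying \cite{merlevede2011bernstein} and \cite{banna2016bernstein}: partition $\{1,\dots,n\}$ into blocks whose size grows logarithmically (this is the source of the $\log n/\log 2$ factor in $\tilde\psi$), so that widely separated blocks are nearly independent by the mixing hypothesis.

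Second, I would quantify the cost of replacing the genuine (dependent) blocks with independent copies. This is where the $\tau$-mixing coefficient enters. The definition of $\tau$-mixing gives an $L_1$-type coupling bound: for each relevant conditioning $\sigma$-field one can construct an independent copy of the future block whose expected distance to the true block is controlled by $\tau(k;\{\Xb_t\},\norm{\cdot})\le M\psi_1\exp\{-\psi_2(k-1)\}$. Because the functional used in the matrix Laplace transform (namely $\bx\mapsto\tr\exp\{\cdots\}$ restricted to the bounded matrices) is Lipschitz with a controllable constant once everything is truncated at level $M$, the coupling error can be summed along the geometric decay; the geometric series produces the factor $1/\psi_2$ appearing in the $60^2M^2/\psi_2$ term and inside $\tilde\psi$. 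The quantity $\tilde\psi_1:=\max\{p^{-1},\psi_1\}$ arises because, after taking a matrix exponential, the dimensional factor $p$ from the trace inequality interacts multiplicatively with $\psi_1$, and one only ever needs $\psi_1$ through the combination $\tilde\psi_1 p$.

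Third, I would assemble the variance proxy. The term $15^2 n\nu^2$ should emerge by bounding the second-order (variance) contribution of each block via the supremized quantity $\nu^2$, exactly as the analogous $\nu^2$ is defined for the truncated matrices $\Xb_i^M$ in Proposition \ref{prop:tailbd}; the supremum over subsets $K$ is precisely what makes the blocking argument robust to the block sizes chosen. The numerical constants $15^2$, $60^2$, and the leading $8$ are inherited from the \cite{merlevede2011bernstein}/\cite{banna2016bernstein} computations and I would not re-derive them from scratch but rather verify that the $\beta$-mixing steps survive verbatim under the weaker $\tau$-mixing coupling, the only change being which coupling lemma supplies the block-replacement error.

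The main obstacle I anticipate is the step of transferring the argument from $\beta$-mixing to $\tau$-mixing. The $\beta$-mixing bound in \cite{banna2016bernstein} uses total-variation coupling, which lets one swap a whole block for an exact independent copy on a high-probability event; $\tau$-mixing only provides a weaker $L_1$ (Wasserstein-type) coupling, so one cannot replace blocks exactly but only up to a small expected spectral-norm perturbation. Controlling how this perturbation propagates through the matrix exponential/trace functional—and verifying that the Lipschitz bound on $\tr\exp\{\theta(\cdot)\}$ over matrices bounded by $M$ does not blow up the constants—is the delicate part. I expect this is handled by the auxiliary Lemmas \ref{lem:tau} and \ref{lem:nu} cited in the roadmap, so the proof would invoke those to absorb the coupling error and then close via the standard Chernoff optimization over $\theta$, choosing $\theta$ to balance the variance term against the boundedness term and yielding the stated denominator.
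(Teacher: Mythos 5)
Your overall strategy matches the paper's: bound the matrix Laplace transform $\E\tr\exp\{t\sum_i\Xb_i\}$, decouple dependent blocks by substituting independent copies supplied by the $L_1$-coupling that $\tau$-mixing provides, control the substitution error through a Lipschitz-type bound on $\Xb\mapsto\tr\exp(\Xb)$, and inherit the numerical constants from \cite{merlevede2011bernstein} and \cite{banna2016bernstein}. You also correctly isolate the one step where the $\beta$-mixing argument genuinely changes: Berbee's total-variation coupling swaps in an exact independent copy on a high-probability event, whereas the $\tau$-coupling (Lemma \ref{lem:dedecker}) only gives a copy that is close in expected spectral norm, and that perturbation must be propagated through the trace exponential.

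There is, however, a concrete gap: you defer precisely this delicate step to Lemmas \ref{lem:tau} and \ref{lem:nu}, and those lemmas cannot do that job. They are not ingredients of the proof of Theorem \ref{thm:bern}; they verify its \emph{hypotheses} (geometric $\tau$-mixing and the variance bound $\nu^2$) for the particular truncated sequences $\{\Xb_t^M\}$ and $\{\Zb_t^M\}$ used in Proposition \ref{prop:tailbd}, so invoking them inside the proof of Theorem \ref{thm:bern} would be circular. What actually absorbs the coupling error in the paper is separate machinery in the appendix: (i) the Cantor-like set construction of Section \ref{sec:cantor}, a recursive binary-tree partition in which blocks at level $k$ are separated by gaps $d_{k-1}\geq B\delta(1-\delta)^{k-1}/2^{k}$ — it is the recursion depth of this construction (together with the repeated decomposition $\sum_j\Xb_j=\sum_{i=0}^L\Sb_i$ with $L\leq\log n/\log2$), not logarithmically growing block sizes, that produces the $\log n/\log 2$ factor in $\tilde\psi$; (ii) the decoupling Lemma \ref{decoup}, which on the event that the coupled copy is $\epsilon$-close (Markov plus the $\tau$ bound) uses $\lVert\exp(\Xb)-\exp(\Yb)\rVert\leq\lVert\Xb-\Yb\rVert\exp(\lVert\Xb-\Yb\rVert)\exp(\lVert\Yb\rVert)$, and off that event pays the factor $L_2=\exp\{\card(H_1^1)tM\}\tau_0/\epsilon$ — the geometric gap widths are exactly what allow the decay of $\tau$ to beat this exponential factor within the admissible range of $t$; and (iii) Lemma \ref{lem:bernlm2} together with Lemma 3 of \cite{merlevede2009bernstein} to recombine the levels before the final matrix Chernoff step. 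Without this construction and decoupling lemma (or equivalents), your outline does not close.
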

	
	In order to apply Theorem \ref{thm:bern}, we need the following two lemmas. Lemma \ref{lem:tau} is to show that the sequence of ``truncated" matrices $\{\Xb_t^M\}$ under Assumptions \ref{A1}-\ref{A2} is a $\tau$-mixing random sequence with geometric decay. Lemma \ref{lem:nu} calculates the upper bound for $\nu^2$ term in Theorem \ref{thm:bern} for $\{\Xb_t^M\}_{t \in \Z}$.
	
	\begin{lemma}[Proof in Section \ref{subsec:aux}] \label{lem:tau}
		Let $\{\bY_t\}_{t \in \Z}$ be a sequence of random vectors under Assumptions \ref{A1}-\ref{A2}. Then $\{ \Xb_t^M \}_{t \in \Z}$, $\{ \Xb_t^M -\E \Xb_t^M\}_{t \in \Z}$, $\{ \Zb_t^M \}_{t \in \Z}$, and $\{ \Zb_t^M -\E \Zb_t^M\}_{t \in \Z}$ are all $\tau$-mixing random sequences. Moreover, 
			\begin{align*}
		&\tau(k; \{\Xb_t^M\}_{t \in \Z}, \norm{\cdot})  \leq  C\gamma_1\kappa_1\kappa_*\exp\{-\gamma_2(k-1)\},\\
		&\tau(k; \{\Xb_t^M - \E\Xb_t^M\}_{t \in \Z}, \norm{\cdot})  \leq  C\gamma_1\kappa_1\kappa_*\exp\{-\gamma_2(k-1)\},\\
		&\tau(k; \{\Zb_t^M\}_{t \in \Z}, \norm{\cdot})  \leq  C'\exp\{\gamma_2\min(k,m)\}
		\max( \gamma_1\kappa_1\kappa_*, \kappa_*^2)\exp\{-\gamma_2(k-1)\},\\
		&\tau(k; \{\Zb_t^M - \E\Zb_t^M \}_{t \in \Z}, \norm{\cdot})  \leq  C'\exp\{\gamma_2\min(k,m)\}
		\max( \gamma_1\kappa_1\kappa_*, \kappa_*^2)\exp\{-\gamma_2(k-1)\}
		\end{align*}
		for $k \geq 1$ and some constants $C, C'>0$ only depending on $\epsilon$.
	\end{lemma}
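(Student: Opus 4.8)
My plan is to bound each $\tau$-mixing coefficient by exhibiting an explicit coupling and controlling its expected cost, using that for any $\sigma$-algebra $\mathcal M$ and random matrix $\bW$ one has $\tau(\mathcal M,\bW)\le \E\norm{\bW-\bW^*}$ for every copy $\bW^*\overset{d}{=}\bW$ that is independent of $\mathcal M$; only this ``$\le$'' direction of the coupling characterization of \cite{dedecker2004coupling} is needed. The couplings will be inherited from Assumption \ref{A2}: applied at a time $j$ it furnishes $\{\tilde\bY_t\}_{t>j}$ independent of $\sigma(\{\bY_t\}_{t\le j})$ and equal in law to $\{\bY_t\}_{t>j}$, and pushing this single coupled sequence through the same measurable truncation maps produces $\tilde\Xb_t^M$ and $\tilde\Zb_t^M$ that are valid independent copies of the corresponding future blocks. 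Since the centerings $\E\Xb_t^M,\E\Zb_t^M$ are deterministic and $f(\cdot-c)$ is $1$-Lipschitz whenever $f$ is, the centered sequences carry the same $\tau$-coefficients as the uncentered ones, so it suffices to treat $\{\Xb_t^M\}$ and $\{\Zb_t^M\}$.

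The core of the argument consists of two deterministic estimates that convert vector differences into matrix differences. First, the truncation $\bW\mapsto (M\wedge\norm\bW)\bW/\norm\bW$ is the radial retraction onto the spectral-norm ball of radius $M$, which is $2$-Lipschitz in any norm; hence $\norm{\Zb_k^M-\tilde\Zb_k^M}\le 2\norm{\bY_k\bY_{k+m}^\T-\tilde\bY_k\tilde\bY_{k+m}^\T}$, while for the covariance case the sharper identity $\Xb_k^M=\bY_k^M(\bY_k^M)^\T$, with $\bY_k^M$ the $L_2$-truncation of $\bY_k$ and the $L_2$-ball retraction $1$-Lipschitz, is available. Second, using
\[
\bY_k\bY_{k+m}^\T-\tilde\bY_k\tilde\bY_{k+m}^\T=\bY_k(\bY_{k+m}-\tilde\bY_{k+m})^\T+(\bY_k-\tilde\bY_k)\tilde\bY_{k+m}^\T
\]
together with $\norm{\bu\bv^\T}=\norm\bu_2\norm\bv_2$ gives $\norm{\Zb_k^M-\tilde\Zb_k^M}\le 2\{\norm{\bY_k}_2\norm{\bY_{k+m}-\tilde\bY_{k+m}}_2+\norm{\bY_k-\tilde\bY_k}_2\norm{\tilde\bY_{k+m}}_2\}$, and the analogous $\norm{\Xb_k^M-\tilde\Xb_k^M}\le(\norm{\bY_k}_2+\norm{\tilde\bY_k}_2)\norm{\bY_k-\tilde\bY_k}_2$ for $m=0$.

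Taking expectations and applying Hölder's inequality with exponents $(1+\epsilon)/\epsilon$ and $1+\epsilon$ separates a ``size'' factor from a ``coupling'' factor. The size factors $\norm{\norm{\bY_t}_2}_{L((1+\epsilon)/\epsilon)}$ are $O(\kappa_*)$: Lemma \ref{lem:subgautail} shows $\norm{\bY_t}_2^2$ has sub-exponential tails at scale $\kappa_*^2$, so every fixed-order moment of $\norm{\bY_t}_2$ is $\lesssim\kappa_*$ with a constant depending only on $\epsilon$. The coupling factors $\norm{\norm{\bY_k-\tilde\bY_k}_2}_{L(1+\epsilon)}$ are exactly what Assumption \ref{A2} bounds by $\gamma_1\kappa_1\exp\{-\gamma_2(k-j-1)\}$. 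For $m=0$ this already yields $\E\norm{\Xb_k^M-\tilde\Xb_k^M}\le C\gamma_1\kappa_1\kappa_*\exp\{-\gamma_2(k-j-1)\}$ with $j$ the largest past index; aggregating over a future block $(\Xb_{i_1}^M,\dots,\Xb_{i_l}^M)$ with $i_1\ge j+k$, where each summand is bounded by the first one and there are $l$ of them divided by $l$, produces the stated bound $\tau(k;\{\Xb_t^M\})\le C\gamma_1\kappa_1\kappa_*\exp\{-\gamma_2(k-1)\}$, and likewise for its centered version.

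The delicate point, which I expect to be the main obstacle, is the index bookkeeping for $\{\Zb_t^M\}$, since $\sigma(\Zb_t:t\le p)$ records $\bY_s$ for all $s\le p+m$. Applying Assumption \ref{A2} at $j=p+m$ makes $\tilde\bY_{i_s+m}$ fully independent of the past but leaves $\bY_{i_s}$ coupled unless $i_s>p+m$, i.e.\ unless $k>m$; this forces a split into two regimes. When $k>m$, the slower-decaying term has effective gap $i_s-(p+m)-1\ge k-m-1$, giving decay $\exp\{-\gamma_2(k-m-1)\}=\exp\{\gamma_2 m\}\exp\{-\gamma_2(k-1)\}$ with prefactor $\gamma_1\kappa_1\kappa_*$. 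When $k\le m$, no decoupling of $\bY_{i_s}$ is available, so I fall back to the crude coupling by an independent copy of the whole block, bounding each term by $2\E\norm{\Zb_{i_s}^M-\E\Zb_{i_s}^M}\le C\,\E[\norm{\bY_{i_s}}_2\norm{\bY_{i_s+m}}_2]\le C\kappa_*^2$; because for $k\le m$ the target factor $\exp\{\gamma_2\min(k,m)\}\exp\{-\gamma_2(k-1)\}=\exp\{\gamma_2\}$ is a constant, this matches. Combining the two regimes yields the $\min(k,m)$ in the exponent and the $\max(\gamma_1\kappa_1\kappa_*,\kappa_*^2)$ prefactor, and verifying that the $k\le m$ regime costs only this bounded factor, rather than something growing in $m$, is the crux of the estimate.
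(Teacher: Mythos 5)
Your proposal is correct, and its skeleton---bounding $\tau$ by a coupling cost via the ``$\le$'' direction of the Dedecker--Prieur characterization (the paper's Lemma \ref{lem:tauup}), pushing the coupled sequence from Assumption \ref{A2} through the truncation map, splitting $\bY_k\bY_{k+m}^\T-\tilde\bY_k\tilde\bY_{k+m}^\T$ into two rank-one terms, and separating a size factor from a coupling factor by H\"older with exponents $(1+\epsilon)/\epsilon$ and $1+\epsilon$---is exactly the paper's. The genuine difference is how the truncation is handled. The paper proves that on these rank-one matrices truncation is an exact contraction, $\norm{\Xb_t^M-\tilde\Xb_t^M}\le\norm{\Xb_t-\tilde\Xb_t}$, via a bespoke monotonicity result (Lemma \ref{lem:trun}) whose proof requires an explicit singular-value computation and a three-case analysis; you instead invoke the generic fact that the radial retraction onto a norm ball is $2$-Lipschitz in any normed space. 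This costs a factor $2$, which is harmless since the constants $C,C'$ are generic, and buys a substantially shorter and more general argument that bypasses Lemma \ref{lem:trun} entirely (do include the two-line verification of the $2$-Lipschitz bound, and note that your ``$L_2$-truncation'' of $\bY_k$ should be at radius $\sqrt{M}$, not $M$, so that $\Xb_k^M=\bY_k^{\sqrt M}(\bY_k^{\sqrt M})^\T$). Conversely, on the lag-$m$ case your write-up is more complete than the paper's: the paper dismisses $\{\Zb_t^M\}$ and its centered version with ``similar arguments apply,'' whereas you carry out the index bookkeeping---$\sigma(\{\Zb_t\}_{t\le a})\subseteq\sigma(\{\bY_t\}_{t\le a+m})$, coupling at $j=a+m$ when $k>m$, and the crude bound $\tau\le 2\E\norm{\Zb^M-\E\Zb^M}\le C\kappa_*^2$ when $k\le m$---which is precisely where the factors $\exp\{\gamma_2\min(k,m)\}$ and $\max(\gamma_1\kappa_1\kappa_*,\kappa_*^2)$ in the statement originate.
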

	
	\begin{lemma}[Proof in Section \ref{subsec:aux}] \label{lem:nu}
		Let $\{\bY_t\}_{t \in \Z}$ be a sequence of random vectors under Assumptions \ref{A1}-\ref{A3}.  Take $M \geq C
		\gamma_1\kappa_1\kappa_*$ for some constant $C >0$ only depending on $\epsilon$. Then we obtain
		\begin{align*}
		&\nu^2_{\Xb^M} \leq  C'\frac{  \kappa_1^2 \{  \kappa_1^2 + \kappa_1\kappa_*\gamma_1 + \kappa_*^2(\gamma_3 + 2)\}  }{1-\exp\{-\min( \frac{5+\epsilon}{6\epsilon+10}\gamma_2, \gamma_4 )\}},\\
		&\nu^2_{\Zb^M} \leq  C''\frac{  \kappa_1^2 \{  (2m+1)\kappa_1^2 + \kappa_1\kappa_*\gamma_1 + \kappa_*^2(\gamma_3 + 2m+2)\}  }{1-\exp\{-\min( \frac{5+\epsilon}{6\epsilon+10}\gamma_2, \gamma_4 )\}}
		\end{align*}
		for some constants $C', C''>0$ only depending on $\epsilon$.
	\end{lemma}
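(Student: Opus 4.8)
The plan is to control both variance proxies by expanding the matrix square, separating the equal-time (diagonal) contribution from the cross-time one, and showing that the latter decays geometrically through the coupling Assumptions \ref{A2}--\ref{A3}. I will describe the argument for $\nu_{\Xb^M}^2$ in detail; the $\Zb^M$ case is analogous modulo the bookkeeping forced by non-symmetry and by the temporal overlap of the lag-$m$ blocks. Writing $Z_i := \Xb_i^M - \E\Xb_i^M$ (a symmetric matrix) and $\bm{w}_i := Z_i\bu$, the identity $\lambda_{\max}\{\E(\sum_{i\in K}Z_i)^2\} = \sup_{\bu\in\mathbb{S}^{p-1}}\E\norm{\sum_{i\in K}Z_i\bu}_2^2$ reduces the problem to bounding $\sum_{i,j\in K}\E[\bm{w}_i^\T\bm{w}_j]$ uniformly in $\bu$. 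For the diagonal terms I use $\norm{\Xb_i^M\bu}_2\leq|\bu^\T\bY_i|\,\norm{\bY_i}_2$ (which follows from $\Xb_i^M=c_i\bY_i\bY_i^\T$ with $c_i:=\min\{1,M/\norm{\bY_i}_2^2\}\leq1$); Cauchy--Schwarz together with Assumption \ref{A1} in the fixed direction $\bu$ (contributing $\kappa_1$) and Lemma \ref{lem:subgautail} for $\norm{\bY_i}_2$ (contributing $\kappa_*$) gives $\E\norm{\bm{w}_i}_2^2\leq C\kappa_1^2\kappa_*^2$, which after division by $\card(K)$ produces the $\kappa_1^2\kappa_*^2$ (i.e.\ the ``$+2$'') term.

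For a cross term with $i<j$, I invoke the coupling: replace $\bY_j$ by the copy $\tilde\bY_j$ that is independent of $\sigma(\{\bY_t\}_{t\leq i})$, hence of $\bm{w}_i$. Since $\E\bm{w}_i=0$ and $\tilde{\bm{w}}_j:=(\tilde\Xb_j^M-\E\Xb_j^M)\bu$ is independent of $\bm{w}_i$ with $\E\tilde{\bm{w}}_j=\E\bm{w}_j=0$, we have $\E[\bm{w}_i^\T\bm{w}_j]=\E[\bm{w}_i^\T(\bm{w}_j-\tilde{\bm{w}}_j)]$, reducing everything to the coupling error $(\Xb_j^M-\tilde\Xb_j^M)\bu$. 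The crucial algebraic step is the rank-two splitting (shown here without truncation for clarity)
\begin{align*}
(\bY_j\bY_j^\T-\tilde\bY_j\tilde\bY_j^\T)\bu=(\bu^\T\bY_j)(\bY_j-\tilde\bY_j)+\big(\bu^\T(\bY_j-\tilde\bY_j)\big)\tilde\bY_j,
\end{align*}
in which the first summand carries the \emph{full} increment $\bY_j-\tilde\bY_j$ (bounded in $L(1+\epsilon)$ by $\gamma_1\kappa_1 e^{-\gamma_2(j-i-1)}$ via Assumption \ref{A2}) times the fixed-direction factor $\bu^\T\bY_j$ ($\sim\kappa_1$), whereas the second carries the fixed-direction increment $\bu^\T(\bY_j-\tilde\bY_j)$ (bounded by $\gamma_3\kappa_1 e^{-\gamma_4(j-i-1)}$ via Assumption \ref{A3}) times the full-norm factor $\norm{\tilde\bY_j}_2$ ($\sim\kappa_*$). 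Applying Hölder's inequality with the $L(1+\epsilon)$ coupling factor isolated and the remaining sub-Gaussian factors (including $\norm{\bm{w}_i}_2\sim\kappa_1\kappa_*$) placed in complementary $L(q)$ spaces whose norms are $O(\kappa)$ with $\epsilon$-dependent constants, yields per-lag bounds of order $\kappa_1^3\kappa_*\gamma_1 e^{-c\gamma_2(j-i-1)}$ and $\kappa_1^2\kappa_*^2\gamma_3 e^{-\gamma_4(j-i-1)}$; summing the resulting geometric series over $j$ produces exactly the $\kappa_1^3\kappa_*\gamma_1$ and $\kappa_1^2\kappa_*^2\gamma_3$ numerator terms divided by $1-\exp\{-\min(\cdots,\gamma_4)\}$.

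The main obstacle is the truncation: the scalars $c_j,\tilde c_j$ do not cancel in $\Xb_j^M-\tilde\Xb_j^M$, leaving an extra term $(c_j-\tilde c_j)\tilde\bY_j\tilde\bY_j^\T$. Bounding $|c_j-\tilde c_j|$ in terms of $\norm{\bY_j-\tilde\bY_j}_2$ forces control of a moment of the increment strictly higher than $1+\epsilon$, which Assumption \ref{A2} does not supply directly; I would obtain it by interpolating (log-convexity of $L(q)$ norms) between the $L(1+\epsilon)$ coupling bound and a crude higher moment controlled by sub-Gaussianity together with the hypothesis $M\geq C\gamma_1\kappa_1\kappa_*$. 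This interpolation is precisely what degrades the exponential rate from $\gamma_2$ to $\tfrac{5+\epsilon}{6\epsilon+10}\gamma_2$, and fixing the exponents so that every sub-Gaussian/sub-exponential moment stays $O(\kappa)$ with constants depending only on $\epsilon$ is the delicate accounting I expect to occupy most of the work.

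Finally, the $\Zb^M$ statement follows the same route after writing the bilinear form $\sup_{\bu,\bv}\sum_{i,j}\E[(Z_i^\T\bu)^\T(Z_j\bv)]$ with $Z_i:=\Zb_i^M-\E\Zb_i^M$: the non-symmetry merely means I track the two projections $Z_i^\T\bu$ and $Z_j\bv$ separately, each estimated exactly as above using $\Zb_i^M=c_i\bY_i\bY_{i+m}^\T$. The only genuinely new feature is that lag-$m$ blocks share time indices whenever $|i-j|\leq m$, so the coupling is unavailable there; I handle these $O(m)$ overlapping pairs directly by Cauchy--Schwarz at cost $\kappa_1^2\kappa_*^2$ each, which accounts for the $(2m+1)$ and $(2m+2)$ prefactors, while the non-overlapping pairs $|i-j|>m$ are coupled across the gap exactly as in the $\Xb^M$ analysis.
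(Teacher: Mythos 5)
Your architecture is exactly the paper's: the reduction to $\sup_{\bu}\sum_{i,j}\E[\bm{w}_i^\T\bm{w}_j]$, the replacement of $\bm{w}_j$ by the coupling error $\bm{w}_j-\tilde{\bm{w}}_j$ (legitimate since $\E\bm{w}_i=\zero$ and $\tilde{\bm{w}}_j$ is independent of $\bm{w}_i$), the rank-two splitting that routes one increment through \ref{A2} against $\bu^\T\bY_j\sim\kappa_1$ and the other through \ref{A3} against $\norm{\tilde{\bY}_j}_2\sim\kappa_*$, and the overlap/non-overlap dichotomy in the lag-$m$ case are all the paper's own steps, with the right $\kappa$-bookkeeping. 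The genuine problem is the step you yourself single out: the truncation scalars. You propose to obtain a moment of order $q>1+\epsilon$ of $\norm{\bY_j-\tilde{\bY}_j}_2$ by log-convexity between the $L(1+\epsilon)$ coupling bound and a crude sub-Gaussian moment. That interpolation gives
\begin{align*}
\norm{\norm{\bY_j-\tilde{\bY}_j}_2}_{L(q)}\;\leq\;\{\gamma_1\kappa_1e^{-\gamma_2(j-i-1)}\}^{\theta}\,(C\kappa_*)^{1-\theta},\qquad \theta=\theta(q,\epsilon)<1,
\end{align*}
and this is where the argument breaks. In the $(\zeta_j-\tilde{\zeta}_j)$ term the remaining H\"older factors contribute at least $C\kappa_1^2\kappa_*^3/M$ (from $\norm{\bm{w}_i}_2$, from $\norm{\tilde{\bY}_j}_2|\tilde{\bY}_j^\T\bu|$, and from the factor $\norm{\bY_j}_2+\norm{\tilde{\bY}_j}_2$ needed to convert $\bigl|\norm{\Xb_j}-\norm{\tilde{\Xb}_j}\bigr|$ into $\norm{\bY_j-\tilde{\bY}_j}_2$), so after invoking $M\geq C\gamma_1\kappa_1\kappa_*$ you are left with $C\kappa_1^2\kappa_*^2\{\kappa_*/(\gamma_1\kappa_1)\}^{1-\theta}e^{-\theta\gamma_2(j-i-1)}$. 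The factor $\{\kappa_*/(\gamma_1\kappa_1)\}^{1-\theta}$ appears nowhere in the lemma and is not bounded under \ref{A1}--\ref{A3}: for a stationary Gaussian VAR(1) with coefficient $\rho\Ib_p$, the natural coupling has $\gamma_1\kappa_1\asymp\rho\kappa_*$, so this factor is $\asymp\rho^{-(1-\theta)}$ and explodes in the fast-mixing regime $\rho\to0$, while the lemma's numerator stays of order $\kappa_1^2\kappa_*^2$. So the interpolation, aimed at the increments, cannot deliver the stated bound.

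The repair is to aim the crude-bound/first-moment trade-off at the truncation scalars rather than at the increments; that is precisely what the paper's buffer-plus-Markov case analysis of $\norm{\zeta_k-\tilde{\zeta}_k}_{L(5(1+\epsilon)/(5+\epsilon))}$ does, and it is where $\frac{5+\epsilon}{6\epsilon+10}\gamma_2$ comes from. The cleanest route: the map $a\mapsto(M\wedge a)/a$ is $(1/M)$-Lipschitz on $(0,\infty)$, so pointwise $|\zeta_j-\tilde{\zeta}_j|\leq\norm{\Xb_j-\tilde{\Xb}_j}/M\leq(\norm{\bY_j}_2+\norm{\tilde{\bY}_j}_2)\norm{\bY_j-\tilde{\bY}_j}_2/M$. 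From here you can either (i) apply H\"older with the increment in exactly $L(1+\epsilon)$ --- your premise that a moment strictly above $1+\epsilon$ is ``forced'' is simply not true --- which yields $C\kappa_1^3\kappa_*^3\gamma_1e^{-\gamma_2(j-i-1)}/M\leq C\kappa_1^2\kappa_*^2e^{-\gamma_2(j-i-1)}$ with no rate degradation at all; or (ii) keep your interpolation idea but apply it to the scalar itself, using $|\zeta_j-\tilde{\zeta}_j|\leq1$ and $\norm{\zeta_j-\tilde{\zeta}_j}_{L(r)}\leq(\E|\zeta_j-\tilde{\zeta}_j|)^{1/r}\leq\{C e^{-\gamma_2(j-i-1)}\}^{1/r}$, which is dimensionless because $M\geq C\gamma_1\kappa_1\kappa_*$ cancels the prefactor of $\E\norm{\Xb_j-\tilde{\Xb}_j}$ supplied by Lemma \ref{lem:tau}; then only the exponential rate degrades (by $1/r$, and any fixed $r\leq 2$ already beats $\frac{5+\epsilon}{6\epsilon+10}$), never the $\kappa$-prefactor. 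With either repair, the rest of your plan, including the lag-$m$ overlap accounting, goes through and proves the lemma.
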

	
	Therefore, by applying Theorems \ref{thm:bern}, Lemma \ref{lem:tau}, and Lemma \ref{lem:nu} with the chosen $M_\delta$, we obtain
	for any $x > 0$,
	\begin{align}\label{lambdamax}
	&\P\bigg[\lambda_{\max}\bigg\{\frac{1}{n}\sum_{i = 1}^{n} (\Xb_i^{M_\delta} - \E \Xb_i^{M_\delta})\bigg\} \geq x +\sqrt{\delta/n}\bigg] \leq p\exp\bigg(-\frac{n^2x^2}{A_1n + A_2M_\delta^2 + A_3nxM_{\delta}} \bigg),
	\end{align}
	where 
	\begin{align*}
	&A_1 := \frac{ C\{
		\kappa_*\gamma_1 + \kappa_*^2(\gamma_3 + 2) + 1
		\}}{1-\exp\{-\min( \frac{5+\epsilon}{6\epsilon+10}\gamma_2, \gamma_4 )\}},\ \ A_2 := \frac{453^2}{\gamma_2},\ \mbox{and}\ A_3 := \frac{2\log n}{\log 2}\max\bigg\{1, \frac{48\log (np)}{\gamma_2} \bigg\}
	\end{align*}
	for some constant $C>0$ only depending on $\epsilon$.
	
	Similarly, notice that $\lambda_{\min}(\sum_{j = 1}^n \Xb_j^{M_\delta}) = \lambda_{\max}(-\sum_{j = 1}^n \Xb_j^{M_\delta})$. Hence the same argument renders the same upper bound
	\begin{align}\label{lambdamin}
	&\P\bigg[\lambda_{\min}\bigg\{\frac{1}{n}\sum_{i = 1}^{n} (\Xb_i^{M_\delta} - \E \Xb_i^{M_\delta})\bigg\} \leq -(x +\sqrt{\delta/n})\bigg]
	\leq p\exp\bigg( -\frac{n^2x^2}{A_1n + A_2M_\delta^2 + A_3nxM_{\delta}} \bigg)
	\end{align}
	with the same constants as above.
	
	For the last term of (\ref{eq:taildecomp}), with the choice of $M_\delta$ and Lemma \ref{lem:subgautail}, we obtain
	\begin{align}\label{sumtail}
	\sum_{i = 1}^{n}\P(\Xb_i \neq \Xb_i^{M_\delta})  = \sum_{i = 1}^{n}\P(\norm{\Xb_i} > M_\delta) \leq \delta.
	\end{align} 
	
	Combining (\ref{lambdamax}), (\ref{lambdamin}), and (\ref{sumtail}), we obtain
	\begin{align*}
	&	\P(\norm{\hat{\mathbf{\Sigma}}_0 - \E\hat{\mathbf{\Sigma}}_0} \geq x +\sqrt{\delta/n})
	\leq 2p\exp\bigg(-\frac{n^2x^2}{A_1n + A_2M_\delta^2 + A_3nxM_{\delta}} \bigg)+ \delta
	\end{align*}
	with the constants $A_1, A_2, A_3$ defined above.
	
	\textbf{Case II:} Now we consider the case when $0<m \leq n-2$. Since $\Zb_t := \bY_t\bY_{t + m}^\T$ is not symmetric for all $t \in\Z$, by applying matrix dilation (See \cite{tropp2015introduction}, Section 2.1.16 for more details), we define the symmetric version of $\Zb_t^M$ as
	$$\overline{\Zb}_t^M :=\begin{bmatrix}
	\mathbf{0} &\Zb_t^M \\
	(\Zb_t^M)^\T& \mathbf{0}\\
	\end{bmatrix}.$$
	Observe that $\lambda_{\max}(\overline{\Zb}_t^M ) = \norm{\overline{\Zb}_t^M} = \norm{\Zb_t^M}$.
	By Lemma \ref{lem:tau}, $\{\overline{\Zb}_t^M\}_{t \in \Z}$ and  $\{\overline{\Zb}_t^M - \E\overline{\Zb}_t^M \}_{t \in \Z}$ are also sequences of $\tau$-mixing random matrices. Define $$\nu^2_{\overline{\Zb}^M } := \sup_{K \subseteq \{1, \dots, n-m\}}\frac{1}{\card{(K)}}\lambda_{\max}\bigg\{\E\bigg(\sum_{i\in K}\overline{\Zb}_i^M  - \E\overline{\Zb}_i^M \bigg)^2\bigg\}.$$
	Notice that $\nu^2_{\overline{\Zb}^M}$ and $\nu^2_{\Zb^M}$ have the same upper bound since spectral norm of block diagonal matrix is less than or equal to the spectral norm of each block.
	
	Now we apply similar arguments in Case I to $\{\overline{\Zb}_t\}_{t\in \Z}$ and $\{\overline{\Zb}_t^M\}_{t \in \Z}$.
	\begin{align*}
	&\P\bigg\{\frac{1}{n-m}\bigg\lVert\sum_{i = 1}^{n-m} (\Zb_i - \E \Zb_i)\bigg\rVert \geq x \bigg\} \\
	\leq &\P\bigg[\lambda_{\max}\bigg\{\sum_{i = 1}^{n-m} (\overline{\Zb}_i^M - \E \overline{\Zb}_i^M)\bigg\} \geq (n-m)x-\sum_{i = 1}^{n-m}\norm{\E\overline{\Zb}_i - \E \overline{\Zb}_i^M} \bigg] +  \sum_{i = 1}^{n-m}\P\Big(\Zb_i \neq \Zb_i^{M} \Big).
	\end{align*}
	The rest is straightforward by using Theorem \ref{thm:bern}, Lemma \ref{lem:subgautail}, Lemma \ref{lem:tau}, and Lemma \ref{lem:nu}, and we thus finish the rest of the proof. \\
	
	Lastly, we consider $\kappa_1\neq1$. Notice that for any sequence $\{\bY_t\}_{t \in \Z}$ satisfying Assumptions $\ref{A1}$-$\ref{A3}$, the sequence $\{\bY_t/\kappa_1\}_{t \in \Z}$ will satisfy Assumptions \ref{A1} automatically and Assumptions \ref{A2}-\ref{A3} with $\kappa_1=1$. Hence, applying the above to $\{\bY_t/\kappa_1\}_{t \in \Z}$ renders the results. This completes the proof of Proposition \ref{prop:tailbd}.
\end{proof}

\subsection{Proof of Theorem \ref{thm: gaummb}}\label{subsec:thm2.2}
\begin{proof}
The proof of Theorem \ref{thm: gaummb} consists of two cases.

\textbf{Case I.} When $m = 0$, we first state a more general result of Gaussian process. Proposition \ref{prop:gau} considers a general Gaussian process without further assumptions on the covariance and autocovariance matrices. The proof modifies  that of Theorem 5.1 in \cite{van2017structured} with dependence among observations taken into account.

\begin{proposition}[Proof in Section \ref{subsec:thm2.2}] \label{prop:gau}
	Let $\{\bY_t\}_{t \in \Z}$ be a stationary sequence of mean-zero Gaussian random vectors with autocovariance matrices $\mathbf{\Sigma}_m$ for $0 \leq m \leq n-1$. Then 
\begin{align*}
\E\norm{\hat{\mathbf{\Sigma}}_0- \mathbf{\Sigma}_0} \leq &\frac{2}{n}\bigg\{2 \Big(\norm{\mathbf{\Sigma}_0}_* + 2\sum_{m = 1}^{n-1} \norm{\mathbf{\Sigma}_m}_*\Big) +\sqrt{2n\norm{\mathbf{\Sigma}_0}\Big(\norm{\mathbf{\Sigma}_0}_* + 2\sum_{m = 1}^{n-1} \norm{\mathbf{\Sigma}_m}_*\Big)} \\
&+\sqrt{2n\Big(\norm{\mathbf{\Sigma}_0} + 2\sum_{m = 1}^{n-1} \norm{\mathbf{\Sigma}_m}\Big)\tr(\mathbf{\Sigma}_0)}\bigg\},
\end{align*}
where $\norm{\cdot}_*$ is the matrix nuclear norm.
\end{proposition}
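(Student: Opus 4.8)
The plan is to combine the variational representation of the spectral norm with a Gaussian comparison (Slepian--Fernique / Chevet--Gordon) argument, following the strategy of Theorem 5.1 in \cite{van2017structured}, but accounting for the temporal dependence through the block covariance of the stacked Gaussian vector $(\bY_1^\T,\dots,\bY_n^\T)^\T$. First I would reduce to a supremum of a centered Gaussian chaos: since $\hat{\bSigma}_0-\bSigma_0$ is symmetric, $\norm{\hat{\bSigma}_0-\bSigma_0}=\sup_{\bv\in\mathbb{S}^{p-1}}|a_\bv^2-b_\bv^2|$, where $a_\bv:=(n^{-1}\sum_{i=1}^n(\bv^\T\bY_i)^2)^{1/2}$ and $b_\bv:=(\bv^\T\bSigma_0\bv)^{1/2}$, and stationarity with mean-zeroness gives $\E a_\bv^2=b_\bv^2$. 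I would handle the upper deviation $\sup_\bv(a_\bv^2-b_\bv^2)$ and the lower deviation $\sup_\bv(b_\bv^2-a_\bv^2)$ separately. The device that makes the quadratic tractable is linearization: writing $a_\bv^2=\sup_{\bs\in\R^n}(\tfrac{2}{\sqrt n}\sum_i s_i\,\bv^\T\bY_i-\norm{\bs}_2^2)$, the upper deviation becomes a \emph{penalized} supremum of the bilinear Gaussian process $G_{\bv,\bs}:=\tfrac{2}{\sqrt n}\sum_{i=1}^n s_i\,\bv^\T\bY_i$, namely $\sup_{\bv,\bs}\{G_{\bv,\bs}-\norm{\bs}_2^2-b_\bv^2\}$; the lower deviation becomes the corresponding $\sup_\bv\inf_\bs$.

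The engine is a Gaussian comparison inequality applied to $G_{\bv,\bs}$ with the deterministic penalty $\phi(\bv,\bs)=\norm{\bs}_2^2+b_\bv^2$ left untouched. Because subtracting a common deterministic function preserves both means and increment variances, Slepian--Fernique gives $\E\sup_{\bv,\bs}(G_{\bv,\bs}-\phi)\le\E\sup_{\bv,\bs}(H_{\bv,\bs}-\phi)$ for any comparison process $H$ whose increments dominate those of $G$ (and Gordon's min--max version handles $\sup_\bv\inf_\bs$ for the lower deviation). I would take $H$ of Chevet--Gordon form, a sum of a process indexed by $\bv$ carrying the spatial geometry of $\bSigma_0$ (hence the width $\E\norm{\bSigma_0^{1/2}\mathbf g}_2\le\sqrt{\tr(\bSigma_0)}$ and radius $\sqrt{\norm{\bSigma_0}}$) and a process indexed by $\bs$ carrying the temporal geometry. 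The penalized supremum of the decoupled $H$ then factorizes, and optimizing the $\bs$-scale against $\norm{\bs}_2^2$ and the $\bv$-scale against $b_\bv^2$ by completing the square produces the three terms: the constant $2S_*$, the cross term $\sqrt{2n\norm{\bSigma_0}S_*}$, and the cross term $\sqrt{2nS\,\tr(\bSigma_0)}$, where $S:=\norm{\bSigma_0}+2\sum_{m=1}^{n-1}\norm{\bSigma_m}$ and $S_*:=\norm{\bSigma_0}_*+2\sum_{m=1}^{n-1}\norm{\bSigma_m}_*$.

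The dependence, and hence $S$ and $S_*$, enters through the increment structure of $G$. Indeed $\E(G_{\bv,\bs}-G_{\bv,\bs'})^2=\tfrac4n(\bs-\bs')^\T\mathbf R_\bv(\bs-\bs')$, where $\mathbf R_\bv=(\bv^\T\bSigma_{j-i}\bv)_{i,j}$ is an $n\times n$ Toeplitz matrix with $\norm{\mathbf R_\bv}\le\sum_m|\bv^\T\bSigma_m\bv|\le\sum_m\norm{\bSigma_m}=S$, which controls the $\bs$-direction radius; the $\bs$-direction width is governed by the lagwise nuclear norms, via $\sum_{i,j}\norm{\bSigma_{j-i}}_*\le nS_*$, which is how $S_*$ appears. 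For independent columns one has $\mathbf R_\bv=(\bv^\T\bSigma_0\bv)\mathbf I_n$, so $S=\norm{\bSigma_0}$ and $S_*=\norm{\bSigma_0}_*=\tr(\bSigma_0)$, and the two cross terms coincide; one thereby recovers exactly van Handel's independent-case bound, a useful consistency check on the constants.

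The main obstacle is precisely this last point. For independent observations the stacked covariance is the tensor product $\mathbf I_n\otimes\bSigma_0$, for which Chevet's inequality applies verbatim and cleanly separates the $\bv$- and $\bs$-geometries; under dependence the stacked covariance $\mathbf\Gamma=(\bSigma_{j-i})_{i,j}$ is block Toeplitz and is \emph{not} a tensor product, so no off-the-shelf Chevet bound is available and one must build the comparison process $H$ by hand and verify increment domination $\E(G-G')^2\le\E(H-H')^2$ for all pairs. The delicate part is to do this so that the off-diagonal lags $\bSigma_m$, $m\ge1$, contribute only through summed operator norms (for the radii) and summed nuclear norms (for the widths) -- that is, through $S$ and $S_*$ rather than through dimension-dependent quantities such as $p\sum_m\norm{\bSigma_m}$. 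This is what keeps the final bound free of dimensional and logarithmic factors and matches the sharp independent-case rate, and establishing the comparison with exactly the constants $\norm{\mathbf R_\bv}\le S$ and $\sum_{i,j}\norm{\bSigma_{j-i}}_*\le nS_*$ is where the real work lies.
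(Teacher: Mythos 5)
Your route is genuinely different from the paper's, but as written it contains two gaps, and at least one of them is serious. The paper never linearizes the quadratic form: it starts from the Gaussian decoupling observation (Lemma 5.2 of \cite{van2017structured}, which the authors note survives dependence across columns), namely $\E\norm{\hat{\bSigma}_0-\bSigma_0}\le \tfrac{2}{n}\E\norm{\Yb\tilde{\Yb}^{\T}}$ with $\tilde{\Yb}$ an independent copy of $\Yb=(\bY_1\dots\bY_n)$, and then applies Slepian--Fernique twice: once conditionally on $\tilde{\Yb}$ to the bilinear process $\sum_k\bu^{\T}\bY_k\tilde{\bY}_k^{\T}\bv$ (Lemma \ref{lem:gau1}), and once more to bound $\E\norm{\Yb}$ (Lemma \ref{lem:gau2}). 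Because the decoupled quantity is a plain supremum, no min--max comparison is ever needed. Your scheme, by contrast, must control the lower deviation $\sup_{\bv}(b_\bv^2-a_\bv^2)$ through Gordon's theorem, and that step is unjustified: Gordon's min--max comparison does not follow from one-sided increment domination, but requires two-sided conditions (increments sharing the outer index $\bv$ dominated in one direction, increments across distinct $\bv$'s dominated in the other). In the i.i.d.\ CGMT setting these are verified by exact computation using the tensor-product covariance $\Ib_n\otimes\bSigma_0$; for the block-Toeplitz covariance $(\bSigma_{j-i})_{i,j}$ you neither exhibit a comparison process satisfying both inequalities nor give any reason one should exist with the constants $S$ and $S_*$ you want. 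Writing that Gordon's theorem "handles" this case is precisely the missing proof, and it is the step the paper's architecture is designed to avoid.

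Second, the mechanism by which nuclear norms enter is absent. The inequality $\sum_{i,j}\norm{\bSigma_{j-i}}_*\le nS_*$ (with $S_*:=\norm{\bSigma_0}_*+2\sum_{m=1}^{n-1}\norm{\bSigma_m}_*$) is true but inert: it does not, by itself, produce a Gaussian process whose expected supremum is of order $\sqrt{S_*}$. The paper's device --- really the heart of Lemmas \ref{lem:gau1} and \ref{lem:gau2} --- is to replace each non-symmetric, non-PSD lag matrix $\bSigma_d$ by the PSD surrogate $\tilde{\bSigma}_d:=(\Ub_d\mathbf{\Lambda}_d\Ub_d^{\T}+\Vb_d\mathbf{\Lambda}_d\Vb_d^{\T})/2$, which satisfies $\bSigma_d+\bSigma_d^{\T}\preceq 2\tilde{\bSigma}_d$ in Loewner order and $\tr(\tilde{\bSigma}_d)=\norm{\bSigma_d}_*$, so that all $\bu$-direction increments are dominated by the quadratic form of $\bSigma_0+2\sum_d\tilde{\bSigma}_d$, whose trace is exactly $S_*$. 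Your sketch correctly identifies the temporal radius ($\norm{\mathbf{R}_\bv}\le S$ parallels the paper's Loewner bound on the block-Toeplitz matrix in Lemma \ref{lem:gau2}), but it explicitly defers the construction that makes the widths come out in nuclear norms ("where the real work lies"). So the two steps that actually prove the proposition --- the comparison-process construction and the treatment of the lower deviation --- are both left open; the first could plausibly be completed along the paper's lines, but the Gordon step is doubtful as stated.
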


The rest of the proof is to show the geometric decay of spectral norm and nuclear norm of autocovariance matrices under Assumptions \ref{A2}-\ref{A3} in order to apply Proposition \ref{prop:gau}. It is obvious that $\kappa_1^2 \asymp \norm{\mathbf{\Sigma}_0}$ and $\kappa_*^2 \asymp \tr(\mathbf{\Sigma}_0)$ when the process is a centered stationary Gaussian process. We first prove the geometric decay of spectral norm of autocovariance matrices. For any $0 \leq m \leq n-1$ and any integer $j$, by Assumption \ref{A3}, there exists $\tilde{\bY}_{1+m}$ that is identically distributed as $\bY_{1+m}$, independent of $\bY_{1}$, and 
$$\sup_{\bu \in \mathbb{S}^{p-1}} \norm{(\bY_{1+m}  - \tilde{\bY}_{1+m})^\T \bu}_{L(1+\epsilon)} \leq \gamma_3\sqrt{\norm{\mathbf{\Sigma}_0}}\exp\{-\gamma_4(m-1)\}.$$
Therefore,
\begin{align*}
\norm{\mathbf{\Sigma}_m} = &\norm{\E\bY_1\bY_{1+m}^\T} \\
=& \norm{\E\bY_1(\bY_{1+m} - \tilde{\bY}_{1+m} + \tilde{\bY}_{1+m})^\T} \\
=& \norm{\E\bY_1(\bY_{1+m} - \tilde{\bY}_{1+m})^\T}\\
\leq & \sup_{\bu, \bv \in \mathbb{S}^{p-1}} |\E \bu^\T\bY_1(\bY_{1+m} - \tilde{\bY}_{1+m})^\T \bv |\\
\leq & C\norm{\mathbf{\Sigma}_0}\exp\{-\gamma_4(m-1)\},
\end{align*}
where the last inequality is followed by Assumption \ref{A3} and $\gamma_3 = O(1)$ for some constant $C>0$ only depending on $\epsilon, \gamma_3$.

Similarly, by Assumption \ref{A2}, there exists $\tilde{\bY}_{1+m}$ that is identically distributed as $\bY_{1+m}$, independent of $\bY_{1}$, and 
$$\norm{\norm{\bY_{1+m}  - \tilde{\bY}_{1+m}}_2}_{L(1+\epsilon)} \leq \gamma_1\sqrt{\norm{\mathbf{\Sigma}_0}}\exp\{-\gamma_2(m-1)\}.$$
Then,
\begin{align*}
\norm{\mathbf{\Sigma}_m}_*  &= \sqrt{\tr(\mathbf{\Sigma}_m^\T \mathbf{\Sigma}_m)}\\
& = \sqrt{\tr\{\E(\bY_{1+m} - \tilde{\bY}_{1+m})\bY_1^\T \E\bY_1(\bY_{1+m} - \tilde{\bY}_{1+m})^\T   \}}\\
&\leq \sqrt{ \tr \{\E (\bY_{1+m} - \tilde{\bY}_{1+m})\bY_1^\T \bY_1(\bY_{1+m} - \tilde{\bY}_{1+m})^\T   \}   }\\
&= \sqrt{  \tr \{\E \bY_1^\T \bY_1(\bY_{1+m} - \tilde{\bY}_{1+m})(\bY_{1+m} - \tilde{\bY}_{1+m})^\T   \}  }\\
& = \sqrt{ \E \norm{\bY_1}_2^2\norm{\bY_{1+m} - \tilde{\bY}_{1+m}}_2^2  }\\
&\leq \norm{\norm{\bY_1}_2}_{L(\frac{1+\epsilon}{\epsilon})}\norm{\norm{\bY_{1+m} - \tilde{\bY}_{1+m}}_2}_{L(1+\epsilon)}\\
&\leq C \tr(\mathbf{\Sigma}_0)\exp\{-\gamma_2(m-1)\},
\end{align*}
where the third line is followed by the fact that $\E(\bY_{1+m} - \tilde{\bY}_{1+m})\bY_1^\T \E\bY_1(\bY_{1+m} - \tilde{\bY}_{1+m})^\T \preceq \E \bY_1^\T \bY_1(\bY_{1+m} - \tilde{\bY}_{1+m})(\bY_{1+m} - \tilde{\bY}_{1+m})^\T$ (``$\preceq$" is the Loewner partial order of Hermitian matrices), and both matrices are positive semi-definite, and the last line by Assumption \ref{A2} and $\gamma_1 = O(\sqrt{r(\bSigma_0)})$. Indeed, for any $\bu \in \R^p$, $\E\{\bu^\T (\bY_{1+m} - \tilde{\bY}_{1+m})\}^2(\bY_1^\T\bY_1) = \sum_{j = 1}^p \E \{\bu^\T (\bY_{1+m} - \tilde{\bY}_{1+m})\}^2\bY_{1,j}^2$ and $\E\{ \bu^\T(\bY_{1+m} - \tilde{\bY}_{1+m}) \}\bY_1^\T  \E\bY_1(\bY_{1+m} - \tilde{\bY}_{1+m})^\T\bu = \sum_{j = 1}^p [\E\{\bu^\T(\bY_{1+m} - \tilde{\bY}_{1+m}) \bY_{1,j} \} ]^2$. The result follows.

\textbf{Case II.}  When $m>0$, we denote $\bar{\bY}_i:= (\bY_i^\T\ \bY_{i+m}^\T)^\T$ for $1\leq i \leq n-m$. It is obvious that $\{\bar{\bY}_i\}$ is a centered stationary Gaussian process satisfying Assumptions \ref{A2}-\ref{A3}. Denote $\bar{\mathbf{\Sigma}}_0 := \E \bar{\bY}_i\bar{\bY}_i^\T$ and notice that $\mathbf{\Sigma}_m$ is the off-diagnal block submatrix of $\bar{\mathbf{\Sigma}}_0$. By Case I and the fact that spectral norm of submatrix is bounded above by that of the full matrix, we obtain
$$ \E\norm{\hat{\mathbf{\Sigma}}_m- \mathbf{\Sigma}_m} \leq C\norm{\bar{\mathbf{\Sigma}}_0}\Big(\sqrt{\frac{r(\bSigma_0)}{n-m}} + \frac{r(\bSigma_0)}{n-m} \Big).$$
Notice that $\norm{\bSigma_0}\leq \norm{\bar{\mathbf{\Sigma}}_0} \leq \norm{\mathbf{\Sigma}_0} + \norm{\mathbf{\Sigma}_m} \leq 2\norm{\mathbf{\Sigma}_0}$ since $\mathbf{\Sigma}_0-\mathbf{\Sigma}_m$ is positive semi-definite. This completes the proof.
\end{proof}

\begin{proof}[Proof of Proposition \ref{prop:gau}]
The proof heavily depends on the following observation. Denote $\Yb := (\bY_1 \dots \bY_n)$ and let $\tilde{\Yb}$ be an independent copy of $\Yb$. Then
$$\E\norm{\hat{\mathbf{\Sigma}}_0- \mathbf{\Sigma}_0} \leq \frac{2}{n} \E\norm{\Yb \tilde{\Yb}^{\T}}.$$
This is same as Lemma 5.2 in \cite{van2017structured} by noticing that the result holds without independence assumption. 

Now we state the following two core lemmas used to complete the proof.

\begin{lemma}[Proof in Section \ref{subsec:aux}] \label{lem:gau1} We have
		\begin{align*}
	\E\norm{\hat{\mathbf{\Sigma}}_0- \mathbf{\Sigma}_0} \leq \frac{2\sqrt{2}}{n}\bigg\{\E\norm{\Yb}  \cdot \sqrt{\tr\Big(\mathbf{\Sigma}_0 + 2\sum_{d = 1}^{n-1}\tilde{\mathbf{\Sigma}}_d\Big)} +\sqrt{2\Big(\norm{\mathbf{\Sigma}_0} + 2\sum_{d = 1}^{n-1} \norm{\mathbf{\Sigma}_d}\Big)}\cdot \sqrt{n\tr(\mathbf{\Sigma}_0)}\bigg\},
	\end{align*}
	where $\tilde{\mathbf{\Sigma}}_d := (\Ub_d \mathbf{\Lambda}_d \Ub_d^{\T} +  \Vb_d \mathbf{\Lambda}_d \Vb_d^{\T})/2$. Here $\Ub_d,  \Vb_d, \mathbf{\Lambda}_d$ are left singular vectors, right singular vectors, and singular values of $\mathbf{\Sigma}_d$ for all $1 \leq d\leq n-1$ respectively.
\end{lemma}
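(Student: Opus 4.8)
The plan is to reduce everything to a bound on $\E\norm{\Yb\tilde\Yb^\T}$, since the reduction $\E\norm{\hat{\mathbf{\Sigma}}_0-\mathbf{\Sigma}_0}\le \frac{2}{n}\E\norm{\Yb\tilde\Yb^\T}$ is already in hand. It therefore suffices to establish
\[
\E\norm{\Yb\tilde\Yb^\T}\le \sqrt{2}\,\E\norm{\Yb}\,\sqrt{\tr\Big(\mathbf{\Sigma}_0+2\sum_{d=1}^{n-1}\tilde{\mathbf{\Sigma}}_d\Big)}+2\sqrt{\Big(\norm{\mathbf{\Sigma}_0}+2\sum_{d=1}^{n-1}\norm{\mathbf{\Sigma}_d}\Big)\,n\,\tr(\mathbf{\Sigma}_0)},
\]
which combined with the reduction yields the stated inequality (the two $\sqrt{2}$ factors collapse exactly into the $\frac{2\sqrt 2}{n}$ prefactor). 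First I would write the operator norm variationally as a bilinear Gaussian form, $\norm{\Yb\tilde\Yb^\T}=\sup_{\bu,\bw\in\mathbb{S}^{p-1}}\bu^\T\Yb\tilde\Yb^\T\bw$, and observe that $\bu^\T\Yb\tilde\Yb^\T\bw=\sum_{i=1}^n(\bu^\T\bY_i)(\bw^\T\tilde\bY_i)$ is a second-order Gaussian chaos in the independent pair $(\Yb,\tilde\Yb)$.

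The core step, modeled on the proof of Theorem 5.1 in \cite{van2017structured}, is to condition on $\tilde\Yb$. This linearizes the chaos into a centered Gaussian process in $\Yb$ indexed by $(\bu,\bw)$, to which I would apply a Chevet-type Gaussian comparison (Sudakov--Fernique/Gordon). The output of such a comparison is precisely a two-term bound: one term pairing the operator norm $\E\norm{\Yb}$ of the non-conditioned matrix with the Gaussian complexity of $\{\tilde\Yb^\T\bw:\bw\in\mathbb{S}^{p-1}\}$, which upon taking expectation over $\tilde\Yb$ becomes the trace factor; and one term pairing the Frobenius-scale complexity $\sqrt{\E\norm{\Yb}_F^2}=\sqrt{n\tr(\mathbf{\Sigma}_0)}$ with the spectral radius of the dependence structure.

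The algebra that produces the two covariance quantities runs through the conditional increment variance. With $c_i:=\bw^\T\tilde\bY_i$ and $\E\bY_i\bY_j^\T=\mathbf{\Sigma}_{j-i}$, one has $\E_\Yb[(\bu^\T\Yb\tilde\Yb^\T\bw)^2\mid\tilde\Yb]=\sum_{i,j}c_ic_j\,\bu^\T\mathbf{\Sigma}_{j-i}\bu$, a quadratic form in the block-Toeplitz operator $\{\mathbf{\Sigma}_{j-i}\}_{i,j}$. I would bound the spectral radius of this operator by $\norm{\mathbf{\Sigma}_0}+2\sum_{d\ge 1}\norm{\mathbf{\Sigma}_d}$ (a block Gershgorin/banding argument), which supplies the spectral factor in the second term. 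For the first term I would instead take $\E_{\tilde\Yb}$ of $c_ic_j=\bw^\T\tilde\bY_i\tilde\bY_j^\T\bw$, giving $\bw^\T\mathbf{\Sigma}_{j-i}\bw$, and replace each non-symmetric cross-autocovariance $\mathbf{\Sigma}_d=\Ub_d\mathbf{\Lambda}_d\Vb_d^\T$ by its positive semidefinite symmetrization through the singular-value bound $\bu^\T\mathbf{\Sigma}_d\bv\le \tfrac12(\bu^\T\Ub_d\mathbf{\Lambda}_d\Ub_d^\T\bu+\bv^\T\Vb_d\mathbf{\Lambda}_d\Vb_d^\T\bv)$; this is exactly what turns $\mathbf{\Sigma}_d$ into $\tilde{\mathbf{\Sigma}}_d$ and produces the trace factor $\tr(\mathbf{\Sigma}_0+2\sum_d\tilde{\mathbf{\Sigma}}_d)$.

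The main obstacle is the temporal dependence: the coupled block-Toeplitz covariance $\{\mathbf{\Sigma}_{j-i}\}$ in the increment variance is exactly what blocks a direct application of Chevet's inequality, which presumes independent columns. The work lies in dominating this coupled quadratic form by decoupled surrogates while retaining a legitimate positive semidefinite comparison covariance for the Sudakov--Fernique step; this is where both the spectral sum $\norm{\mathbf{\Sigma}_0}+2\sum_d\norm{\mathbf{\Sigma}_d}$ and the symmetrized trace sum $\tr(\mathbf{\Sigma}_0)+2\sum_d\norm{\mathbf{\Sigma}_d}_*$ arise, and the asymmetry of the $\mathbf{\Sigma}_d$ forces the $\tilde{\mathbf{\Sigma}}_d$ surrogates. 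Once the comparison is set up with these decoupled variance proxies, taking expectation over $\tilde\Yb$ and collecting constants finishes the argument.
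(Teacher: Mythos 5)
Your route is the same as the paper's: reduce to $\frac{2}{n}\E\norm{\Yb\tilde\Yb^\T}$, condition on $\tilde\Yb$ so that $W_{\bu,\bv}=\sum_{k}\bu^\T\bY_k\tilde\bY_k^\T\bv$ becomes a genuine Gaussian process in $\Yb$, compare it with a decoupled two-part surrogate via Slepian--Fernique, and extract the two factors from the symmetrization $\mathbf{\Sigma}_d+\mathbf{\Sigma}_d^\T\preceq 2\tilde{\mathbf{\Sigma}}_d$ and a Gershgorin-type bound on the Toeplitz quadratic form. However, your mechanism for the trace term is a genuine gap. You propose to take $\E_{\tilde\Yb}$ of the coefficients $c_ic_j$ (where $c_i=\bv^\T\tilde\bY_i$, and $\E_{\tilde\Yb}$ denotes expectation conditionally on $\tilde\Yb$) inside the conditional increment variance. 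That is not a legitimate move: $W_{\bu,\bv}$ is Gaussian only conditionally on $\tilde\Yb$ (unconditionally it is a second-order chaos, to which Slepian--Fernique does not apply), so the comparison must be run at fixed $\tilde\Yb$, and the surrogate's increments must dominate those of $W$ for almost every realization of $\tilde\Yb$, not merely after averaging $\tilde\Yb$ out. The same tangle shows in your pairings: the Gaussian width of $\{\tilde\Yb^\T\bv:\bv\in\mathbb{S}^{p-1}\}$ equals $\E\norm{\tilde\Yb\bg'}_2\le\sqrt{\tr(\tilde\Yb\tilde\Yb^\T)}$, which after integrating over $\tilde\Yb$ gives the Frobenius factor $\sqrt{n\tr(\mathbf{\Sigma}_0)}$, not the trace factor; and an averaged, deterministic comparison covariance could never produce the factor $\E\norm{\Yb}$ appearing in the claimed bound --- your sketch never explains where that factor would come from.

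The repair, which is exactly the paper's proof, keeps all $\tilde\Yb$-randomness pointwise. For the $\bu$-increment, for every realization of $\tilde\Yb$ and every unit $\bv$,
\begin{align*}
\E_{\tilde\Yb}\Big\{\sum_{k=1}^{n}(\bu-\bu')^\T\bY_k\tilde\bY_k^\T\bv\Big\}^2
\le(\bu-\bu')^\T\Big(\mathbf{\Sigma}_0+2\sum_{d=1}^{n-1}\tilde{\mathbf{\Sigma}}_d\Big)(\bu-\bu')\cdot\norm{\tilde\Yb}^2,
\end{align*}
which follows from your symmetrization inequality together with the pointwise bounds $|\sum_{k-j=d}c_jc_k|\le\sum_j c_j^2=\norm{\bv^\T\tilde\Yb}_2^2\le\norm{\tilde\Yb}^2$; the $\bv$-increment is bounded by $\big(\norm{\mathbf{\Sigma}_0}+2\sum_{d=1}^{n-1}\norm{\mathbf{\Sigma}_d}\big)\norm{(\bv-\bv')^\T\tilde\Yb}_2^2$ via the Toeplitz bound. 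The surrogate is then
\begin{align*}
Y_{\bu,\bv}=\sqrt{2}\,\norm{\tilde\Yb}\,\bu^\T\Big(\mathbf{\Sigma}_0+2\sum_{d=1}^{n-1}\tilde{\mathbf{\Sigma}}_d\Big)^{1/2}\bg+\sqrt{2}\,\Big(\norm{\mathbf{\Sigma}_0}+2\sum_{d=1}^{n-1}\norm{\mathbf{\Sigma}_d}\Big)^{1/2}\bv^\T\tilde\Yb\,\bg',
\end{align*}
both parts of which are still $\tilde\Yb$-measurable, so the conditional comparison is valid. Its expected supremum yields $\norm{\tilde\Yb}\sqrt{\tr(\mathbf{\Sigma}_0+2\sum_d\tilde{\mathbf{\Sigma}}_d)}$ plus the spectral sum times $\sqrt{\tr(\tilde\Yb\tilde\Yb^\T)}$; only the final expectation over $\tilde\Yb$ (with Jensen on the second term) converts these random multipliers into $\E\norm{\Yb}$ and $\sqrt{n\tr(\mathbf{\Sigma}_0)}$. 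With this correction your outline coincides with the paper's argument; your symmetrization step and the block-Toeplitz spectral bound are precisely the ones the paper uses.
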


\begin{lemma}[Proof in Section \ref{subsec:aux}] \label{lem:gau2} We have
	\begin{align*} 
	\E\norm{\Yb}  \leq  \sqrt{2\tr\bigg(\mathbf{\Sigma}_0 + 2\sum_{d = 1}^{n-1}\tilde{\mathbf{\Sigma}}_d\bigg)} + \sqrt{2n\norm{\mathbf{\Sigma}_0} },
	\end{align*}
	where $\tilde{\mathbf{\Sigma}}_d$ for all $1 \leq d \leq n-1$ are defined in Lemma \ref{lem:gau1}.
\end{lemma}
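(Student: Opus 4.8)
The plan is to realize $\E\norm{\Yb}$ as the expected supremum of the centered Gaussian process $X_{\bu,\bv}:=\bu^\T\Yb\bv=\sum_{i=1}^n v_i\,\bu^\T\bY_i$ indexed by $(\bu,\bv)\in\mathbb{S}^{p-1}\times\mathbb{S}^{n-1}$, using $\norm{\Yb}=\sup_{\bu,\bv}\bu^\T\Yb\bv$, and to bound it by a Gaussian comparison (Slepian--Fernique / Chevet-type) argument, adapting the proof of Theorem 5.1 in \cite{van2017structured}. The covariance of this process is $\E X_{\bu,\bv}X_{\bu',\bv'}=\sum_{i,j}v_iv_j'\,\bu^\T\mathbf{\Sigma}_{j-i}\bu'$, and the whole difficulty is that, unlike the independent-column case, this does \emph{not} factor into a function of $(\bu,\bu')$ times a function of $(\bv,\bv')$; the matrices $\tilde{\mathbf{\Sigma}}_d$ are introduced precisely to tame this non-product structure. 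Throughout write $\mathbf{T}:=\mathbf{\Sigma}_0+2\sum_{d=1}^{n-1}\tilde{\mathbf{\Sigma}}_d$, so that $\tr\mathbf{T}=\tr\mathbf{\Sigma}_0+2\sum_{d=1}^{n-1}\norm{\mathbf{\Sigma}_d}_*$, matching the first term of the bound.

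First I would record the two structural facts that produce the two terms. For the row (``trace'') direction, writing $\mathbf{\Sigma}_d=\Ub_d\mathbf{\Lambda}_d\Vb_d^\T$ one has the Loewner bounds $\pm(\mathbf{\Sigma}_d+\mathbf{\Sigma}_d^\T)\preceq(\Ub_d\pm\Vb_d)\mathbf{\Lambda}_d(\Ub_d\pm\Vb_d)^\T\preceq 2\tilde{\mathbf{\Sigma}}_d$, since $\mathbf{\Lambda}_d\succeq\mathbf 0$. Consequently, for any $\bm{w}\in\R^n$, the covariance of $\Yb\bm{w}=\sum_i w_i\bY_i$ obeys the domination
\begin{align*}
\mathrm{Cov}(\Yb\bm{w})=\sum_{i,j}w_iw_j\,\mathbf{\Sigma}_{j-i}\preceq\norm{\bm{w}}_2^2\,\mathbf{T},
\end{align*}
because the autocorrelation coefficients $\sum_k w_kw_{k+d}$ are bounded in modulus by $\norm{\bm{w}}_2^2$. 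This lets the comparison in the $\bu$-direction be run against $\bu^\T\mathbf{T}^{1/2}\bm{g}$ with $\bm{g}\sim N(\mathbf 0,\Ib_p)$, whose supremum over $\bu$ is $\E\norm{\mathbf{T}^{1/2}\bm{g}}_2\le\sqrt{\tr\mathbf{T}}$. For the column direction, stationarity gives, for \emph{every fixed} $\bu$, the exact identity $\E\norm{\Yb^\T\bu}_2^2=\sum_{i=1}^n\E(\bu^\T\bY_i)^2=n\,\bu^\T\mathbf{\Sigma}_0\bu\le n\norm{\mathbf{\Sigma}_0}$; the key point is that the off-diagonal autocovariances $\mathbf{\Sigma}_d$ cancel in this trace, so the per-direction magnitude of $\norm{\Yb^\T\bu}_2$ is governed by $\mathbf{\Sigma}_0$ alone. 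These two facts are what separately explain the $\sqrt{\tr\mathbf{T}}$ and $\sqrt{n\norm{\mathbf{\Sigma}_0}}$ terms, and the factor $\sqrt 2$ in each should emerge from an elementary split $(a+b)^2\le 2a^2+2b^2$ used to decouple the two contributions of $X_{\bu,\bv}-X_{\bu',\bv'}=(\bu-\bu')^\T\Yb\bv+\bu'^\T\Yb(\bv-\bv')$.

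I expect the main obstacle to be reconciling these two directions, and in particular the column direction. The naive Chevet comparison that succeeds in the independent case---pairing the row field $\bu^\T\mathbf{T}^{1/2}\bm{g}$ with a column field whose increment is $\norm{\mathbf{\Sigma}_0^{1/2}\bu}^2\norm{\bv-\bv'}_2^2$---\emph{fails} under dependence: the true column increment is $\E\big(\bu^\T\Yb(\bv-\bv')\big)^2=(\bv-\bv')^\T\Gamma_{\bu}(\bv-\bv')$ with $(\Gamma_{\bu})_{ij}=\bu^\T\mathbf{\Sigma}_{j-i}\bu$, and the top eigenvalue of the Toeplitz form $\Gamma_{\bu}$ can far exceed $\norm{\mathbf{\Sigma}_0}$ when the series is strongly correlated, so no $\norm{\mathbf{\Sigma}_0}$-scaled column field can dominate the increments in the Slepian--Fernique sense. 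The delicate part of the argument is therefore to control the column direction \emph{not} by spectral-norm domination but through its second moment: the comparison must be arranged so that what governs the column term is the trace identity $\tr\Gamma_{\bu}=n\,\bu^\T\mathbf{\Sigma}_0\bu$ (which collapses to the diagonal and hence to $n\norm{\mathbf{\Sigma}_0}$) rather than $\lambda_{\max}(\Gamma_{\bu})$. Concretely I would either split $\norm{\Yb}\le\norm{\Yb^\T\bu_0}_2+\sup_{\bu,\bv}(\bu-\bu_0)^\T\Yb\bv$ and use the diagonal identity for the baseline while absorbing the $\bu$-fluctuation into the $\mathbf{T}$-comparison, or follow \cite{van2017structured} by performing the comparison inside the symmetrized quantity of Proposition~\ref{prop:gau} (as in Lemma~\ref{lem:gau1}), conditioning so that the column covariance enters only through its trace. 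Verifying that this column control is genuinely $O(\sqrt{n\norm{\mathbf{\Sigma}_0}})$ and that the Loewner bound $\mathrm{Cov}(\Yb\bm{w})\preceq\norm{\bm{w}}_2^2\mathbf{T}$ suffices to close the comparison without incurring spurious logarithmic factors is the step on which I would spend the most care.
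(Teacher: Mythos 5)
Your overall frame (write $\norm{\Yb}=\sup_{\bu,\bv}\bu^\T\Yb\bv$ and run a Slepian--Fernique comparison, adapting \cite{van2017structured}) is the same as the paper's, and the parts you actually carry out are correct and coincide with the paper's proof: the split $(a+b)^2\le 2a^2+2b^2$, the row-direction domination coming from $\pm(\mathbf{\Sigma}_d+\mathbf{\Sigma}_d^\T)\preceq 2\tilde{\mathbf{\Sigma}}_d$ together with the fact that lag-$d$ autocorrelation coefficients of a unit vector are at most $1$, and the trace identity $\tr(\mathbf{\Sigma}_{L,\bu})=n\,\bu^\T\mathbf{\Sigma}_0\bu$ for the column direction (your $\Gamma_\bu$ is the paper's $\mathbf{\Sigma}_{L,\bu}$). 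But the proposal never actually closes the column direction, so as a proof it has a genuine gap, and neither of your two suggested repairs closes it. The baseline split is circular: $\sup_{\bu,\bv}(\bu-\bu_0)^\T\Yb\bv$ is again of order $\norm{\Yb}$, since $\{\bu-\bu_0:\bu\in\mathbb{S}^{p-1}\}$ has diameter $2$. And running the comparison as in Lemma \ref{lem:gau1}, i.e.\ bounding the Toeplitz form $(\bv-\bv')^\T\mathbf{\Sigma}_{L,\bu'}(\bv-\bv')$ by $\big(\norm{\mathbf{\Sigma}_0}+2\sum_{d\ge1}\norm{\mathbf{\Sigma}_d}\big)\norm{\bv-\bv'}_2^2$, produces the column term $\sqrt{2n\big(\norm{\mathbf{\Sigma}_0}+2\sum_{d}\norm{\mathbf{\Sigma}_d}\big)}$ --- a strictly weaker inequality than Lemma \ref{lem:gau2} as stated (though it does suffice for Theorem \ref{thm: gaummb}, where geometric decay gives $\sum_d\norm{\mathbf{\Sigma}_d}=O(\norm{\mathbf{\Sigma}_0})$).

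You should know, however, that the obstruction you isolated is real, and it is precisely the step at which the paper's own proof fails. The paper disposes of the column direction by asserting $\mathbf{\Sigma}_{L,\bu}\preceq\mathbf{\Sigma}^{\circ}_{\bu}=(\bu^\T\mathbf{\Sigma}_0\bu)\mathbf{1}_n\mathbf{1}_n^\T$ ``since $\mathbf{\Sigma}_L$ is positive semi-definite,'' and then compares against the column field $\sqrt{2}\norm{\mathbf{\Sigma}_0}^{1/2}\bv^\T\bg'$ with $\cov(\bg')=\mathbf{1}_n\mathbf{1}_n^\T$. That Loewner claim is false: already for an i.i.d.\ sequence one has $\mathbf{\Sigma}_{L,\bu}=(\bu^\T\mathbf{\Sigma}_0\bu)\Ib_n$, and $\Ib_n\not\preceq\mathbf{1}_n\mathbf{1}_n^\T$ (test any nonzero vector orthogonal to $\mathbf{1}_n$). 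Your stronger assertion --- that no column field on the scale of $\norm{\mathbf{\Sigma}_0}$ can dominate the increments --- can be made precise: take independent coordinates $Y_{t,k}=g_k\cos(2\pi kt/n)+g_k'\sin(2\pi kt/n)$ with $g_k,g_k'$ i.i.d.\ standard normal and $1\le k\le p<n/2$, so that $\mathbf{\Sigma}_0=\Ib_p$. Taking $\bu=\bu'$ in the comparison shows any linear column field $\bv\mapsto\bv^\T\bxi$ must satisfy $\cov(\bxi)\succeq\mathbf{\Sigma}_{L,\bu}$ for every unit $\bu$; but in this example the matrices $\mathbf{\Sigma}_{L,\bu}$, with $\bu$ ranging over the standard basis of $\R^p$, act as $(n/2)\Ib_2$ on $p$ mutually orthogonal two-dimensional subspaces, forcing $\tr\{\cov(\bxi)\}\ge np$ and hence $\E\sup_{\bv}\bv^\T\bxi\ge\sqrt{np/3}\gg\sqrt{2n\norm{\mathbf{\Sigma}_0}}$. (The lemma's conclusion is not contradicted there, because $\tr(\mathbf{\Sigma}_0+2\sum_d\tilde{\mathbf{\Sigma}}_d)$ is itself of order $np$; it is only the additive comparison that cannot yield the stated second term.) So neither your sketch nor the paper's argument proves the lemma with the constant-$\norm{\mathbf{\Sigma}_0}$ column term; the repair available by this method is the inflated factor $\norm{\mathbf{\Sigma}_0}+2\sum_d\norm{\mathbf{\Sigma}_d}$, which is all that the paper's downstream use of the lemma (through Proposition \ref{prop:gau} and Theorem \ref{thm: gaummb}) actually requires. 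Here $\bxi$ denotes a mean-zero Gaussian vector in $\R^n$.
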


The proof of Proposition \ref{prop:gau} completes by combining Lemma \ref{lem:gau1} and Lemma \ref{lem:gau2}.
\end{proof}

\subsection{Proofs of auxiliary lemmas}\label{subsec:aux}

%Lemma \ref{lem:subgautail} provides a tail inequality for quadratic form of subgaussian vector. The proof is as follows.
\begin{proof}[Proof of Lemma \ref{lem:subgautail}]
	By Lemma A.2 in \cite{bunea2015sample}, we have $\E\norm{\bY_t}_2^{2k} \leq (2k)^k\kappa_*^{2k}$ for $t \in \Z$. Hence $$\norm{\norm{\bY_t}_2^2 - \E\norm{\bY_t}_2^2}_{\psi_1} \leq  2\norm{\norm{\bY_t}_2^2}_{\psi_1} \leq 4\norm{\norm{\bY_t}_2}_{\psi_2}^2\leq 8\kappa_*^2.$$
	
	Thus by property of sub-exponential random variable and Chernoff inequality, we have for any $x \geq 0$,
	$$\P(\norm{\bY_t}_2^2 - \E\norm{\bY_t}_2^2 \geq x) \leq \exp\Big\{-C\min\Big(\ \frac{x^2}{64\kappa_*^4}, \frac{x}{8\kappa_*^2} \Big)\Big\},$$
	for some arbitary constant $C>0$.
	Obviously, we have for all $x \geq 0$,
	$$\P\{\norm{\bY_t }_2^2  \geq 2\kappa_*^2 + 8\kappa_*^2(x + \sqrt{x})\}\leq \exp(-Cx)$$
	for some arbitary constant $C>0$. This completes the proof.
\end{proof}

%Lemma \ref{lem:tau} establishes the $\tau$-mixing properties of $\{\Xb_t^M\}_{t\in \Z}$ and $\{\Zb_t^M\}_{t\in \Z}$. The proof will first show that $\{\Xb_t\}_{t\in \Z}$ and $\{\Zb_t\}_{t\in \Z}$ are $\tau$-mixing random sequences with geometric decay under Assumptions \ref{A1}-\ref{A2}, and then show that the ``truncated" versions of each have the same property.

\begin{proof}[Proof of Lemma \ref{lem:tau}]
	We first show that $\{\Xb_t\}_{t\in \Z}$ is a sequence of $\tau$-mixing random vectors with geometric decay. Under Assumption \ref{A2} (without loss of generality, take $j=0$), there exists a sequence of random vectors $\{\tilde{\bY}_t\}_{t > 0}$ which is independent of $\sigma(\{\bY_t\}_{t \leq 0})$, identically distributed as $\{\bY_t\}_{t > 0}$, and for any integer $t \geq 1$,
	$$\norm{\norm{\bY_t- \tilde{\bY}_t}_2}_{L(1+\epsilon)} \leq \gamma_1 \kappa_1  \exp\{-\gamma_2(t-1)\}$$
	for some constant $\epsilon >0$.
	Then for any $m \geq 0$,
	\begin{align*}
	&\E\norm{\bY_t\bY_{t+m}^\T- \tilde{\bY}_t\tilde{\bY}_{t+m}^\T} \\
	= &\E\norm{\bY_t\bY_{t+m}^\T- \bY_t\tilde{\bY}_{t+m}^\T + \bY_t\tilde{\bY}_{t+m}^\T- \tilde{\bY}_t\tilde{\bY}_{t+m}^\T} \\
	\leq &\E\norm{\bY_t(\bY_{t+m}- \tilde{\bY}_{t+m})^\T} + \E\norm{(\bY_t- \tilde{\bY}_t)\tilde{\bY}_{t+m}^\T}\\
	\leq & \norm{\norm{\bY_t}_2}_{L(\frac{1+\epsilon}{\epsilon})}
	\norm{\norm{\bY_{t+m}- \tilde{\bY}_{t+m}}_2}_{L(1+\epsilon)}
	+  \norm{\norm{\bY_{t+m}}_2}_{L(\frac{1+\epsilon}{\epsilon})}
	\norm{\norm{\bY_{t}- \tilde{\bY}_{t}}_2}_{L(1+\epsilon)}\\
	\leq & C \gamma_1 \kappa_1\kappa_* \exp\{-\gamma_2(t-1)\},
	\end{align*}	
	where the fourth line is followed by H\"older's inequality and the fact that
	$$\sup_{t \in \Z} \norm{\norm{\bY_t}_2 }_{L(\alpha)} \leq  \sup_{t \in \Z} \sup_{\bu\in \bar{\mathbb{S}}^{p-1}} \norm{\bu^\T \bY_t}_{L(\alpha)} \leq  \sup_{t \in \Z} \sup_{\bu\in \bar{\mathbb{S}}^{p-1}} \sqrt{\alpha} \|\bu^\T \bY_t \|_{\psi_2} \leq \sqrt{\alpha} \kappa_*$$
	for any $\alpha \geq 1$. Here $C>0$ is some constant only depending on $\epsilon$.
	
	Now define $\tilde{\Xb}_t := \tilde{\bY}_t\tilde{\bY}_t^\T$ for any integer $t >0$. It is obvious that $\{\tilde{\Xb}_t\}_{t > 0}$ is independent of $\{\Xb_t\}_{t \leq 0}$ and identically distributed as $\{\Xb_t\}_{t > 0}$. By applying Lemma \ref{lem:tauup}, for any indices $0<k \leq t_1 < \dots < t_\ell$, we obtain
	\begin{align*}
	\tau\{\sigma(\{\Xb_t\}_{t\leq 0}), (\Xb_{t_1}, \dots, \Xb_{t_\ell}); \norm{\cdot}\}
	\leq \sum_{i= 1}^\ell \E\norm{\Xb_{t_i} - \tilde{\Xb}_{t_i}}
	\leq C\gamma_1\kappa_1\kappa_*\ell\exp\{-\gamma_2(k-1)\}.
	\end{align*}
	By definition of $\tau$-mixing coefficient, this yields
	$$\tau(k; \{\Xb_t\}_{t \in \Z}, \norm{\cdot})  \leq C\gamma_1\kappa_1\kappa_*\exp\{-\gamma_2(k-1)\}$$
	for some constant $C>0$ only depending on $\epsilon$.
	
	Now we proceed to prove $\tau$-mixing properties for the ``truncated version". The following lemma is needed. 
	
	\begin{lemma}[Proof in Section \ref{subsec:aux}] \label{lem:trun}
		Let $\bu_1,\ \bu_2,\ \bv_1,\ \bv_2 \in \R^p$ for $p\geq 1$ with unit length under $\ell_2$-norm and $\sigma_u \geq 0$. Then the function
		$$f(\sigma_v) = \norm{\sigma_v \bv_1 \bv_2^\T-\sigma_u \bu_1 \bu_2^\T}$$
		is non-decreasing in the range $\sigma_v \in [\sigma_u, \infty]$. In particular, for any $M \geq 0$ such that $M \leq \sigma_u,\ M \leq \sigma_v$, we have $$\norm{M \bv_1 \bv_2^\T - M \bu_1 \bu_2^\T} \leq \norm{\sigma_v \bv_1 \bv_2^\T - \sigma_u \bu_1 \bu_2^\T}.$$
	\end{lemma}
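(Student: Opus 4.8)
The plan is to prove Lemma \ref{lem:trun} by reducing the spectral norm of a difference of two rank-one matrices to an explicit computation on a two-dimensional subspace, and then analyzing the resulting scalar expression as a function of $\sigma_v$. First I would observe that both $\sigma_v\bv_1\bv_2^\T$ and $\sigma_u\bu_1\bu_2^\T$ have range contained in $\mathrm{span}\{\bv_1,\bu_1\}$ and co-range contained in $\mathrm{span}\{\bv_2,\bu_2\}$, so the operator $\sigma_v\bv_1\bv_2^\T-\sigma_u\bu_1\bu_2^\T$ acts as the zero map outside at most a two-dimensional input subspace and lands in at most a two-dimensional output subspace. Hence its spectral norm equals the spectral norm of a $2\times2$ (or smaller) matrix obtained by choosing orthonormal bases for these subspaces, and I can compute $f(\sigma_v)^2=\norm{\sigma_v\bv_1\bv_2^\T-\sigma_u\bu_1\bu_2^\T}^2$ as the largest eigenvalue of a small Gram-type matrix.

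Concretely, I would write $c_1:=\langle\bu_1,\bv_1\rangle$ and $c_2:=\langle\bu_2,\bv_2\rangle$, the cosines of the angles between the left and right vector pairs. Setting $A:=\sigma_v\bv_1\bv_2^\T-\sigma_u\bu_1\bu_2^\T$, the squared spectral norm is the top eigenvalue of $A^\T A=\sigma_v^2\bv_2\bv_2^\T+\sigma_u^2\bu_2\bu_2^\T-\sigma_u\sigma_v c_1(\bv_2\bu_2^\T+\bu_2\bv_2^\T)$. Restricting to $\mathrm{span}\{\bv_2,\bu_2\}$ and using an orthonormal basis, this becomes an explicit symmetric $2\times2$ matrix whose entries are quadratic in $\sigma_v$, with top eigenvalue
\[
f(\sigma_v)^2=\tfrac12\Big\{\sigma_v^2+\sigma_u^2-2\sigma_u\sigma_v c_1c_2+\sqrt{(\sigma_v^2-\sigma_u^2)^2+4\sigma_u\sigma_v(\sigma_u-\sigma_v c_1c_2)(\sigma_v-\sigma_u c_1c_2)(\cdots)}\Big\},
\]
where the exact form under the square root follows from the trace and determinant of the $2\times2$ matrix; I would not grind through the algebra but record that $f(\sigma_v)^2$ is a smooth function with $\mathrm{tr}=\sigma_v^2+\sigma_u^2$ and a determinant proportional to $\sigma_u^2\sigma_v^2(1-c_1^2)(1-c_2^2)$.

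From this closed form I would then show monotonicity on $[\sigma_u,\infty)$ by differentiating $f(\sigma_v)^2$ in $\sigma_v$ and checking the derivative is nonnegative when $\sigma_v\geq\sigma_u$. The trace contributes $2\sigma_v$, and the main point is that the larger eigenvalue is increasing because increasing $\sigma_v$ inflates the $\bv_1\bv_2^\T$ component that dominates once $\sigma_v\geq\sigma_u$; the determinant, being proportional to $\sigma_v^2$, also grows, so both eigenvalues move in a controlled way. The cleanest route is to verify $\tfrac{d}{d\sigma_v}f(\sigma_v)^2\geq0$ directly, using that $|c_1|,|c_2|\leq1$ forces the cross term $-2\sigma_u\sigma_v c_1c_2$ to be dominated by the diagonal growth. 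Once monotonicity is established, the ``in particular'' clause is immediate: taking $\sigma_v=M$ and $\sigma_u=M$ recovers $f(M)=\norm{M\bv_1\bv_2^\T-M\bu_1\bu_2^\T}$, and since $M\leq\sigma_u\leq\sigma_v$ (after relabeling so the smaller of the two truncated magnitudes plays the role of $\sigma_u$ in the monotonicity statement), evaluating $f$ at the larger arguments gives the claimed bound $\norm{M\bv_1\bv_2^\T-M\bu_1\bu_2^\T}\leq\norm{\sigma_v\bv_1\bv_2^\T-\sigma_u\bu_1\bu_2^\T}$.

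I expect the main obstacle to be handling the square-root term in the eigenvalue formula when verifying monotonicity, since its derivative is not obviously signed and one must exploit the constraints $|c_1|,|c_2|\leq 1$ carefully; a slick alternative that sidesteps the explicit eigenvalue is to note that for fixed $\sigma_u\bu_1\bu_2^\T$, the map $\sigma_v\mapsto\norm{\sigma_v\bv_1\bv_2^\T-\sigma_u\bu_1\bu_2^\T}$ is convex (as a supremum over unit test vectors $\bx,\by$ of the affine functions $\sigma_v\langle\bv_2,\by\rangle\langle\bv_1,\bx\rangle-\sigma_u\langle\bu_2,\by\rangle\langle\bu_1,\bx\rangle$), and then argue that its minimum over $\sigma_v\in\R$ occurs at some point $\sigma_v^\ast\leq\sigma_u$, whence $f$ is nondecreasing on $[\sigma_u,\infty)\subseteq[\sigma_v^\ast,\infty)$. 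This convexity-plus-location-of-minimum argument is likely the shortest path and avoids the messy differentiation entirely.
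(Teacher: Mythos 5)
Your first route (reduce to a two\-dimensional problem and track the top eigenvalue of $A^\T A$) is essentially the paper's own strategy: the paper reduces $\sigma_u\bu_1\bu_2^\T-\sigma_v\bv_1\bv_2^\T$ to an explicit $2\times2$ matrix (via Brand's SVD-update formula), writes its operator norm in closed form as a sum of two square-root terms, and differentiates, using only the Cauchy--Schwarz facts $|c_1c_2|\le1$ and $(1-c_1c_2)^2\ge(1-c_1^2)(1-c_2^2)$. But you stop exactly where the work is: you yourself flag that the derivative of the square-root term "is not obviously signed" and defer the verification, which is the entire content of the lemma. As written, the first route is a plan, not a proof. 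A smaller but real slip: your recorded trace is wrong, $\tr(A^\T A)=\sigma_v^2+\sigma_u^2-2\sigma_u\sigma_vc_1c_2$, not $\sigma_v^2+\sigma_u^2$ (the determinant $\sigma_u^2\sigma_v^2(1-c_1^2)(1-c_2^2)$ is correct). With the correct trace the computation does close: writing $2f(\sigma_v)^2=T+\sqrt{T^2-4D}$, one checks $T'\ge0$ and $TT'-2D'\ge0$ on $[\sigma_u,\infty)$ using $(1-c_1^2)(1-c_2^2)\le(1-|c_1c_2|)^2$, but someone has to actually do this.

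The "slick alternative" you recommend as the shortest path has a genuine gap. The convexity observation is fine (the spectral norm is a supremum of functions affine in $\sigma_v$), but the claim that the global minimizer $\sigma_v^*$ of $f$ over $\R$ satisfies $\sigma_v^*\le\sigma_u$ is left entirely unproven, and for a convex function this claim is \emph{equivalent} to the monotonicity on $[\sigma_u,\infty)$ that you are trying to establish; you have renamed the conclusion, not derived it. The claim is also tight (when $\bu_i=\bv_i$, $f(\sigma_v)=|\sigma_v-\sigma_u|$ is minimized exactly at $\sigma_u$), so any proof of the minimizer location must genuinely use the geometry of the two rank-one directions --- e.g., showing that for $\sigma_v\ge\sigma_u$ the top singular vector pair $(\bx^*,\by^*)$ of $\sigma_v\bv_1\bv_2^\T-\sigma_u\bu_1\bu_2^\T$ satisfies $\{(\bx^*)^\T\bv_1\}\{\bv_2^\T\by^*\}\ge0$ --- which is essentially the computation you were trying to avoid. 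Finally, your endgame is looser than it should be: the monotonicity statement varies one coefficient with the other held fixed, so passing from $\norm{M\bv_1\bv_2^\T-M\bu_1\bu_2^\T}$ to $\norm{\sigma_v\bv_1\bv_2^\T-\sigma_u\bu_1\bu_2^\T}$ requires two moves, and the correct first move is homogeneity (scale both coefficients from $M$ up to $\min(\sigma_u,\sigma_v)$), as in the paper; a second application of monotonicity is not available there, since monotonicity in one coefficient only holds once it exceeds the other.
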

	
		Now consider three cases. 
	
	(1) When $\norm{\Xb_t} \leq M$ and $\norm{\tilde{\Xb}_t} \leq M$, $\norm{\Xb_t^M - \tilde{\Xb}_t^M} = \norm{\Xb_t - \tilde{\Xb}_t}$. 
	
	(2) When $\norm{\Xb_t} \leq M$ and $\norm{\tilde{\Xb}_t} > M$, we have 
	\[
	\Xb_t^M = \Xb_t = \norm{\bY_t}_2^2\frac{\bY_t}{\norm{\bY_t}_2}\frac{\bY_t^\T}{\norm{\bY_t}_2}~~~{\rm and}~~~\tilde{\Xb}_t^M = M\frac{\tilde{\bY}_t}{\norm{\tilde{\bY}_t}_2}\frac{\tilde{\bY}_t^\T}{\norm{\tilde{\bY}_t}_2}. 
	\]
	Since $\frac{\bY_t}{\norm{\bY_t}_2},\ \frac{\tilde{\bY}_t}{\norm{\tilde{\bY}_t}_2}$ have unit length and $\norm{\bY_t}_2^2\leq M < \norm{\tilde\bY_t}_2^2$, we have $\norm{\Xb_t^M - \tilde{\Xb}_t^M} \leq \norm{\Xb_t - \tilde{\Xb}_t}$ by Lemma \ref{lem:trun}. By symmetry, the same argument also applies to the case where $\norm{\Xb_t} > M$ and $\norm{\tilde{\Xb}_t} \leq  M$. 
	
	(3) When $\norm{\Xb_t} > M$ and $\norm{\tilde{\Xb}_t} > M$, we have $\Xb_t^M = M\frac{\bY_t}{\norm{\bY_t}_2}\frac{\bY_t^\T}{\norm{\bY_t}_2}$ and $\tilde{\Xb}_t^M = M\frac{\tilde{\bY}_t}{\norm{\tilde{\bY}_t}_2}\frac{\tilde{\bY}_t^\T}{\norm{\tilde{\bY}_t}_2}$. Again by Lemma \ref{lem:trun}, we have $\norm{\Xb_t^M - \tilde{\Xb}_t^M} \leq \norm{\Xb_t- \tilde{\Xb}_t}$. 
	
	By combining three cases, $\norm{\Xb_t^M - \tilde{\Xb}_t^M} \leq \norm{\Xb_t - \tilde{\Xb}_t}$ always holds, and hence $\E\norm{\Xb_t^M - \tilde{\Xb}_t^M} \leq \E\norm{\Xb_t - \tilde{\Xb}_t}$ for any $t \geq 1$. Hence for any indices $0<k \leq t_1 < \dots < t_\ell$, by Lemma \ref{lem:tauup}, we have
	$$\tau\{\sigma(\{\Xb_t^M\}_{t\leq 0}), (\Xb_{t_1}^M, \dots, \Xb_{t_\ell}^M); \norm{\cdot}\}
	\leq C\gamma_1\kappa_1\kappa_*\ell \exp\{-\gamma_2(k-1)\}$$
	for some constant $C>0$ only depending on $\epsilon$. By definition of $\tau$-mixing coefficient, this yields
	$$\tau(k; \{\Xb_t^M\}_{t \in \Z}, \norm{\cdot})  \leq C\gamma_1\kappa_1\kappa_*\exp\{-\gamma_2(k-1)\}$$
	for some constant $C>0$ only depending on $\epsilon$. Notice that $ \E\norm{\Xb_t^M - \E\Xb_t^M - (\tilde{\Xb}_t^M - \E\tilde{\Xb}_t^M)} =  \E\norm{\Xb_t^M -\tilde{\Xb}_t^M}$ since $\E\tilde{\Xb}_t^M = \E\Xb_t^M$ for any $t\geq 1$. The $\tau$-mixing property stated above applies to $\{\Xb_t^M - \E\Xb_t^M \}$ directly.

	Similar arguments apply to $\{\Zb_t^M\}_{t \in \Z}$ and $\{\Zb_t^M - \E\Zb_t^M\}_{t \in \Z}$ so we omit the details. This completes the proof.
\end{proof}

%Lemma \ref{lem:nu} calculates upper bounds for $\nu^2_{\Xb^M}$ and $\nu^2_{\Zb^M}$. Again, this proof will first provide upper bounds for
%\begin{align*}
%\nu^2_{\Xb} &:= \sup_{K \subseteq \{1, \dots, n\}}\frac{1}{\card{(K)}}\lambda_{\max}\bigg\{\E\bigg(\sum_{i\in K}\Xb_i - \E\Xb_i\bigg)^2\bigg\},\\
%\nu^2_{\Zb} &:= \sup_{K \subseteq \{1, \dots, n-m\}}\frac{1}{\card{(K)}}\bigg\|\E\bigg(\sum_{i\in K}\Zb_i - \E\Zb_i\bigg)^2\bigg\|.
%\end{align*}
%Then we finish the proof by considering truncation.

\begin{proof}[Proof of Lemma \ref{lem:nu}]
	The proof consists of two steps.
	
\textbf{Step I. } We first provide an upper bound for $\nu^2_{\Xb}$. %:= \sup_{K \subseteq \{1, \dots, n\}}\frac{1}{\card{(K)}}\lambda_{\max}\{\E(\sum_{i\in K}\Xb_i - \E\Xb_i)^2\}$. 
	Without loss of generality, %since we are interested in upper bound of $\nu^2_{\Xb}$, 
	we only consider $\norm{\E(\Xb_0 - \E\Xb_0)(\Xb_k - \E\Xb_k)}$ for $k \geq 0$. Under Assumptions \ref{A2}-\ref{A3}, there exists 
	$\tilde{\bY}_k$ where $\tilde{\bY}_k$ is independent of $\sigma(\{\bY_t\}_{t \leq 0 })$, identically distributed as $\bY_k$, and 
	\begin{align*}
	\norm{ \norm{\bY_k- \tilde{\bY}_k}_2  }_{L(1+\epsilon)} \leq \gamma_1\kappa_1\exp\{-\gamma_2(k-1)\},\\
	\norm{ (\bY_k- \tilde{\bY}_k)^\T\bu  }_{L(1+\epsilon)} \leq \gamma_3\kappa_1\exp\{-\gamma_4(k-1)\}
	\end{align*}
	for constants $\gamma_1, \gamma_2, \gamma_3, \gamma_4>0$ in Assumptions \ref{A2}-\ref{A3}.
	
	For $k = 0$, we have  
	$$\norm{\E \Xb_0  \Xb_0 - \E\Xb_0 \E\Xb_0} \leq C(\kappa_1^4 + \kappa_1^2\kappa_*^2)$$
	by Assumption \ref{A1} for some universal constant $C>0$. For $k >0$, we obtain
	\begin{align*}
	\norm{\E \Xb_0  \Xb_k - \E\Xb_0 \E\Xb_k}
	=& \norm{\E \Xb_0  \Xb_k -\E \Xb_0  \tilde{\Xb}_k}\\
	=&\norm{\E \Xb_0(\Xb_k-\tilde{\Xb}_k)}\\
	=& \sup_{\bu, \bv \in \mathbb{S}^{p-1}}\E|\bu^\T \bY_0\bY_0^\T(\bY_k\bY_k^\T- \tilde{\bY}_k\tilde{\bY}_k^\T)\bv|\\
	\leq &  \sup_{\bu, \bv \in \mathbb{S}^{p-1}} \E|\bu^\T \bY_0\bY_0^\T\bY_k(\bY_k^\T - \tilde{\bY}_k^\T)\bv + \bu^\T \bY_0\bY_0^\T(\bY_k- \tilde{\bY}_k)\tilde{\bY}_k^\T\bv|\\
	\leq &  \sup_{\bu, \bv \in \mathbb{S}^{p-1}} \{\E|\bY_0^\T\bY_k|^{\frac{3(1+\epsilon)}{2\epsilon}}\}^{\frac{2\epsilon}{3(1+\epsilon)}}
	\norm{\bu^\T\bY_0}_{L(\frac{3(1+\epsilon)}{\epsilon})}
	\norm{(\bY_k - \tilde{\bY}_k)^\T\bv}_{L(1+\epsilon)}+\\
	& \{\E|\bu^\T\bY_0\tilde{\bY}_k^\T\bv|^{\frac{3(1+\epsilon)}{2\epsilon}}\}^{\frac{2\epsilon}{3(1+\epsilon)}}
	\norm{\norm{\bY_0}_2}_{L(\frac{3(1+\epsilon)}{\epsilon})}
	\norm{\norm{\bY_k - \tilde{\bY}_k}_2}_{L(1+\epsilon)}\\
	\leq & C\kappa_1^2\kappa_*(\kappa_*\gamma_3 + \kappa_1\gamma_1)\exp\{-\min(\gamma_2, \gamma_4)(k-1)\},
	\end{align*}
	where the first line is followed by $\E\Xb_k =  \E\tilde{\Xb}_k$, fifth line by H\"older's inequality, and sixth line by Assumptions \ref{A1}-\ref{A3} for some constant $C>0$ only depending on $\epsilon$.
	
	Hence for any $K \subseteq \{1,\dots, n\}$,
	\begin{align*}
	&\frac{1}{\card{(K)}}\lambda_{\max}\bigg\{\E\bigg(\sum_{i \in K} \Xb_i - \E \Xb_i\bigg)^2\bigg\} \\
	\leq &\frac{1}{\card{(K)}}\bigg\|\sum_{i,j \in K} \E(\Xb_i - \E \Xb_i)(\Xb_j - \E \Xb_j)\bigg\| \\
	\leq &\frac{1}{\card{(K)}}\sum_{i,j \in K}\norm{\E(\Xb_i - \E \Xb_i)(\Xb_j - \E \Xb_j)}\\
	\leq &C\Big[ \kappa_1^4 + \kappa_1^2\kappa_*^2 + \frac{\kappa_1^2\kappa_*(\kappa_*\gamma_3 + \kappa_1\gamma_1)}{\card{(K)}}\sum_{i,j \in K, i\neq j}\exp\{-\min(\gamma_2, \gamma_4)(|i-j|-1)\}\Big]\\
	\leq & C\Big[\frac{\kappa_1^2\{\kappa_1^2 + \kappa_1\kappa_*\gamma_1  + \kappa_*^2(\gamma_3+1)\}}{1- \exp(-\min\{\gamma_2, \gamma_4\})}\Big].
	\end{align*}

\textbf{Step II.}We first bound $\nu^2_{\Xb^M}$. By definition, we have
\begin{align*}
&\bigg\|\E\bigg(\sum_{i\in K}\Xb_i^M - \E\Xb_i^M\bigg)^2\bigg\| 
=  \bigg\|\sum_{i, j \in K}\E (\Xb_i^M - \E\Xb_i^M)(\Xb_j^M - \E\Xb_j^M)\bigg\|
= \bigg\|\sum_{i, j \in K} (\E\Xb_i^M \Xb_j^M -   \E\Xb_i^M \E\Xb_j^M)\bigg\|.
\end{align*}
Without loss of generality, we consider $\norm{\E\Xb_0^M\Xb_k^M - \E\Xb_0^M \E\Xb_k^M}$ for $k\geq0$. Let $\tilde{\Xb}_k^M$ be defined as in the proof of Lemma \ref{lem:tau}. Then $\tilde{\Xb}_k^M$ is independent of 
$\tilde{\Xb}_0^M$ and distributed as $\Xb_k^M$. Hence
\begin{align*}
\norm{\E\Xb_0^M\Xb_k^M - \E\Xb_0^M \E\Xb_k^M} 
= \norm{\E\Xb_0^M\Xb_k^M  - \E\Xb_0^M \E\tilde{\Xb}_k^M}.
\end{align*}
Then we could rewrite
\begin{align*}
\norm{\E\Xb_0^M\Xb_k^M - \E\Xb_0^M \E\tilde{\Xb}_k^M}
= &\norm{\E \Xb_0\Xb_k\zeta_0\zeta_k- \E \Xb_0\tilde{\Xb}_k\zeta_0\tilde{\zeta}_k}\\
=&\norm{\E \Xb_0(\Xb_k-\tilde{\Xb}_k)\zeta_0\zeta_k + \E\Xb_0\tilde{\Xb}_k\zeta_0(\zeta_k-\tilde{\zeta}_k)},
\end{align*}
where $\zeta_i = \frac{M \wedge \norm{\Xb_i}}{\norm{\Xb_i}},\ \tilde{\zeta}_i = \frac{M \wedge \norm{\tilde{\Xb}_i}}{\norm{\tilde{\Xb}_i}}$.
Since $\zeta_0,\ \zeta_k$ are bounded by 1, we have
\begin{align*}
\norm{\E \Xb_0(\Xb_k-\tilde{\Xb}_k)\zeta_0\zeta_k} &=\sup_{\bu,\bv\in\mathbb{S}^{p-1}}\E|\bu^\T\Xb_0(\Xb_k-\tilde{\Xb}_k)\zeta_0\zeta_k \bv|\\
&\leq\sup_{\bu,\bv\in\mathbb{S}^{p-1}}\E|\bu^\T\Xb_0(\Xb_k-\tilde{\Xb}_k) \bv| \\
&=  \norm{\E \Xb_0(\Xb_k-\tilde{\Xb}_k)} \\
&\leq C\kappa_1^2(\kappa_1\kappa_*\gamma_1 + \kappa_*^2\gamma_3)
\exp\{-\min(\gamma_2, \gamma_4)(k-1)\},
\end{align*}
where the last inequality is from result in Step I for some constant $C>0$ only depending on $\epsilon$.

On the other hand, by applying H\"older's inequality, we have
\begin{align*}
\norm{\E\Xb_0\tilde{\Xb}_k\zeta_0(\zeta_k-\tilde{\zeta}_k)}
&=\sup_{\bu, \bv\in \mathbb{S}^{p-1}}\E|\bu^\T\Xb_0\tilde{\Xb}_k\bv||\zeta_k - \tilde{\zeta}_k|\\
& \leq \sup_{\bu, \bv\in \mathbb{S}^{p-1}}\{\E|\bu^\T\bY_0\bY_0^\T\tilde{\bY}_k\tilde{\bY}_k^\T\bv|^{\frac{5(1+\epsilon)}{4\epsilon}}\}^{\frac{4\epsilon}{5(1+\epsilon)}}\{\E|\zeta_k - \tilde{\zeta}_k|^{\frac{5(1+\epsilon)}{5+\epsilon}}\}^{\frac{5+\epsilon}{5(1+\epsilon)}}.
\end{align*}

Hence, for any $\bu, \bv\in \mathbb{S}^{p-1}$,
\begin{align*}
\{\E|\bu^\T\bY_0\bY_0^\T\tilde{\bY}_k\tilde{\bY}_k^\T\bv|^{\frac{5(1+\epsilon)}{4\epsilon}}\}^{\frac{4\epsilon}{5(1+\epsilon)}} \leq& \norm{\bu^{\T}\bY_0}_{L(\frac{5(1+\epsilon)}{\epsilon})}\norm{\bu^{\T}\tilde{\bY}_k}_{L(\frac{5(1+\epsilon)}{\epsilon})} \norm{\norm{\bY_0}_2}_{L(\frac{5(1+\epsilon)}{\epsilon})}\norm{\norm{\tilde{\bY}_k}_2}_{L(\frac{5(1+\epsilon)}{\epsilon})}\\
\leq &C\kappa_1^2\kappa_*^2,
\end{align*}
where the first line follows by H\"older's inequality and the last line by Assumption \ref{A1} for some constant $C>0$ only depending on $\epsilon$.

Next, we need to bound $\norm{\zeta_k - \tilde{\zeta}_k}_{L(\frac{5(1+\epsilon)}{5+\epsilon})}$. For the sake of presentation clearness, we denote $a_k := \norm{\Xb_k}$ and $ \tilde{a}_k := \norm{\tilde{\Xb}_k}$, and rewrite
\begin{align}\label{trunpar}
&\norm{\zeta_k- \tilde{\zeta}_k}_{L(\frac{5(1+\epsilon)}{5+\epsilon})} \nonumber\\
=& \bigg\|M\bigg\lvert \frac{1}{a_k}- \frac{1}{\tilde{a}_k}\bigg\rvert\mathbf{1}_{\{a_k > M, \tilde{a}_k>M\}}
+ \bigg(1-\frac{M}{a_k}\bigg)\mathbf{1}_{\{a_k > M, \tilde{a}_k\leq M\}}
+\bigg(1-\frac{M}{\tilde{a}_k}\bigg)\mathbf{1}_{\{a_k \leq M, \tilde{a}_k>M\}}
\bigg\|_{L(\frac{5(1+\epsilon)}{5+\epsilon})}\nonumber\\
\leq & \bigg\|M\bigg\lvert \frac{1}{a_k}- \frac{1}{\tilde{a}_k}\bigg\rvert\mathbf{1}_{\{a_k > M, \tilde{a}_k>M\}}\bigg\|_{L(\frac{5(1+\epsilon)}{5+\epsilon})}+
\bigg\| \bigg(1-\frac{M}{a_k}\bigg)\mathbf{1}_{\{a_k > M, \tilde{a}_k\leq M\}}\bigg\|_{L(\frac{5(1+\epsilon)}{5+\epsilon})}\nonumber\\
&+\bigg\|\bigg(1-\frac{M}{\tilde{a}_k}\bigg)\mathbf{1}_{\{a_k \leq M, \tilde{a}_k>M\}}
\bigg\|_{L(\frac{5(1+\epsilon)}{5+\epsilon})},
\end{align}
where the last inequality follows by the fact that $\norm{\cdot}_{L(\frac{5(1+\epsilon)}{5+\epsilon})}$  is a norm for $\epsilon > 0$.

For the first term, we have
\begin{align*}
\bigg\|M\bigg\lvert \frac{1}{a_k}- \frac{1}{\tilde{a}_k}\bigg\rvert\mathbf{1}_{\{a_k > M, \tilde{a}_k>M\}}\bigg\|_{L(\frac{5(1+\epsilon)}{5+\epsilon})}
&= \bigg\|M\bigg\lvert \frac{\tilde{a}_k-a_k}{a_k\tilde{a}_k}
\bigg\rvert\mathbf{1}_{\{a_k > M, \tilde{a}_k>M\}}\bigg\|_{L(\frac{5(1+\epsilon)}{5+\epsilon})}\\
&\leq \frac{1}{M}\{\E|\tilde{a}_k-a_k|^{\frac{5(1+\epsilon)}{5+\epsilon}}\}^{\frac{5+\epsilon}{5(1+\epsilon)}} \\
&\leq \frac{1}{M}\{\E\norm{\Xb_k- \tilde{\Xb}_k}^{\frac{5(1+\epsilon)}{5+\epsilon}}\}^{\frac{5+\epsilon}{5(1+\epsilon)}}\\
& \leq C\gamma_1\kappa_1\kappa_*\exp\{-\gamma_2(k-1)\}/M,
\end{align*}
where the last inequality is followed by Lemma \ref{lem:tau} for some constant $C>0$ only depending on $\epsilon$. With the chosen $M \geq C \gamma_1\kappa_1\kappa_*$, we have $$\bigg\|M\bigg\lvert \frac{1}{a_k}- \frac{1}{\tilde{a}_k}\bigg\rvert\mathbf{1}_{\{a_k > M, \tilde{a}_k>M\}}\bigg\|_{L(\frac{5(1+\epsilon)}{5+\epsilon})} \leq  \exp\{-\gamma_2(k-1)\}.$$

For the second term, taking any $\epsilon_k >0$, we have
\begin{align*}
&\bigg\| \bigg(1-\frac{M}{a_k}\bigg)\mathbf{1}_{\{a_k > M, \tilde{a}_k\leq M\}}\bigg\|_{L(\frac{5(1+\epsilon)}{5+\epsilon})}\\
= &\bigg\| \bigg(1-\frac{M}{M + \epsilon_k}\bigg)\mathbf{1}_{\{M<a_k\leq M + \epsilon_k, \tilde{a}_k\leq M\}}\bigg\|_{L(\frac{5(1+\epsilon)}{5+\epsilon})} + \bigg\| \bigg(1-\frac{M}{a_k}\bigg)\mathbf{1}_{\{a_k>M + \epsilon_k, \tilde{a}_k\leq M\}}\bigg\|_{L(\frac{5(1+\epsilon)}{5+\epsilon})}\\
\leq &\frac{\epsilon_k}{M}+ \bigg\| \mathbf{1}_{\{a_k>M + \epsilon_k, \tilde{a}_k\leq M\}}\bigg\|_{L(\frac{5(1+\epsilon)}{5+\epsilon})} \leq \frac{\epsilon_k}{M}+ \{\P(|a_k-\tilde{a}_k|> \epsilon_k)\}^{\frac{5+\epsilon}{5(1+\epsilon)}}.
\end{align*}
By Markov inequality and Lemma \ref{lem:tau}, we have
$$\P(|a_k-\tilde{a}_k|> \epsilon_k) \leq \frac{\E\norm{\Xb_k - \tilde{\Xb}_k}}{ \epsilon_k} \leq \frac{C\gamma_1\kappa_1\kappa_*\exp\{-\gamma_2(k-1)\}}{ \epsilon_k}$$
for some constant $C>0$ only depending on $\epsilon$. Taking $\epsilon_k = C\gamma_1\kappa_1\kappa_*\exp\{-\frac{5+\epsilon}{6\epsilon+10}\gamma_2(k-1)\}$, we obtain
$$\bigg\| \bigg(1-\frac{M}{a_k}\bigg)\mathbf{1}_{\{a_k > M, \tilde{a}_k\leq M\}}\bigg\|_{L(\frac{5(1+\epsilon)}{5+\epsilon})}\leq 2\exp\bigg\{-\frac{5+\epsilon}{6\epsilon+10}\gamma_2(k-1)\bigg\}.$$
The third term follows by symmetry. Putting together, we have for $k >0$,
\begin{align*}
&\norm{\zeta_k- \tilde{\zeta}_k}_{L(\frac{5(1+\epsilon)}{5+\epsilon})}  \leq C\exp\bigg\{-\frac{5+\epsilon}{6\epsilon+10}\gamma_2(k-1)\bigg\},\\
&\norm{\E\Xb_0\tilde{\Xb}_k\zeta_0(\zeta_k-\tilde{\zeta}_k)}\leq C\kappa_1^2\kappa_*^2\exp\bigg\{-\frac{5+\epsilon}{6\epsilon+10}\gamma_2(k-1)\bigg\},\\
&\norm{\E\Xb_0^M\Xb_k^M - \E\Xb_0^M \E\tilde{\Xb}_k^M} \leq C
\kappa_1^2\{\kappa_1\kappa_*\gamma_1 + \kappa_*^2(\gamma_3+1)\}\exp\bigg\{-\min\Big( \frac{5+\epsilon}{6\epsilon+10}\gamma_2, \gamma_4 \Big)(k-1)\bigg\}
\end{align*}
for some constant $C>0$ only depending on $\epsilon$.
Hence for any $K \subseteq \{1,\dots, n\}$,
\begin{align*}
&\frac{1}{\card{(K)}}\lambda_{\max}\bigg\{\E\bigg(\sum_{i \in K} \Xb_i^M - \E \Xb_i^M\bigg)^2\bigg\} \\
\leq &\frac{1}{\card{(K)}}\bigg\|\sum_{i,j \in K} \E(\Xb_i^M - \E \Xb_i^M)(\Xb_j^M - \E \Xb_j^M)\bigg\| \\
\leq &\frac{1}{\card{(K)}}\sum_{i,j \in K}\norm{\E(\Xb_i^M - \E \Xb_i^M)(\Xb_j^M - \E \Xb_j^M)}\\
\leq &C\Big[ \kappa_1^4 + \kappa_*^2\kappa_1^2 +      \frac{\kappa_1^2\{\kappa_1\kappa_*\gamma_1 + \kappa_*^2(\gamma_3+1)\}\}}{\card(K)}\sum_{i,j \in K, i \neq j}\exp\Big\{-\min\Big( \frac{5+\epsilon}{6\epsilon+10}\gamma_2, \gamma_4 \Big)(|i-j|-1)\Big\}\Big]\\
\leq& C\frac{  \kappa_1^2 \{  \kappa_1^2 + \kappa_1\kappa_*\gamma_1 + \kappa_*^2(\gamma_3 + 2)\}  }{1-\exp\{-\min( \frac{5+\epsilon}{6\epsilon+10}\gamma_2, \gamma_4 )\}}
\end{align*}
for some constant $C>0$ only depending on $\epsilon$.

Similar arguments apply to $\nu^2_{\Zb^M}$ so we omit the details. This completes the proof.
\end{proof}

%Lemma \ref{lem:trun} is a linear algebra result that ensures the ``truncated" sequence $\{\Xb_t^M\}_{t \in \Z}$ has the same $\tau$-mixing property as the original one. 

\begin{proof}[Proof of Lemma \ref{lem:trun}]
	Fix $\bu_1, \bu_2, \bv_1, \bv_2 \in \R^p$ with unit length and $\sigma_u \geq 0$. For any $\sigma_v \geq \sigma_u$, we perform singular value decomposition for matrix $\Xb(\sigma_v) := \sigma_u \bu_1 \bu_2^\T - \sigma_v \bv_1 \bv_2^\T$. According to Equation (8) in \cite{brand2006fast}, the non-zero singular values of $\Xb(\sigma_v)$ are identical to those of 
	$$\mathbf{S}(\sigma_v) = \begin{bmatrix}
	\sigma_u  - \sigma_v \bu_1^\T \bv_1 \bv_2^\T \bu_2 & -\sigma_v\bu_1^\T\bv_1 \norm{\bv_2 - \bu_2\bu_2^\T\bv_2}_2\\
	\sigma_v\bu_2^\T\bv_2 \norm{\bv_1 - \bu_1\bu_1^\T\bv_1}_2 & \sigma_v^2\norm{\bv_1 - \bu_1\bu_1^\T\bv_1}_2\norm{\bv_2 - \bu_2\bu_2^\T\bv_2}_2\\
	\end{bmatrix}.$$
	For simplicity, denote $w = \bu_1^\T \bv_1 \bv_2^\T \bu_2$, $\tilde{\bv}_1 = \bv_1 - \bu_1\bu_1^\T\bv_1$, $\tilde{\bu}_1 = \bv_2 - \bu_2\bu_2^\T\bv_2$. Hence $\mathbf{S}(\sigma_v)$ could be rewritten as
	$$\mathbf{S}(\sigma_v) = \begin{bmatrix}\label{sglval}
	\sigma_u - \sigma_v w & -\sigma_v\bu_1^\T\bv_1 \norm{\tilde{\bv}_2}_2\\
	\sigma_v\bu_2^\T\bv_2 \norm{\tilde{\bv}_1}_2 & \sigma_v^2\norm{\tilde{\bv}_1}_2\norm{\tilde{\bv}_2}_2\\
	\end{bmatrix}.$$
	
	Using the calculation on Page 86 in \cite{blinn1996consider}, $\norm{\mathbf{S}(\sigma_v)} = Q(\sigma_v) + R(\sigma_v)$, where 
	\begin{align*}
	Q(\sigma_v) &:= \sqrt{(\sigma_u - \sigma_vw + \sigma_v\norm{\tilde{\bv}_1}_2\norm{\tilde{\bv}_2}_2)^2 + \sigma_v^2(\bu_1^\T\bv_1 \norm{\tilde{\bv}_2}_2 + \bu_2^\T\bv_2 \norm{\tilde{\bv}_1}_2)^2}/2,\\
	R(\sigma_v) &:=  \sqrt{(\sigma_u - \sigma_vw - \sigma_v\norm{\tilde{\bv}_1}_2\norm{\tilde{\bv}_2}_2)^2 + \sigma_v^2(\bu_1^\T\bv_1 \norm{\tilde{\bv}_2}_2 - \bu_2^\T\bv_2 \norm{\tilde{\bv}_1}_2)^2}/2.
	\end{align*}
	
	We are left to show that both $Q$ and $R$ are non-deceasing function of $\sigma_v\in[\sigma_u,\infty]$. By differentiating $Q, R$ with respect to $\sigma_v$, we obtain
	\begin{align*}
	\frac{dQ}{d\sigma_v} &= c_Q(\sigma_v)[\sigma_u(\norm{\tilde{\bv}_1}_2\norm{\tilde{\bv}_2}_2 - w)  + \sigma_v\{w^2 +\norm{\tilde{\bv}_1}^2_2\norm{\tilde{\bv}_2}^2_2 + (\bu_1^\T\bv_1)^2 \norm{\tilde{\bv}_2}^2_2 +(\bu_2^\T\bv_2)^2 \norm{\tilde{\bv}_1}_2^2 \}],\\
	\frac{dR}{d\sigma_v} &= c_Q(\sigma_v)[-\sigma_u(\norm{\tilde{\bv}_1}_2\norm{\tilde{\bv}_2}_2 + w)  + \sigma_v\{w^2 +\norm{\tilde{\bv}_1}^2_2\norm{\tilde{\bv}_2}^2_2 + (\bu_1^\T\bv_1)^2 \norm{\tilde{\bv}_2}^2_2 +(\bu_2^\T\bv_2)^2 \norm{\tilde{\bv}_1}_2^2 \}]
	\end{align*}
	for some nonnegative constants $c_Q(\sigma_v),\ c_R(\sigma_v)$. 
	
	By simple algebra, we have $w^2 +\norm{\tilde{\bv}_1}^2_2\norm{\tilde{\bv}_2}^2_2 + (\bu_1^\T\bv_1)^2 \norm{\tilde{\bv}_2}^2_2 +(\bu_2^\T\bv_2)^2 \norm{\tilde{\bv}_1}_2^2 = 1$ so that 
	\[
	\frac{dQ}{d\sigma_v} = c_Q(\sigma_v)[\sigma_u(\norm{\tilde{\bv}_1}_2\norm{\tilde{\bv}_2}_2 - w)  + \sigma_v].
	\]
	Moreover, since $\bu_1, \bu_2, \bv_1, \bv_2 \in \R^p$ are all length 1, we have $|w| \leq 1$ by Cauchy-Schwartz. Hence 
	by the fact that $\sigma_v \geq \sigma_u \geq 0$, we have $\frac{dQ}{d\sigma_v} \geq 0$. On the other hand, denote $a :=\bu_1^\T\bv_1 $ and $b:= \bu_2^\T\bv_2 $ and again by Cauchy-Schwartz we have $|a| \leq 1,\ |b| \leq 1$. In addition, we have
	\begin{align*}
	\norm{\tilde{\bv}_1}_2 
	&= \sqrt{(\bv_1-\bu_1\bu_1^\T\bv_1)^\T(\bv_1-\bu_1\bu_1^\T\bv_1)} \\
	& = \sqrt{\bv_1^\T\bv_1 - \bv_1^\T\bu_1 \bu_1^\T\bv_1 - \bv_1^\T\bu_1\bu_1^\T\bv_1 + \bv_1^\T\bu_1 \bu_1^\T\bu_1\bu_1^\T\bv_1}\\
	&=\sqrt{1-a^2}.
	\end{align*}
	Similarly, we have $\norm{\tilde{\bv}_2}_2  = \sqrt{1-b^2}$. Then
	\begin{align*}
	\frac{dR}{d\sigma_v} &= c_Q(\sigma_v)\{\sigma_v-\sigma_u(\norm{\tilde{\bv}_1}_2\norm{\tilde{\bv}_2}_2 + w)\}\\
	&\geq c_Q(\sigma_v)\sigma_u(1-\norm{\tilde{\bv}_1}_2\norm{\tilde{\bv}_2}_2 - w)\\
	&\geq c_Q(\sigma_v)\sigma_u(1-\sqrt{(1-a^2)(1-b^2)} -ab).
	\end{align*}
	Since $(1-ab)^2\geq (1-a^2)(1-b^2)$ and $|ab|\leq 1$, we obtain $	\frac{dR}{d\sigma_v} \geq 0$. Therefore we have shown that $\norm{\mathbf{S}(\sigma_v)} = Q(\sigma_v) + R(\sigma_v)$ is a non-decreasing function with respect to $\sigma_v$. 
	
	%In particular, without loss of generality, assume $\sigma_u \leq \sigma_v$. 
	Obviously $\norm{M \bv_1 \bv_2^\T - M \bu_1 \bu_2^\T} \leq \norm{\sigma_u \bv_1 \bv_2^\T - \sigma_u \bu_1 \bu_2^\T}$ since $0 <M \leq \sigma_u$. Applying the monotonicity property proved above, we have $\norm{\sigma_u \bv_1 \bv_2^\T - \sigma_u \bu_1 \bu_2^\T} \leq \norm{\sigma_u \bv_1 \bv_2^\T - \sigma_v \bu_1 \bu_2^\T}$. This completes the proof.
\end{proof}

%Now we prove Proposition \ref{prop:gau}, which is a moment upper bound for a general Gaussian process assuming nothing on the autocovariance structure.

%Now we provide proofs for Lemma \ref{lem:gau1} and Lemma \ref{lem:gau2}.
\begin{proof}[Proof of Lemma \ref{lem:gau1}]
	By the observation in the proof of Proposition \ref{prop:gau}, we have
	$$\E\norm{\hat{\mathbf{\Sigma}}_0- \mathbf{\Sigma}_0} \leq \frac{2}{n} \E\norm{\Yb \tilde{\Yb}^{\T}} = \frac{2}{n} \E\Big(\sup_{\bu,\bv \in \mathbb{S}^{p-1}} \sum_{k = 1}^{n} \bu^{\T}\bY_k\tilde{\bY}_k^{\T}\bv\Big ) :=  \frac{2}{n} \E\Big(\sup_{\bu,\bv \in \mathbb{S}^{p-1}} W_{\bu,\bv} \Big ).$$
	Now consider
	\begin{align*}
	(W_{\bu,\bv} - W_{\bu', \bv'})^2 
	= &\Big(\sum_{k = 1}^{n} \bu^{\T}\bY_k\tilde{\bY}_k^{\T}\bv - \sum_{k = 1}^{n} {\bu'} ^{\T}\bY_k\tilde{\bY}_k^{\T}\bv'\Big)^2\\
	= &\Big(\sum_{k = 1}^{n} \bu^{\T}\bY_k\tilde{\bY}_k^{\T}\bv - \sum_{k = 1}^{n} {\bu'}^{\T}\bY_k\tilde{\bY}_k^{\T}\bv +\sum_{k = 1}^{n} {\bu'}^{\T}\bY_k\tilde{\bY}_k^{\T}\bv-
	\sum_{k = 1}^{n} {\bu'} ^{\T}\bY_k\tilde{\bY}_k^{\T}\bv'\Big)^2\\
	=& \Big(\sum_{k = 1}^{n} (\bu- {\bu'})^{\T}\bY_k\tilde{\bY}_k^{\T}\bv  +\sum_{k = 1}^{n} {\bu'}^{\T}\bY_k\tilde{\bY}_k^{\T}(\bv-\bv')\Big)^2\\
	\leq &2\Big(\sum_{k = 1}^{n} (\bu- {\bu'})^{\T}\bY_k\tilde{\bY}_k^{\T}\bv\Big)^2 + 2\Big(\sum_{k = 1}^{n} {\bu'}^{\T}\bY_k\tilde{\bY}_k^{\T}(\bv-\bv')\Big)^2\\
	=& 2\sum_{d = 0}^{n-1} \sum_{|j-k| =d} (\bu- {\bu'})^{\T}\bY_j \cdot (\bu- {\bu'})^{\T}\bY_k \cdot\bv^{\T}\tilde{\bY}_j\cdot \bv^{\T}\tilde{\bY}_k \\
	&+
	2\sum_{d = 0}^{n-1} \sum_{|j-k| =d} {\bu'}^{\T}\bY_j \cdot  {\bu'}^{\T}\bY_k \cdot(\bv-\bv')^{\T}\tilde{\bY}_j\cdot (\bv-\bv')^{\T}\tilde{\bY}_k.
	\end{align*}
	Now denote the conditional expectation $\E_{\tilde{\Yb}} := \E(\cdot|\tilde{\Yb})$. Then, 
	\begin{align*}
	&\E_{\tilde{\Yb}}(W_{\bu,\bv} - W_{\bu', \bv'})^2 \\		
	\leq &2 (\bu- {\bu'})^{\T} \mathbf{\Sigma}_0(\bu- {\bu'})\sum_{j = 1}^n \bv^{\T}\tilde{\bY}_j\tilde{\bY}_j^{\T} \bv
	+2\sum_{d = 1}^{n-1} (\bu- {\bu'})^{\T}(\mathbf{\Sigma}_d + \mathbf{\Sigma}_d^{\T})(\bu- {\bu'})\sum_{(j-k) =d} \bv^{\T}\tilde{\bY}_j\cdot \bv^{\T}\tilde{\bY}_k\\
	&+2 \sum_{j,k = 1}^n \bu'^{\T}\mathbf{\Sigma}_{|j-k|}\bu' \cdot (\bv - \bv')^{\T} \tilde{\bY}_j \cdot (\bv - \bv')^{\T}\tilde{\bY}_k    \\
	\leq & 2(\bu- {\bu'})^{\T}\Big(\mathbf{\Sigma}_0 + 2\sum_{d = 1}^{n-1}\tilde{\mathbf{\Sigma}}_d\Big)(\bu- {\bu'})\sum_{j = 1}^n \bv^{\T}\tilde{\bY}_j\tilde{\bY}_j^{\T} \bv + 2\Big(\norm{\mathbf{\Sigma}_0} + 2\sum_{d = 1}^{n-1} \norm{\mathbf{\Sigma}_d}\Big)\sum_{j = 1}^n (\bv-\bv')^{\T}\tilde{\bY}_j\tilde{\bY}_j^{\T} (\bv-\bv')\\
	\leq & 2\norm{\Big(\mathbf{\Sigma}_0 + 2\sum_{d = 1}^{n-1}\tilde{\mathbf{\Sigma}}_d\Big)^{\frac{1}{2}}(\bu - \bu')}^2 \norm{\tilde{\Yb}}^2
	+2\Big(\norm{\mathbf{\Sigma}_0} + 2\sum_{d = 1}^{n-1} \norm{\mathbf{\Sigma}_d}\Big)\norm{(\bv-\bv')^{\T}\tilde{\Yb}}^2,
	\end{align*}
	where the second inequality is followed by defining $\tilde{\mathbf{\Sigma}}_d := (\Ub_d \mathbf{\Lambda}_d \Ub_d^{\T} +  \Vb_d \mathbf{\Lambda}_d \Vb_d^{\T})/2$. Here $\Ub_d,  \Vb_d, \mathbf{\Lambda}_d$ are left singular vectors, right singular vectors and singular values of $\mathbf{\Sigma}_d$ for all $1 \leq d\leq n-1$. Note that $\tilde{\mathbf{\Sigma}}_d$ are symmetric and positive semidefinite for all $d$, and hence so is $\mathbf{\Sigma}_0 + 2\sum_{d = 1}^{n-1}\tilde{\mathbf{\Sigma}}_d$. 
	
	Define the following Gaussian process:
	$$Y_{\bu,\bv} :=  \sqrt{2}\norm{\tilde{\Yb}}\bu^{\T}\Big(\mathbf{\Sigma}_0 + 2\sum_{d = 1}^{n-1}\tilde{\mathbf{\Sigma}}_d\Big)^{\frac{1}{2}}\bg + \sqrt{2}\Big(\norm{\mathbf{\Sigma}_0} + 2\sum_{d = 1}^{n-1} \norm{\mathbf{\Sigma}_d}\Big)^{\frac{1}{2}} \bv^{\T} \tilde{\Yb}\bg',$$
	where $\bg, \bg'$ are independent standard Gaussian random vectors in $\R^p$ and $\R^n$ respectively. Thus by previous inequality, we have
	$$\E_{\tilde{\Yb}}(W_{\bu,\bv} - W_{\bu', \bv'})^2 \leq \E_{\tilde{\Yb}}(Y_{\bu,\bv} - Y_{\bu', \bv'})^2.$$
	Hence by Slepian-Fernique inequality \citep{slepian1962one}, we have
	\begin{align*}
	&\E_{\tilde{\Yb}} \sup_{\bu,\bv \in \mathbb{S}^{p-1}} W_{\bu,\bv} \\
	\leq& \E_{\tilde{\Yb}} \sup_{\bu,\bv \in \mathbb{S}^{p-1}} Y_{\bu,\bv} \\
	=& \sqrt{2} \norm{\tilde{\Yb}} \cdot\E\sup_{\bu \in \mathbb{S}^{p-1}}\bu^{\T}\Big(\mathbf{\Sigma}_0 + 2\sum_{d = 1}^{n-1}\tilde{\mathbf{\Sigma}}_d\Big)^{\frac{1}{2}}\bg
	+ \sqrt{2}\Big(\norm{\mathbf{\Sigma}_0} + 2\sum_{d = 1}^{n-1} \norm{\mathbf{\Sigma}_d}\Big)^{\frac{1}{2}}	\cdot\E_{\tilde{\Yb}} \sup_{\bv \in \mathbb{S}^{p-1}}\bv^{\T} \tilde{\Yb} \bg'\\
	\leq &\sqrt{2}  \norm{\tilde{\Yb}} \cdot \E\norm{\Big(\mathbf{\Sigma}_0 + 2\sum_{d = 1}^{n-1}\tilde{\mathbf{\Sigma}}_d\Big)^{\frac{1}{2}}\bg} + \sqrt{2}\Big(\norm{\mathbf{\Sigma}_0} + 2\sum_{d = 1}^{n-1} \norm{\mathbf{\Sigma}_d}\Big)^{\frac{1}{2}}\cdot \E_{\tilde{\Yb}}\norm{ \tilde{\Yb} \bg'}\\
	\leq & \sqrt{2}  \norm{\tilde{\Yb}}  \cdot \sqrt{\tr\bigg(\mathbf{\Sigma}_0 + 2\sum_{d = 1}^{n-1}\tilde{\mathbf{\Sigma}}_d\bigg)} + \sqrt{2}\Big(\norm{\mathbf{\Sigma}_0} + 2\sum_{d = 1}^{n-1} \norm{\mathbf{\Sigma}_d}\Big)^{\frac{1}{2}}\cdot \sqrt{\tr(\tilde{\Yb} \tilde{\Yb}^{\T})}.
	\end{align*}
	Taking expectation with respect to $\tilde{\Yb}$ and using the fact that $\tilde{\Yb}$ is an independent copy of $\Yb$, we obtain
	\begin{align*}
	\E \sup_{\bu,\bv \in \mathbb{S}^{p-1}} W_{\bu,\bv}  \leq \sqrt{2}  \E\norm{\Yb}  \cdot \sqrt{\tr\bigg(\mathbf{\Sigma}_0 + 2\sum_{d = 1}^{n-1}\tilde{\mathbf{\Sigma}}_d\bigg)} + \sqrt{2}\sqrt{\norm{\mathbf{\Sigma}_0} + 2\sum_{d = 1}^{n-1} \norm{\mathbf{\Sigma}_d}}\cdot \sqrt{n\tr(\mathbf{\Sigma}_0)}.
	\end{align*}
	This completes the proof of Lemma \ref{lem:gau1}.
\end{proof}

\begin{proof}[Proof of Lemma \ref{lem:gau2}]
	Define $W_{\bu, \bv}:= \bu^{\T}\Yb\bv $. Then,
	\begin{align*}
	\E (W_{\bu, \bv} - W_{\bu', \bv'})^2 
	=& \E (\bu^{\T}\Yb\bv - \bu'^{\T}\Yb\bv')^2\\
	\leq &2\E ((\bu - \bu')^{\T}\Yb\bv)^2 + 2\E (\bu'^{\T}\Yb(\bv-\bv'))^2\\
	= &2\sum_{i,j} (\bu - \bu')^{\T}\mathbf{\Sigma}_{|i-j|}(\bu - \bu')\bv_i\bv_j
	+ 2 \sum_{i,j} \bu'^{\T}\mathbf{\Sigma}_{|i-j|}\bu' (\bv_i-\bv_i^{'})(\bv_j - \bv_j^{'}).
	\end{align*}
	
	In addition, define
	\begin{align*}
	&\mathbf{\Sigma}_L := \left[ {\begin{array}{cccc}
		\mathbf{\Sigma}_0 & \mathbf{\Sigma}_1 & \cdots & \mathbf{\Sigma}_{n-1}\\
		\mathbf{\Sigma}_1^{\T}  & \mathbf{\Sigma}_0 & \cdots &  \mathbf{\Sigma}_{n-2}\\
		\cdots &  \cdots & \cdots & \cdots \\
		\mathbf{\Sigma}_{n-1}^{\T} & \mathbf{\Sigma}_{n-2}^{\T} & \cdots & \mathbf{\Sigma}_{0}\\
		\end{array} } \right],
	\mathbf{\Sigma}_{L,\bu} := \left[ {\begin{array}{cccc}
		\bu^{\T} & \mathbf{0} & \cdots & \mathbf{0}\\
		\mathbf{0} & \bu^{\T} & \cdots & \mathbf{0}\\
		\cdots &  \cdots & \cdots & \cdots \\
		\mathbf{0} & \mathbf{0} & \cdots & \bu^{\T}\\
		\end{array} } \right] \mathbf{\Sigma}_L 
	\left[ {\begin{array}{cccc}
		\bu & \mathbf{0} & \cdots & \mathbf{0}\\
		\mathbf{0} & \bu& \cdots & \mathbf{0}\\
		\cdots &  \cdots & \cdots & \cdots \\
		\mathbf{0} & \mathbf{0} & \cdots & \bu\\
		\end{array} } \right],\\
	&\mathbf{\Sigma}^{\circ}_{\bu} := (\bu^{\T}\mathbf{\Sigma}_0\bu) \mathbf{1}_n\mathbf{1}_n^{\T},
	\hspace{2.7cm} \mathbf{\Sigma}^{\diamond} := \norm{\mathbf{\Sigma}_0}\mathbf{1}_n\mathbf{1}_n^{\T}.
	\end{align*}
	Since $\mathbf{\Sigma}_L$ is a positive semi-definite matrix, we have
	$$\mathbf{\Sigma}_{L,\bu} \preceq \mathbf{\Sigma}^{\circ}_{\bu} \preceq \mathbf{\Sigma}^{\diamond}$$
	for all $\bu \in \mathbb{S}^{p-1}$, where ``$\preceq$" is the Loewner partial order of Hermitian matrices. Hence,
	\begin{align*}
	&\E (W_{\bu, \bv} - W_{\bu', \bv'})^2 
	\leq  2\norm{\Big(\mathbf{\Sigma}_0 + 2\sum_{d = 1}^{n-1}\tilde{\mathbf{\Sigma}}_d\Big)^{\frac{1}{2}}(\bu - \bu')}^2 + 
	2\norm{\mathbf{\Sigma}_0}(\bv -  \bv')^{\T}\mathbf{1}_n\mathbf{1}_n^{\T}(\bv -  \bv').
	\end{align*}
	Then define the following Gaussian process:
	$$Y_{\bu, \bv} := \sqrt{2}\bu^{\T}\Big(\mathbf{\Sigma}_0 + 2\sum_{d = 1}^{n-1}\tilde{\mathbf{\Sigma}}_d\Big)^{\frac{1}{2}}\bg + \sqrt{2}\norm{\mathbf{\Sigma}_0}^{\frac{1}{2}} \bv^{\T}\bg',$$
	where $\bg \in \R^p, \bg' \in \R^n$ are independent Gaussian random vectors with mean $\mathbf{0}$ and covariance matrices $\Ib_p$ and  $\mathbf{1}_n\mathbf{1}_n^{\T}$ respectively. Thus by previous inequality, we have
	$$\E(W_{\bu,\bv} - W_{\bu', \bv'})^2 \leq \E(Y_{\bu,\bv} - Y_{\bu', \bv'})^2.$$
	Hence by Slepian-Fernique inequality, we have
	\begin{align*}
	&\E \sup_{\bu,\bv \in \mathbb{S}^{p-1}} W_{\bu,\bv} 
	\leq \E \sup_{\bu,\bv \in \mathbb{S}^{p-1}} Y_{\bu,\bv} \\
	= & \sqrt{2} \E\sup_{\bu \in \mathbb{S}^{p-1}}\bu^{\T}\Big(\mathbf{\Sigma}_0 + 2\sum_{d = 1}^{n-1}\tilde{\mathbf{\Sigma}}_d\Big)^{\frac{1}{2}}\bg
	+ \sqrt{2}\norm{\mathbf{\Sigma}_0}^{\frac{1}{2}}	\cdot\E \sup_{\bv \in \mathbb{S}^{p-1}}\bv^{\T} \bg'\\
	\leq &\sqrt{2} \E\norm{\Big(\mathbf{\Sigma}_0 + 2\sum_{d = 1}^{n-1}\tilde{\mathbf{\Sigma}}_d\Big)^{\frac{1}{2}}\bg} + \sqrt{2}\norm{\mathbf{\Sigma}_0}^{\frac{1}{2}}\cdot \E\norm{ \bg'}\\
	\leq & \sqrt{2} \sqrt{\tr\bigg(\mathbf{\Sigma}_0 + 2\sum_{d = 1}^{n-1}\tilde{\mathbf{\Sigma}}_d\bigg)} + \sqrt{2}\norm{\mathbf{\Sigma}_0}^{\frac{1}{2}} \cdot \sqrt{n}.
	\end{align*}
	This completes the proof of Lemma \ref{lem:gau2}.
\end{proof}

\subsection{Proof of results in Section \ref{sec:app}}\label{subsec:pfsec3}

\begin{proof}[Proof of Theorem \ref{ex1}]
	We first examine Assumptions \ref{A1} and \ref{A4}. First of all, we will study VAR($1$) model, i.e., $\bY_t = \Ab \bY_{t-1} + \bE_t$. Notice that for VAR($1$), we could rewrite the original sequence as a moving-average model, i.e., $\bY_t = \sum_{j = 0}^\infty \Ab^j\bE_{t-j}$. For any $\bu\in \R^p$, we have
	\begin{align*}
	\norm{\bu^\T \bY_t}_{\psi_2} &= \Big\|\sum_{j = 0}^\infty \bu^\T\Ab^j\bE_{t-j}\Big\|_{\psi_2}\\
	&\leq C\Big(\sum_{j = 0}^\infty \norm{\bu^\T\Ab^j\bE_{t-j}}_{\psi_2}^2 \Big)^{\frac{1}{2}}\\
	&\leq Cc'\Big(\sum_{j = 0}^\infty \norm{\bu^\T\Ab^j\bE_{t-j}}_{L(2)}^2 \Big)^{\frac{1}{2}}= Cc'\norm{\bu^\T\bY_t}_{L(2)}
	\end{align*}
	for some universal constant $C>0$. Here the second line and last equality are followed by the fact that $\{\bE_t\}_{t \in \Z}$ is a sequence of independent random vector, and the third line by the moment assumption on $\{\bE_t\}_{t \in \Z}$. Since $\bY_{t-1}$ is a stable process when $\norm{\Ab} <1$, $\norm{\bu^\T \bY_t}_{\psi_2}\leq Cc'\norm{\bu^\T\bY_t}_{L(2)} <\infty$ for all $\bu \in \R^p$. 
	
	Denote $\bar{\bY}_t := (\bY_t^\T \dots \bY_{t-d}^\T)^\T$ and $\bar{\bE}_t := (\bE_t^\T ~\mathbf{0}^\T \dots \mathbf{0}^\T)^\T$. For $\{\bY_t\}_{t \in \Z}$ generated from a VAR($d$) model, $\{\bar{\bY}_t\}_{t \in \Z}$ is a VAR($1$) process, i.e., $\bar{\bY}_t  = \bar{\Ab}\cdot\bar{\bY}_{t-1} + \bar{\bE}_t$. Thus by previous argument, taking any $\bv \in \R^{p(d+1)}$ where only the first $p$ digits are non-zero and denoting $\bv' \in \R^p$ to be first-$p$ part of $\bv$, we have $\norm{\bv'^\T \bY_t}_{\psi_2} = \norm{\bv^\T \bar{\bY}_t}_{\psi_2} \leq  C\norm{\bv^\T \bar{\bY}_t}_{L(2)} = C\norm{\bv'^\T \bar{\bY}_t}_{L(2)} <\infty$ for some constant $C>0$ only depending on $c'$ where the last inequality is followed by the fact that $\{\bY_t\}$ is a stable process (see Lemma \ref{lem:rhoA}).  Assumptions \ref{A1} and \ref{A4} are verified.
	
	Then we examine Assumption \ref{A2}. Without loss of generality, take $j = 0$ in Assumption \ref{A2}. Let $\{\tilde{\bY}_t\}_{t = 1-d}^0$ be a sequence of random vectors independent of $\{\bY_t\}_{t \leq 0}$ and identically distributed as $\{\bY_{t}\}_{t = 1-d}^0$. Define $\tilde{\bY}_t = \Ab_1\tilde{\bY}_{t-1}+ \dots+\Ab_d\tilde{\bY}_{t-d} + \bE_t$ for every $t >0$. It is obvious that $\{\tilde{\bY}_t\}_{t>0}$ is independent of $\{\tilde{\bY}_t\}_{t \leq 0}$ and identically distributed as $\{\bY_t\}_{t > 0}$. Moreover, for any $t \geq 1$, we have
	\begin{align*}
	\norm{\norm{\bY_t - \tilde{\bY}_t}_2}_{L(1+\epsilon)}
	&=\{\E\norm{\Ab_1\bY_{t-1}+ \dots+\Ab_d\bY_{t-d} +\bE_t- (\Ab_1\tilde{\bY}_{t-1}+ \dots+\Ab_d\tilde{\bY}_{t-d}+\bE_t)}_2^{1+\epsilon  }\}^{\frac{1}{1+\epsilon}} \\
	& \leq  \{\E\norm{\Ab_1(\bY_{t-1}-\tilde{\bY}_{t-1})+ \dots+\Ab_d(\bY_{t-d}-\tilde{\bY}_{t-d}) }_2^{ 1+\epsilon }\}^{\frac{1}{1+\epsilon}} \\
	& \leq \sum_{k= 1}^d a_k \{\E\norm{\bY_{t-k} - \tilde{\bY}_{t-k}}_2^{ 1+\epsilon  }\}^{\frac{1}{1+\epsilon}},
	\end{align*}
	where the third line follows by $\norm{\cdot}_{L(1+\epsilon)}$ is a norm for $\epsilon > 0$.
	Denoting $\phi_t= \norm{\norm{\bY_t - \tilde{\bY}_t}_2}_{L(1+\epsilon)}$, we have $\phi_t \leq \sum_{k= 1}^d a_k\phi_{t-k}$. Let $\bv$ be the unit vector with 1 at first position and 0 elsewhere. Then by iteration, we have
	$$\bv^{\T}(\phi_t, \dots, \phi_{t-d+1})^{\T} \leq \bv^{\T}\bar{\Ab}^t(\phi_0, \dots, \phi_{1-d})^{\T}  \leq \norm{\bar{\Ab}^t}\norm{(\phi_0, \dots, \phi_{1-d})^{\T}}_2.$$
	Note that $\phi_t= C\kappa_*$ for $t \leq 0$ by Assumption \ref{A1} for some constant $C>0$ only depending on $\epsilon$. By the following lemma which provides sufficient and necessary conditions for matrix $\bar{\Ab}$ to have spectral radius strictly less than 1, we could choose some arbitary $\rho_1$ such that $\rho(\bar{\Ab})< \rho_1<1$. 
	\begin{lemma}\label{lem:rhoA}
		For $\bar{\Ab}$ defined above, $\rho(\bar{\Ab}) <1$ if and only if $\sum_{k = 1}^d a_k <1$, where $\rho(\bar{\Ab})$ is the spectral radius of $\bar{\Ab}$.
	\end{lemma}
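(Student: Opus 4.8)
The plan is to exploit the fact that $\bar{\Ab}$ is a companion matrix, so its eigenvalues are exactly the roots of an explicit polynomial, and then argue about the moduli of those roots using only the nonnegativity of the coefficients $a_1,\dots,a_d$ (recall these arise as spectral-norm bounds $\norm{\Ab_k}\le a_k$, hence $a_k\ge 0$). Expanding the determinant $\det(\lambda\Ib_d-\bar{\Ab})$ along the last column (or invoking the standard companion-matrix formula) shows that the characteristic polynomial is
\[
p(\lambda)=\lambda^d-a_1\lambda^{d-1}-a_2\lambda^{d-2}-\cdots-a_{d-1}\lambda-a_d,
\]
so that $\rho(\bar{\Ab})=\max\{|\lambda|:p(\lambda)=0\}$. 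The lemma thus reduces to the claim that all roots of $p$ lie strictly inside the unit disk if and only if $\sum_{k=1}^d a_k<1$, which I would establish by proving the two implications separately.

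For the direction $\sum_{k=1}^d a_k<1\Rightarrow\rho(\bar{\Ab})<1$, I would argue by contradiction. Suppose $\lambda$ is a root with $|\lambda|\ge 1$. Then $\lambda^d=\sum_{k=1}^d a_k\lambda^{d-k}$, and the triangle inequality together with $a_k\ge 0$ and $|\lambda|^{d-k}\le|\lambda|^{d-1}$ (valid since $|\lambda|\ge 1$ and $d-k\le d-1$) gives
\[
|\lambda|^d\le\sum_{k=1}^d a_k|\lambda|^{d-k}\le\Big(\sum_{k=1}^d a_k\Big)|\lambda|^{d-1}.
\]
Dividing by $|\lambda|^{d-1}>0$ yields $|\lambda|\le\sum_{k=1}^d a_k<1$, contradicting $|\lambda|\ge1$. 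Hence every eigenvalue has modulus strictly below $1$, so $\rho(\bar{\Ab})<1$.

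For the converse I would prove the contrapositive: if $\sum_{k=1}^d a_k\ge 1$, then $\rho(\bar{\Ab})\ge 1$. Here I restrict $p$ to the positive real axis and use the nonnegativity of the coefficients in an intermediate-value argument. Since $p(1)=1-\sum_{k=1}^d a_k\le 0$ while $p(\lambda)\to+\infty$ as $\lambda\to+\infty$, continuity furnishes a real root $\lambda_0\ge 1$, whence $\rho(\bar{\Ab})\ge\lambda_0\ge1$. I expect this converse to be the more delicate step, precisely because it is the place where the sign condition $a_k\ge0$ is essential: the triangle-inequality bound of the first part is one-directional and cannot by itself force a large eigenvalue, whereas the real-root argument genuinely needs the coefficients to be nonnegative so that $p(1)=1-\sum_k a_k$ and so that $p$ is guaranteed to cross zero on $[1,\infty)$. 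Combining the two implications gives the stated equivalence and guarantees the existence of the constant $\rho_1\in(\rho(\bar{\Ab}),1)$ used in Theorem \ref{ex1}.
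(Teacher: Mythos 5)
Your proposal is correct and follows essentially the same route as the paper's own proof: both identify the characteristic polynomial $\lambda^d - a_1\lambda^{d-1} - \cdots - a_d$ of the companion matrix, prove $\sum_k a_k < 1 \Rightarrow \rho(\bar{\Ab}) < 1$ via the triangle inequality and the bound $|\lambda|^{d-k} \le |\lambda|^{d-1}$ (the paper uses $|z|^k \le |z|^d$, a trivially equivalent normalization), and prove the converse by an intermediate-value argument from $p(1) = 1 - \sum_k a_k \le 0$ and $p(\lambda) \to +\infty$. The only differences are organizational (you phrase the second direction as a contrapositive where the paper phrases both as contradictions), not mathematical.
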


\begin{proof}[Proof of Lemma \ref{lem:rhoA}]
	The result is well known and here we include a proof merely for completeness. First of all, we prove the sufficient condition. A key observation is that the characteristic equation $\det(\bar{\Ab} -\lambda\Ib_d) = 0$ for matrix $\bar{\Ab}$ is 
	$$f(\lambda)=\lambda^d - a_1\lambda^{d-1} - \dots - a_{d-1}\lambda^1 - a_d = 0.$$
	Assume $\sum_{j= 1}^d a_j \geq 1$. We obtain $f(1) = 1-\sum_{j = 1}^d a_j \leq 0$ and $f(\infty) = \infty$. By continuity of $f(\lambda)$, there exists at least one root whose modulus is greater than or equal to 1. This contradicts with the fact that $\rho(\bar{\Ab})$ is strictly less than 1.
	
	Secondly, we prove the necessary condition. Suppose there exists a root $z \in \mathbb{C}$ (the set of complex numbers) of $f(\lambda)$ such that $|z| \geq 1$. Here $|z|$ is the modulus of $z$. Then
	$$|z|^d = |a_1z^{d-1} + \dots + a_{d-1}z^1 +a_d| \leq a_1|z|^{d-1} + \dots + a_{d-1}|z|^1 + a_d.$$
	Since $|z| \geq 1$, we have $|z|^k \leq |z|^{d}$ for $0 \leq k \leq d-1$. Hence 
	$|z|^d \leq (a_1+\dots + a_d)|z|^d$ implies $a_1+\dots + a_d \geq 1.$
	This contradicts the fact that $\sum_{j=1}^d a_j$ is strictly less than 1. This completes the proof.
\end{proof}

	 By Gelfand's formula, there exists a $K >0$, such that for all $t \geq K$, $\norm{\bar{\Ab}^t} < \rho_1^t$. For $t < K$, we have
	$$\phi_t \leq 2d\kappa_*\bigg(\frac{\norm{\bar{\Ab}}}{\rho_1}\bigg)^K\rho_1^t.$$
	For $t \geq K$, we have $\phi_t \leq Cd\kappa_*\rho_1^t$ for some constant $C>0$ only depending on $\epsilon$. Taking $\gamma_1 = Cd(\kappa_*/\kappa_1)(\norm{\bar{\Ab}}/\rho_1)^K$ for some constant $C>0$ only depending on $\epsilon$ and $\gamma_2 = \log(\rho^{-1}_1)$ verifies Assumption \ref{A2}.
	
	Lastly, we verify Assumption \ref{A3}. Following the same construction as in verifying Assumption \ref{A2}, we have for any $\bu \in \mathbb{S}^{p-1}$,
	\begin{align*}
	&\norm{(\bY_t - \tilde{\bY}_t)^\T\bu}_{L(1+\epsilon)}\\
	=&( \E|\{
	\Ab_1\bY_{t-1}+ \dots+\Ab_d\bY_{t-d} +\bE_t- (\Ab_1\tilde{\bY}_{t-1}+ \dots+\Ab_d\tilde{\bY}_{t-d}+\bE_t)
	\}^\T\bu|^{1+\epsilon})^{\frac{1}{1+\epsilon}} \\
	\leq & ( \E|\{
	\Ab_1\bY_{t-1}+ \dots+\Ab_d\bY_{t-d} - (\Ab_1\tilde{\bY}_{t-1}+ \dots+\Ab_d\tilde{\bY}_{t-d})
	\}^\T\bu|^{1+\epsilon})^{\frac{1}{1+\epsilon}} \\
	\leq &\sum_{k = 1}^d a_k\{\E|(\bY_{t-k}- \tilde{\bY}_{t-k})^\T\bu_k |^{1+\epsilon}\}^{\frac{1}{1+\epsilon}},
	\end{align*}
	for $\bu_k :=\Ab_k\bu/\norm{\Ab_k\bu}_2$, $k \in \{1,\dots, d\}$. The result follows as we follow the same arguments to verify Assumption \ref{A2}. This completes the proof of Theorem \ref{ex1}.
\end{proof}

\begin{proof}[Proof of Theorem \ref{ex2}]
	First of all, we verify Assumptions \ref{A1} and \ref{A4}. It is trivial that Assumptions \ref{A1} and \ref{A4} are satisfied if $W_t = 0$ almost surely for all $t \in \Z$. If $W_t \neq 0$ almost surely, then for all $\bu \in \R^p$,
	$\norm{\bu^\T\bY_t}_{\psi_2} \leq \norm{W_t}_{L(\infty)}\norm{\bu^\T\bE_t}_{\psi_2} \leq c'\kappa_W\norm{\bu^\T\bE_t}_{L(2)} \leq c'\frac{\kappa_W}{\inf_{t \in \Z}\norm{W_t}_{L(2)}}\norm{\bu^\T\bY_t}_{L(2)} <\infty$. This verifies Assumptions \ref{A1} and \ref{A4}.
	
	For Assumption \ref{A2}, without loss of generality, take $j = 0$. Since $\{W_t\}_{t \in \Z}$ is a sequence of uniformly bounded $\tau$-mixing random variables, we may find $\{\tilde{W}_t\}_{t >0}$ which is independent of $\{W_t\}_{t \leq 0}$, identically distributed as $\{W_t\}_{t > 0}$, and for any $t \geq 1$,
	$$\E|\tilde{W}_t - W_t| \leq \kappa_W \gamma_5\exp\{-\gamma_6(t-1)\}.$$
	Define $\tilde{\bY}_t:= \tilde{W}_t\bE_t$ for all $t \geq 1$. It is obvious that $\{\tilde{\bY}_t\}_{t >0}$ is independent of $\{\bY_t\}_{t \leq 0}$ and identically distributed as $\{\bY_t\}_{t > 0}$. Moreover, for any integer $t\geq 1$, 
	\begin{align*}
	\norm{\norm{\bY_t - \tilde{\bY}_t}_2}_{L(1+\epsilon)}
	&\leq (\E\norm{W_t \bE_t- \tilde{W}_t\bE_t}_2^{1+\epsilon})^{\frac{1}{1+\epsilon}} \\
	&\leq (\E|W_t - \tilde{W}_t|\cdot|W_t - \tilde{W}_t|^{1+\epsilon})^{1+\epsilon}(\E\norm{\bE_t}_2^{1+\epsilon})^{\frac{1}{1+\epsilon}}\\
	&\leq C\kappa_*' \kappa_W \gamma_5^{\frac{1}{1+\epsilon}}\exp\Big\{-\frac{1}{1+\epsilon}\gamma_6(t-1)\Big\}
	\end{align*}
	for some constant $C>0$ only depending on $\epsilon$.
	Taking $\gamma_1 = C\kappa_*' \kappa_W \gamma_5^{\frac{1}{1+\epsilon}}/\kappa_1$ and $\gamma_2 = \frac{1}{1+\epsilon}\gamma_6$ verifies Assumption $\ref{A2}$.
	
	For Assumption \ref{A3}, without loss of generality, take $j = 0$. Let $\{\tilde{\bY}_t\}_{t >0}$ be the same construction as above. For any integer $t\geq 1$, 
	\begin{align*}
	\sup_{\bu \in \mathbb{S}^{p-1}}\norm{(\bY_t - \tilde{\bY}_t)^{\T}\bu}_{L(1+\epsilon)}
	&= \sup_{\bu \in \mathbb{S}^{p-1}}\{\E|(W_t\bE_t - \tilde{W}_t\bE_t)^{\T}\bu|^{1+\epsilon}\}^{\frac{1}{1+\epsilon}}\\
	& = (\E|W_t-\tilde{W}_t|^{1+\epsilon})^{\frac{1}{1+\epsilon}}
	\sup_{\bu \in \mathbb{S}^{p-1}}(\E|\bE_t^{\T}\bu|^{1+\epsilon})^{\frac{1}{1+\epsilon}}\\
	&\leq C\kappa_1' \kappa_W \gamma_5^{\frac{1}{\epsilon}}\exp\Big\{-\frac{1}{1+\epsilon}\gamma_6(t-1)\Big\}
	\end{align*}
	for some constant $C>0$ only depending on $\epsilon$. Taking $\gamma_3 = C\kappa_1' \kappa_W \gamma_5^{\frac{1}{1+\epsilon}}/\kappa_1$ and $\gamma_4 = \frac{1}{1+\epsilon}\gamma_6$ verifies Assumption $\ref{A2}$. This completes the proof of Theorem \ref{ex2}.
\end{proof}

\begin{proof}[Proof of Theorem \ref{ex3}]
	We first verify Assumptions \ref{A2} and \ref{A3}. Without loss of generality, take $j = 0$ in Assumption \ref{A2}. Let $\tilde{\bY}_0$ be a random vector independent of $\{\bY_t\}_{t \leq 0}$ and identically distributed as $\bY_0$. Define $\tilde{\bY}_t = \Ab\tilde{\bY}_{t-1} + H(\tilde{\bY}_{t-1})\bE_t$ for every $t \geq 1$. It is obvious that $\{\tilde{\bY}_t\}_{t >0}$ is independent of $\{\bY_t\}_{t \leq 0}$ and identically distributed as $\{\bY_t\}_{t >0}$. We obtain for any $t\geq 1$,
	\begin{align*}
	\norm{\norm{\bY_t - \tilde{\bY}_t}_2}_{L(1+\epsilon)}
	&=[\E\norm{\Ab\bY_{t-1}+ H(\bY_{t-1})\bE_t-\{\Ab\tilde{\bY}_{t-1} + H(\tilde{\bY}_{t-1})\bE_t\}}_2^{1+\epsilon}]^{\frac{1}{1+\epsilon}} \\
	& \leq [\E\norm{\Ab\bY_{t-1}- \Ab\tilde{\bY}_{t-1}+ \{H(\bY_{t-1})- H(\tilde{\bY}_{t-1})\}\bE_t }_2^{1+\epsilon}]^{\frac{1}{1+\epsilon}} \\
	& \leq (a_1 + a_2)\norm{\norm{\bY_{t-1} - \tilde{\bY}_{t-1}}_2}_{L(1+\epsilon)}.
	\end{align*}
	By iteration, we obtain
	\begin{align*}
	\norm{\norm{\bY_t - \tilde{\bY}_t}_2}_{L(1+\epsilon)} \leq(a_1 + a_2)^t(\E\norm{\bY_0 - \tilde{\bY}_0}_2^{1+\epsilon})^{\frac{1}{1+\epsilon}}
	\leq C\kappa_*(a_1 + a_2)^t
	\end{align*}
	for some constant $C>0$ only depending on $\epsilon$. Taking $\gamma_1 = C\kappa_*/\kappa_1$ and $\gamma_2 = -\log(a_1+a_2)$ verifies Assumption \ref{A2}.
	
	For Assumption \ref{A3}, following the construction above, we have for any $\bu \in \mathbb{S}^{p-1}$ and $t\geq 1$,
	\begin{align*}
	\norm{(\bY_t- \tilde{\bY}_t)^{\T}\bu}_{L(1+\epsilon)}
	&=[\E|\{\Ab\bY_{t-1} + H(\bY_{t-1})\bE_t- (\Ab\tilde{\bY}_{t-1} + H(\tilde{\bY}_{t-1})\bE_t) \}^{\T}\bu|^{1+\epsilon}]^{\frac{1}{1+\epsilon}} \\
	& \leq[ \E|\{
	\Ab\bY_{t-1} - \Ab\tilde{\bY}_{t-1} + (H(\bY_{t-1})-H(\bY_{t-1}))\bE_t
	\}^{\T}\bu|^{1+\epsilon}]^{\frac{1}{1+\epsilon}} \\
	& \leq a_1\norm{(\bY_{t-1}- \tilde{\bY}_{t-1})^{\T}\bv}_{L(1+\epsilon)}
	+ a_2\frac{\kappa_{1}'}{\kappa_{*}'}\norm{\norm{\bY_{t-1} - \tilde{\bY}_{t-1}}_2}_{L(1+\epsilon)},
	\end{align*}
	where $\bv:= \Ab\bu/\norm{\Ab\bu}_2 \in \mathbb{S}^{p-1}$.
	By iteration, we obtain
	$$	\norm{(\bY_t - \tilde{\bY}_t)^{\T}\bu}_{L(1+\epsilon)} \leq C\{\kappa_1a_1^t + 2\kappa_*\frac{\kappa_{1}'}{\kappa_{*}'}
	a_2\sum_{\ell = 0}^{t-1}a_1^\ell(a_1+a_2)^{t-1-\ell}\} \leq C(a_1+a_2)^t\max(\kappa_*\frac{\kappa_{1}'}{\kappa_{*}'}, \kappa_1)$$
	for some constant $C>0$ only depending on $\epsilon$. Taking $\gamma_3 = C\max(\frac{\kappa_*\kappa_{1}'}{\kappa_1\kappa_{*}'}, 1)$ and  $\gamma_4 = -\log(a_1 + a_2)$ verifies Assumption \ref{A3}.
	
	By further assuming that $\{\bY_t\}$ is a stationary process and $H(\cdot)$ is uniformly bounded, we have that for  all $t \in \Z$, $\sup_{\bu \in \mathbb{S}^{p-1}}\norm{\bu^\T \bY_t}_{\psi_2}  \leq \norm{\Ab}\sup_{\bu \in \mathbb{S}^{p-1}} \norm{\bu^\T\bY_{t-1}}_{\psi_2} +D_2\sup_{\bv \in \mathbb{S}^{p-1}} \norm{\bv^\T\bE_t}$. By stationarity, this renders $\kappa_1 = \sup_{\bu \in \mathbb{S}^{p-1}}\norm{\bu^\T \bY_t}_{\psi_2} \leq \frac{1}{1-\norm{\Ab}}D_2\kappa_1' < \infty$. Similar argument applies to $\kappa_*$. This verifies Assumption \ref{A1} under additional assumptions and completes the proof of Theorem \ref{ex3}.
\end{proof}

\section*{Appendix}\label{appendix}

\appendix

\section{Proof of Theorem \ref{thm:bern}}\label{sec:apx}

In this appendix we present the proof of Theorem \ref{thm:bern}, which slightly extends the Bernstein-type inequality proven by \cite{banna2016bernstein} in which the random matrix sequence is assumed to be $\beta$-mixing. The proof is largely identical to theirs, and we include it here mainly for completeness.

%The appendix includes the proof of Theorem \ref{thm:bern}. The proof follows mainly the proof of Bernstein-type inequality for a sequence of geometric-decaying $\beta$-mixing random matrices by \cite{banna2016bernstein}, and we present it here mainly for presentation completeness. 

In the following, $\tau_k$ is abbreviate of $\tau(k)$ for $k \geq 1$. If a matrix $\Xb$ is positive semidefinite, denote it as $\Xb \succeq 0$.  For any $x>0$, we define $h(x) = x^{-2}(e^x - x - 1)$. Denote the floor, ceiling, and integer parts of a real number $x$ by $\lfloor x\rfloor$, $\lceil x \rceil$, and $[x]$. For any two real numbers $a,b$, denote $a\vee b := \max\{a,b\}$. Denote the exponential of matrix $\Xb$ as $\exp(\Xb) = \Ib_p + \sum_{q = 1}^{\infty}\Xb^q/q!$. Letting $\sigma_1$ and $\sigma_2$ be two sigma fields, denote $\sigma_1 \vee \sigma_2$ to be the smallest sigma field that contains $\sigma_1$ and $\sigma_2$ as sub-sigma fields.

A roadmap of this appendix is as follows. Section \ref{sec:tau} formally introduces the concept of $\tau$-mixing coefficient. Section \ref{subsec:bernover} previews the proof of Theorem \ref{thm:bern} and indicates some major differences from the proofs in \cite{banna2016bernstein}. Section \ref{sec:cantor} contains the construction of Cantor-like set which is essential for decoupling dependent matrices.  Section \ref{sec:decoupling} develops a major decoupling lemma for $\tau$-mixing random matrices and will be used in Section \ref{sec:lem2} to prove Lemma \ref{lem:bernlm2}. Then Section \ref{sec:thm-main} finishes the proof of Theorem \ref{thm:bern}.

\subsection{Introduction to $\tau$-mixing random sequence}\label{sec:tau}
	This section introduces the $\tau$-mixing coefficient. Consider $(\Omega,\cF,\P)$ to be a probability space, $X$ an $L_1$-integrable random variable taking value in a Polish space $(\mathcal{X}, \norm{\cdot}_{\cX})$,  and $\cA$ a sigma algebra of $\cF$. The $\tau$-measure of dependence between $X$ and $\cA$ is defined to be
\begin{align*}
\tau(\mathcal{A}, X;\norm{\cdot}_{\cX}) = \Big\lVert\sup_{g \in \Lambda(\norm{\cdot}_{\cX})}\Big\{\int g(x)\P_{X|\mathcal{A}}({\sf d}x) - \int g(x)\P_X({\sf d}x) \Big\}\Big\lVert_{L(1)},
\end{align*}
where $\P_X$ is the distribution of $X$, $\P_{X|\mathcal{A}}$ is the conditional distribution of $X$ given $\mathcal{A}$, and $\Lambda(\norm{\cdot}_{\cX})$ stands for the set of 1-Lipschitz functions from $\mathcal{X}$ to $\R$ with respect to the norm $\norm{\cdot}_{\cX}$. 

The following two lemmas from \cite{dedecker2004coupling} and \cite{dedecker2007book} characterize the intrinsic ``coupling property" of $\tau$-measure of dependence, which will be heavily exploited in the derivation of our results. 

\begin{lemma}[Lemma 3 in \cite{dedecker2004coupling}]\label{lem:tauup}
	Let $(\Omega, \mathcal{F}, \P)$ be a probability space, $X$ be an integrable random variable with values in a Banach space $(\mathcal{X}, \norm{\cdot}_{\cX})$ and $\mathcal{A}$ a sigma algebra of $\mathcal{F}$. If $Y$ is a random variable distributed as $X$ and independent of $\mathcal{A}$, then
	$$\tau(\mathcal{A}, X;\norm{\cdot}_{\cX}) \leq \E \norm{X-Y}_{\cX}.$$
\end{lemma}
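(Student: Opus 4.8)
The plan is to unwind the definition of the $\tau$-measure and exploit the fact that $Y$ is a coupling of $X$ which is moreover independent of $\mathcal{A}$. First I would rewrite the two integrals inside the definition as conditional and unconditional expectations: for every $g \in \Lambda(\norm{\cdot}_{\cX})$,
$$\int g(x)\,\P_{X|\mathcal{A}}({\sf d}x) = \E[g(X)\mid \mathcal{A}] \quad\text{and}\quad \int g(x)\,\P_X({\sf d}x) = \E[g(X)].$$
Since $Y$ has the same law as $X$ we have $\E[g(X)] = \E[g(Y)]$, and since $Y$ is independent of $\mathcal{A}$ we have $\E[g(Y)] = \E[g(Y)\mid \mathcal{A}]$. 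Substituting these identities, the bracketed quantity inside the supremum collapses to the single conditional expectation $\E[g(X) - g(Y)\mid \mathcal{A}]$.

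Next I would invoke the Lipschitz property to pass to $\norm{X-Y}_{\cX}$. Each $g \in \Lambda(\norm{\cdot}_{\cX})$ is $1$-Lipschitz, so $|g(X) - g(Y)| \leq \norm{X-Y}_{\cX}$ pointwise, and hence
$$\E[g(X) - g(Y)\mid \mathcal{A}] \leq \E\big[\,|g(X) - g(Y)|\,\big|\,\mathcal{A}\,\big] \leq \E\big[\,\norm{X-Y}_{\cX}\,\big|\,\mathcal{A}\,\big].$$
The right-hand side is free of $g$, so taking the supremum over $g \in \Lambda(\norm{\cdot}_{\cX})$ preserves the bound; moreover this supremum is nonnegative because the choice $g \equiv 0$ is admissible and yields $0$. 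Taking the $L(1)$ norm (which for a nonnegative quantity is just its expectation) and applying the tower property then gives
$$\tau(\mathcal{A}, X;\norm{\cdot}_{\cX}) \leq \E\Big[\E\big[\,\norm{X-Y}_{\cX}\,\big|\,\mathcal{A}\,\big]\Big] = \E\norm{X-Y}_{\cX},$$
which is exactly the asserted inequality.

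The single delicate point, and the place I expect the real work to hide, is the measurability of the $\mathcal{A}$-measurable supremum over the uncountable family $\Lambda(\norm{\cdot}_{\cX})$, which is needed for the outer $L(1)$ norm even to make sense. I would resolve this by separability of $\mathcal{X}$: restricting to a countable dense subfamily of $1$-Lipschitz functions (equivalently, working with a regular conditional distribution $\P_{X|\mathcal{A}}$, which exists when $\mathcal{X}$ is Polish) reduces the supremum to a countable one without altering its value, thereby rendering it measurable. Granting this, the chain of inequalities above is elementary and completes the proof.
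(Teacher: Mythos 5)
Your proof is correct. The paper does not prove this lemma at all---it is imported verbatim as Lemma 3 of \cite{dedecker2004coupling}---and your argument (collapsing both integrals into the single conditional expectation $\E[g(X)-g(Y)\given\mathcal{A}]$ via equidistribution and independence of $Y$, applying the $1$-Lipschitz bound $|g(X)-g(Y)|\leq\norm{X-Y}_{\cX}$, and reducing the uncountable supremum to a countable dense subfamily of Lipschitz functions to secure measurability) is precisely the standard proof given in that reference, with the one genuinely delicate point correctly identified and resolved.
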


\begin{lemma}[Lemma 5.3 in \cite{dedecker2007book}]\label{lem:dedecker}
	Let $(\Omega, \mathcal{\mathcal{F}}, \P)$ be a probability space, $\mathcal{A}$ be a sigma algebra of $\mathcal{F}$, and $X$ be a random variable with values in a Polish space $(\mathcal{X}, \norm{\cdot}_{\cX})$. Assume that $\int \norm{x-x_0}_{\cX}\P_X({\sf d}x)$ is finite for any $x_0 \in \mathcal{X}$. Assume that there exists a random variable $U$ uniformly distributed over $[0,1]$, independent of the sigma algebra generated by $X$ and $\mathcal{A}$. Then there exists a random variable $\tilde{X}$, measurable with respect to $\mathcal{A}\vee \sigma(X) \vee \sigma(U)$, independent of $\mathcal{A}$ and distributed as $X$, such that 
	\begin{align*}
	\tau(\mathcal{A}, X;\norm{\cdot}_{\cX}) = \E \norm{X-\tilde{X}}_{\cX}.
	\end{align*}
\end{lemma}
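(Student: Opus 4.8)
The plan is to recognize the defining supremum of the $\tau$-coefficient as a Kantorovich--Rubinstein dual and then to realize the corresponding optimal transport plan as an explicit coupling using the auxiliary randomization $U$. Since $\mathcal{X}$ is Polish, a regular conditional distribution $\P_{X|\mathcal{A}}(\omega,\cdot)$ exists and defines an $\mathcal{A}$-measurable map $\omega\mapsto\P_{X|\mathcal{A}}(\omega,\cdot)$ into the Polish space $\mathcal{P}(\mathcal{X})$. The moment hypothesis gives $\int\norm{x-x_0}_{\cX}\,\P_X(\d x)<\infty$, and since $\E\int\norm{x-x_0}_{\cX}\,\P_{X|\mathcal{A}}(\omega,\d x)=\int\norm{x-x_0}_{\cX}\,\P_X(\d x)$, for $\P$-a.e.\ $\omega$ the measure $\P_{X|\mathcal{A}}(\omega,\cdot)$ also has finite first moment. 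By the Kantorovich--Rubinstein duality, for a.e.\ $\omega$,
$$
\sup_{g\in\Lambda(\norm{\cdot}_{\cX})}\Big\{\int g(x)\,\P_{X|\mathcal{A}}(\omega,\d x)-\int g(x)\,\P_X(\d x)\Big\}
= W_1\big(\P_{X|\mathcal{A}}(\omega,\cdot),\P_X\big)
= \inf_{\gamma\in\Pi(\P_{X|\mathcal{A}}(\omega,\cdot),\,\P_X)}\int\norm{x-y}_{\cX}\,\gamma(\d x,\d y),
$$
where $\Pi(\mu,\nu)$ is the set of couplings with marginals $\mu,\nu$. Integrating over $\omega$ identifies $\tau(\mathcal{A},X;\norm{\cdot}_{\cX})=\E\,W_1(\P_{X|\mathcal{A}},\P_X)$, reducing the claim to producing a single $\tilde{X}$ whose conditional-on-$\mathcal{A}$ joint law with $X$ is, for a.e.\ $\omega$, an \emph{optimal} plan.

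Next I would select these optimal plans measurably. Consider the set-valued map $\mu\mapsto\Pi_{\mathrm{opt}}(\mu,\P_X)$ on $\mathcal{P}(\mathcal{X})$ assigning to $\mu$ the nonempty, weakly compact set of optimal $1$-Wasserstein couplings between $\mu$ and the fixed measure $\P_X$; it is measurable with closed values, so the Kuratowski--Ryll-Nardzewski selection theorem furnishes a measurable selection $\mu\mapsto\gamma_\mu$. Composing with the $\mathcal{A}$-measurable map $\omega\mapsto\P_{X|\mathcal{A}}(\omega,\cdot)$ gives an $\mathcal{A}$-measurable family $\gamma_\omega:=\gamma_{\P_{X|\mathcal{A}}(\omega,\cdot)}$. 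Disintegrating $\gamma_\omega$ against its first marginal $\P_{X|\mathcal{A}}(\omega,\cdot)$ yields a transition kernel, which because of the above dependence may be written as a jointly measurable kernel $K(\mu,x,\cdot)$ on $\mathcal{P}(\mathcal{X})\times\mathcal{X}$ satisfying $\gamma_\omega(\d x,\d y)=\P_{X|\mathcal{A}}(\omega,\d x)\,K(\P_{X|\mathcal{A}}(\omega,\cdot),x,\d y)$.

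With the kernel in hand I would realize $\tilde{X}$ through the independent uniform $U$. Since $\mathcal{P}(\mathcal{X})\times\mathcal{X}$ is Polish, the transfer (functional-representation) theorem provides a measurable map $f\colon\mathcal{P}(\mathcal{X})\times\mathcal{X}\times[0,1]\to\mathcal{X}$ with $\mathcal{L}\big(f(\mu,x,U)\big)=K(\mu,x,\cdot)$ for each $(\mu,x)$. Setting $\tilde{X}:=f\big(\P_{X|\mathcal{A}}(\omega,\cdot),X(\omega),U(\omega)\big)$ makes $\tilde{X}$ by construction $\mathcal{A}\vee\sigma(X)\vee\sigma(U)$-measurable, using that $\omega\mapsto\P_{X|\mathcal{A}}(\omega,\cdot)$ is $\mathcal{A}$-measurable. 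Because $U$ is independent of $\mathcal{A}\vee\sigma(X)$, conditionally on $\mathcal{A}$ the pair $(X,\tilde{X})$ has law $\gamma_\omega$; its second marginal is the fixed measure $\P_X$ for a.e.\ $\omega$, which simultaneously shows $\tilde{X}\sim\P_X$ and that $\tilde{X}$ is independent of $\mathcal{A}$ (a variable whose conditional law given $\mathcal{A}$ is a.s.\ a nonrandom measure is independent of $\mathcal{A}$). Finally, conditioning on $\mathcal{A}$ and invoking optimality of $\gamma_\omega$,
$$
\E\norm{X-\tilde{X}}_{\cX}
=\E\Big[\int\norm{x-y}_{\cX}\,\gamma_\omega(\d x,\d y)\Big]
=\E\,W_1(\P_{X|\mathcal{A}},\P_X)
=\tau(\mathcal{A},X;\norm{\cdot}_{\cX}),
$$
which is the asserted identity.

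The main obstacle is the measurability bookkeeping of the second paragraph: verifying that $\mu\mapsto\Pi_{\mathrm{opt}}(\mu,\P_X)$ is a measurable closed-valued multifunction (resting on weak lower semicontinuity of the transport cost and weak compactness of the plan set under the finite-moment assumption) and that the disintegration $K$ can be chosen jointly measurable. The analytic inputs---Kantorovich--Rubinstein duality and existence of optimal plans on Polish spaces---are standard. The role of the hypothesis on $U$ is precisely to supply the extra randomness needed for the transfer step; without it one could only guarantee the inequality $\tau(\mathcal{A},X;\norm{\cdot}_{\cX})\le\E\norm{X-\tilde{X}}_{\cX}$ of Lemma \ref{lem:tauup}, whereas the present construction forces equality.
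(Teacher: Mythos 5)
The paper never proves this statement: Lemma \ref{lem:dedecker} is imported verbatim as Lemma 5.3 of \cite{dedecker2007book} (see also \cite{dedecker2004coupling}), so there is no internal proof to compare against. Your proposal is, in substance, a correct reconstruction of the argument behind the cited result, and it is the standard route: conditional Kantorovich--Rubinstein duality identifies $\tau(\mathcal{A},X;\norm{\cdot}_{\cX})$ with $\E\,W_1(\P_{X|\mathcal{A}},\P_X)$; the coupling is then realized by measurably selecting an optimal transport plan for each value of the ($\mathcal{A}$-measurable) conditional law, disintegrating that plan against its first marginal, and using the auxiliary uniform $U$ to generate $\tilde X$ from the resulting kernel. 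Your final steps are handled correctly: conditionally on $\mathcal{A}$ the pair $(X,\tilde X)$ has the optimal plan as its law, the second marginal being the nonrandom measure $\P_X$ simultaneously gives $\tilde X\sim\P_X$ and independence from $\mathcal{A}$, and optimality turns Lemma \ref{lem:tauup}'s inequality into the asserted equality. Two bookkeeping points, which you flag but do not discharge, carry the real technical weight: (i) measurability in $\omega$ of the supremum defining $\tau$ and its a.e.\ identification with $W_1$ (handled by passing to a countable family of Lipschitz functions, using separability of $\cX$); and (ii) measurability of the closed-valued multifunction $\mu\mapsto\Pi_{\mathrm{opt}}(\mu,\P_X)$, joint measurability of the disintegration kernel, and the randomization (transfer) lemma producing $f$. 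All of these are standard facts about optimal transport and kernels on Polish spaces, so the outline is sound, but a self-contained proof would need to cite or verify them explicitly rather than assert them.
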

Let $\{X_j\}_{j \in J}$ be a set of $\mathcal{X}$-valued random variables with index set $J$ of finite cardinality. Then define
\begin{align*}
\tau(\mathcal{A}, \{X_j \in \mathcal{X}\}_{j \in J}; \norm{\cdot}_{\cX}) = \Big\lVert\sup_{g \in \Lambda(\norm{\cdot}_{\cX}')}\Big\{\int g(x)\P_{\{X_j\}_{j \in J}|\mathcal{A}}({\sf d}x) - \int g(x)\P_{\{X_j \}_{j \in J}}({\sf d}x) \Big\}\Big\lVert_{L(1)},
\end{align*}
where $\P_{\{X_j \}_{j \in J}}$ is the distribution of $\{X_j \}_{j \in J}$, $\P_{\{X_j\}_{j \in J}|\mathcal{A}}$ is the conditional distribution of $\{X_j \}_{j \in J}$ given $\mathcal{A}$, and $\Lambda(\norm{\cdot}_{\cX}')$ stands for the set of 1-Lipschitz functions from $\underbrace{\mathcal{X}\times \cdots \times \mathcal{X}}_{\card(J)}$ to $\R$ with respect to the norm $\norm{x}_{\cX}' := \sum_{j \in J}\norm{x_j}_{\cX}$ induced by $\norm{\cdot}_{\cX}$ for any $x=(x_1,\ldots,x_J)\in\cX^{{\card}(J)}$. 

Using these concepts, for a sequence of temporally dependent data $\{X_t\}_{t\in \Z}$, we are ready to define measure of temporal correlation strength as follows,
\[
\tau(k; \{X_t\}_{t \in \Z}, \norm{\cdot}_{\cX}) := \sup_{i > 0}\max_{1 \leq \ell \leq i} \frac{1}{\ell}\sup\{\tau\{\sigma(X_{-\infty}^a), \{X_{j_1}, \dots, X_{j_\ell}\}; \norm{\cdot}_{\cX}\}, a+k \leq j_1 < \dots < j_\ell\},
\]
where the inner supremum is taken over all $a \in \Z$ and all $\ell$-tuples $(j_1, \dots, j_\ell)$. $\{X_t\}_{t\in \Z}$ is said to be $\tau$-mixing if $\tau(k; \{X_t\}_{t \in \Z}, \norm{\cdot}_{\cX})$ converges to zero as $k\to\infty$. %See 
In \cite{dedecker2007book} the authors gave numerous examples of random sequences that are $\tau$-mixing.

\subsection{Overview of proof of Theorem \ref{thm:bern}}\label{subsec:bernover}
The proof of Theorem \ref{thm:bern} follows largely the proof of Theorem 1 in \cite{banna2016bernstein}. Section \ref{sec:cantor} reviews the Cantor-set construction developed and used in \cite{merlevede2009bernstein} and \cite{banna2016bernstein}. Lemma \ref{decoup} is a slight extension of Lemma 8 in \cite{banna2016bernstein}. The major difference is that the 0-1 function used to quantify the distance between two random matrices under $\beta$-mixing by Berbee's decoupling lemma \citep{berbee1979random} is replaced by an absolute distance function, which is used under $\tau$-mixing by Lemma \ref{lem:tauup} \citep{dedecker2004coupling}. Proofs of Lemma \ref{lem:bernlm2} and the rest of Theorem \ref{thm:bern} follow largely the proofs of Proposition 7 and Theorem 1 in \cite{banna2016bernstein} respectively, though with more algebras involved.

\subsection{Construction of Cantor-like set}\label{sec:cantor}
We follow \cite{banna2016bernstein} to construct the Cantor-like set $K_B$ for $\lbrace 1, \dots, B\rbrace$. Let $\delta = \frac{\log 2}{2\log B}$ and $\ell_B = \sup\lbrace k\in \Z^+: \frac{B\delta(1-\delta)^{k-1}}{2^k} \geq 2\rbrace$. We abbreviate $\ell := \ell_B$. Let $n_0 = B$ and for $j\in \lbrace 1,\dots, \ell\rbrace$,
\begin{align*}
n_j = \Big\lceil \frac{B(1-\delta)^j}{2^j}\Big\rceil~~{\rm and}~~d_{j-1} = n_{j-1} - 2n_j.
\end{align*}
We start from the set $\lbrace 1,\dots, B\rbrace$ and divide the set into three disjoint subsets $I_1^1, J_0^1, I_1^2$ so that $\card(I_1^1) = \card(I_1^2) = n_1$ and $\card(J_0^1) = d_0$. Specifically, 
$$I_1^1 = \{1, \dots, n_1\},\ J_0^1 = \{n_1+1, \dots, n_1+d_0\},\ I_1^2 = \{n_1 + d_0+1, \dots, 2n_1 + d_0\},$$ where $B = 2n_1 + d_0$. Then we divide $I_1^1, I_1^2$ with $J_0^1$ unchanged. $I_1^1$ is divided into three disjoint subsets $I_2^1, J_1^1, I_2^2$ in the same way as the previous step with $\card(I_2^1) = \card(I_2^2) = n_2$ and $\card(J_1^1) = d_1$. We obtain 
$$I_2^1 = 
\{1,\dots, n_2\},\ J_1^1 = \{n_2+1, \dots, n_2 + d_1\},\ I_2^2 = \{n_2 + d_1 + 1, \dots, 2n_2+d_1\},$$ where $n_1 = 2n_2+d_1$. Similarly, $I_1^2$ is divided into $I_2^3, J_1^2, I_2^4$ with $\card(I_2^3) = \card(I_2^4) = n_2$ and $\card(J_1^2) = d_1$. We obtain 
\begin{align*}
I_2^3 &= 
\{2n_2 + d_0 + d_1+1,\dots, 3n_2 + d_0 + d_1\},\ J_1^2 = \{3n_2 + d_0 + d_1+1, \dots, 3n_2 + d_0 + 2d_1\},\\
I_2^4 &= \{3n_2 + d_0 + 2d_1 + 1, \dots, 4n_2 + d_0 + 2d_1\},
\end{align*}
 where $B = 4n_2 + d_0 + 2d_1$. 
 
 Suppose we iterate this process for $k$ times ($k \in \lbrace 1,\dots, \ell\rbrace$) with intervals $I_k^i, i \in \lbrace 1,\dots, 2^k\rbrace$. For each $I_k^i$, we divide it into three disjoint subsets $I_{k+1}^{2i-1}, J_{k}^i, I_{k+1}^{2i}$ so that $\card(I_{k+1}^{2i-1}) = \card(I_{k+1}^{2i}) = n_{k+1}$ and $\card(J_{k}^i) = d_k$. More specifically, if $I_k^i = \{a_k^i, \dots, b_k^i\}$, then
 \begin{align*}
& I_{k+1}^{2i-1} = \{a_k^i, \dots, a_k^i + n_{k+1}-1\},\ J_k^i = \{a_k^i + n_{k+1}, \dots, a_k^i + n_{k+1} + d_k -1\},\\
 & I_{k+1}^{2i} = \{a_k^i + n_{k+1} + d_k, \dots, a_k^i + 2n_{k+1} + d_k-1\}.
 \end{align*}
 After $\ell$ steps, we obtain $2^\ell$ disjoint subsets $I_\ell^{i}, i\in \lbrace 1,\dots, 2^\ell\rbrace$ with $\card(I_\ell^{i}) = n_\ell$. Then the Cantor-like set is defined as
\begin{align*}
K_B = \bigcup\limits_{i = 1}^{2^\ell}I_{\ell}^i,
\end{align*}
and for each level $k \in \lbrace 0,\dots, \ell\rbrace$ and each $j \in \lbrace 1, \dots, 2^k\rbrace$, define
\begin{align*}
K_k^j = \bigcup\limits_{i = (j-1)2^{\ell-k}+1}^{j2^{\ell-k}}I_{\ell}^i.
\end{align*}
Some properties derived from this construction are given by \cite{banna2016bernstein}:
\begin{enumerate}
	\item $\delta \leq \frac{1}{2}$ and $\ell \leq \frac{\log B}{\log 2}$;
	\item $d_j \geq \frac{B\delta(1-\delta)^j}{2^{j+1}}$ and $n_\ell \leq \frac{B(1-\delta)^\ell}{2^{\ell-1}}$;
	\item Each $I_\ell^i, i\in \lbrace 1,\dots, 2^\ell\rbrace$ contains $n_\ell$ consecutive integers, and for any $i\in \lbrace 1,\dots, 2^{\ell-1}\rbrace$, $I_\ell^{2i-1}$ and $I_\ell^{2i}$ are spaced by $d_{\ell-1}$ integers;
	\item $\card(K_B) \geq \frac{B}{2}$; 
	\item For each $k \in \lbrace 0,\dots, \ell\rbrace$ and each $j \in \lbrace 1, \dots, 2^k\rbrace$, $\card(K_k^j) = 2^{\ell-k}n_\ell$. For each $j \in \lbrace 1, \dots, 2^{k-1}\rbrace$, $K_k^{2j-1}$ and $K_k^{2j}$ are spaced by $d_{k-1}$ integers;
	\item $K_0^1 = K_B$ and $K_\ell^j = I_\ell^j$ for $j \in \lbrace 1, \dots, 2^\ell\rbrace$.
\end{enumerate}

\subsection{A decoupling lemma for $\tau$-mixing random matrices}\label{sec:decoupling}
This section introduces the key tool to decouple $\tau$-mixing random matrices using Cantor-like set constructed in Section \ref{sec:cantor}. With some abuse of notation, within this section let's use $\lbrace \Xb_j \rbrace_{j\in\{1,\ldots,n\}}$ to denote a  generic sequence of $p\times p$ symmetric random matrices. Assume $\E(\Xb_j) = \mathbf{0}$ and $\lVert \Xb_j \rVert \leq M$ for some positive constant $M$ and for all $j\geq 1$. For a collection of index sets $H^k_1,\ k\in \lbrace 1,\dots, d\rbrace$, we assume that their cardinalities are equal and even. Denote $\lbrace\Xb_j \rbrace_{j \in H^k_1}$ to be the set of matrices whose indices are in $H^k_1$. Assume $\lbrace\Xb_j \rbrace_{j \in H_1^1}, \dots, \lbrace\Xb_j \rbrace_{j \in H^d_1}$ are mutually independent, while within each block $H_1^k$ the matrices are possibly dependent. For each $k$, decompose $H^k_1$ into two disjoint sets $H^{2k-1}_2$ and $H^{2k}_2$ with equal size, containing the first and second half of $H^k_1$ respectively. In addition, we denote $\tau_0:= \tau\{\sigma(\lbrace\Xb_j \rbrace_{j \in H^{2k-1}_2}),\ \lbrace\Xb_j \rbrace_{j \in H^{2k}_2};\norm{\cdot}\}$ for some constant $\tau_0 \geq 0$ and for all $k \in \lbrace 1,\dots, d\rbrace$. For a given $\epsilon > 0$, we achieve the following decoupling lemma.
\begin{lemma}\label{decoup}
	We obtain for any $\epsilon >0$,
	\begin{align*}
	&\E\tr\exp\Big(t\sum_{k = 1}^{d}\sum_{j \in H^k_1}\Xb_j\Big)
	\leq \sum\limits_{i = 0}^{d}\binom{d}{i}(1 + L_1 + L_2)^{d - i}(L_1)^{i}\E\tr\exp\Big\{(-1)^i t\Big(\sum\limits_{k = 1}^{2d}\sum_{j \in H^{k}_2} \tilde{\Xb}_j \Big)\Big\},\\
	&\E\tr\exp\Big(-t\sum_{k = 1}^{d}\sum_{j \in H^k_1}\Xb_j\Big)
	\leq \sum\limits_{i = 0}^{d}\binom{d}{i}(1 + L_1 + L_2)^{d - i}(L_1)^{i}\E\tr\exp\Big\{(-1)^{i+1} t\Big(\sum\limits_{k = 1}^{2d}\sum_{j \in H^k_2} \tilde{\Xb}_j \Big)\Big\},
	\end{align*}
	where 
	\[	
	L_1 := pt\epsilon\exp(t\epsilon), ~~L_2 := \exp\{\card(H_1^1)tM\}\tau_0/\epsilon, 
	\]	
	and $\lbrace\tilde{\Xb}_j \rbrace_{j \in H^k_2},\ k \in \lbrace 1, \dots, 2d\rbrace$, are mutually independent and have the same distributions as $\lbrace \Xb_j \rbrace_{j \in H^k_2}$, $k \in \lbrace 1, \dots, 2d\rbrace$.
\end{lemma}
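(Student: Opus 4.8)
The plan is to reduce the whole statement to a single ``one block at a time'' decoupling step and then iterate it through a signed two-state recursion. For $k\in\{1,\dots,d\}$ write $\Sb^{(1)}_k:=\sum_{j\in H_2^{2k-1}}\Xb_j$ and $\Sb^{(2)}_k:=\sum_{j\in H_2^{2k}}\Xb_j$, so that $\sum_{j\in H_1^k}\Xb_j=\Sb^{(1)}_k+\Sb^{(2)}_k$ and $\|\Sb^{(1)}_k\|,\|\Sb^{(2)}_k\|\le\tfrac12\card(H_1^1)M$. On a sufficiently rich product space, for each $k$ I would use Lemma \ref{lem:dedecker} to construct $\tilde\Sb^{(2)}_k$ distributed as $\Sb^{(2)}_k$, independent of $\sigma(\{\Xb_j\}_{j\in H_2^{2k-1}})$ and of all other blocks, with $\E\|\Sb^{(2)}_k-\tilde\Sb^{(2)}_k\|\le\tau_0$. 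For $\sigma\in\{+1,-1\}$ and $0\le r\le d$ define
\[
\Phi_r(\sigma):=\E\tr\exp\Big(\sigma t\Big[\sum_{k=1}^{r}(\Sb^{(1)}_k+\Sb^{(2)}_k)+\sum_{k=r+1}^{d}(\Sb^{(1)}_k+\tilde\Sb^{(2)}_k)\Big]\Big),
\]
so that $\Phi_d(+1)$ is the left-hand side of the lemma and, after renaming the (mutually independent) first halves as further $\tilde\Xb$-copies, $\Phi_0((-1)^i)=\E\tr\exp\{(-1)^i t\sum_{k=1}^{2d}\sum_{j\in H_2^k}\tilde\Xb_j\}$ is exactly the matrix exponential on the right-hand side.

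The heart of the argument is the \emph{one-step inequality} $\Phi_r(\sigma)\le(1+L_1+L_2)\,\Phi_{r-1}(\sigma)+L_1\,\Phi_{r-1}(-\sigma)$, which swaps the still-coupled block $r$ for its decoupled copy. Writing $\mathbf H$ for $\sigma t$ times everything except $\Sb^{(2)}_r$ (so $\mathbf H$ is independent of $\tilde\Sb^{(2)}_r$) and $\mathbf D:=\Sb^{(2)}_r-\tilde\Sb^{(2)}_r$, Golden--Thompson gives
\[
\tr\exp(\mathbf H+\sigma t\Sb^{(2)}_r)\le\tr e^{\mathbf H+\sigma t\tilde\Sb^{(2)}_r}+\tr\big(e^{\mathbf H+\sigma t\tilde\Sb^{(2)}_r}(e^{\sigma t\mathbf D}-\Ib)\big),
\]
whose first term, after taking expectations, is the coefficient-one term $\Phi_{r-1}(\sigma)$. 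On $\{\|\mathbf D\|\le\epsilon\}$ one has $\|e^{\sigma t\mathbf D}-\Ib\|\le e^{t\epsilon}-1\le t\epsilon e^{t\epsilon}$, so this part of the correction is at most $t\epsilon e^{t\epsilon}\tr e^{\mathbf H+\sigma t\tilde\Sb^{(2)}_r}\le L_1\tr e^{\mathbf H+\sigma t\tilde\Sb^{(2)}_r}$ (using $p\ge1$), contributing $L_1\Phi_{r-1}(\sigma)$. On the complementary event I would use $\mathbf 1_{\{\|\mathbf D\|>\epsilon\}}\le\|\mathbf D\|/\epsilon$ together with the coupling estimate $\E\|\mathbf D\|\le\tau_0$, the deterministic bound $\|\mathbf D\|\le\card(H_1^1)M$ giving $\|e^{\sigma t\mathbf D}-\Ib\|\le e^{t\card(H_1^1)M}$, and the rewriting $\mathbf H+\sigma t\Sb^{(2)}_r=(\mathbf H-\sigma t\tilde\Sb^{(2)}_r)+\sigma t(\Sb^{(2)}_r+\tilde\Sb^{(2)}_r)$ with $\sigma t(\Sb^{(2)}_r+\tilde\Sb^{(2)}_r)\preceq t\card(H_1^1)M\,\Ib$; these produce, respectively, the same-sign term $L_2\Phi_{r-1}(\sigma)$ and the opposite-sign term $L_1\Phi_{r-1}(-\sigma)$, the dimensional factor $p$ entering through a $\tr(\cdot)\le p\|\cdot\|$ step used to detach the rare-event weight from the trace exponential.

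Granting the one-step inequality, the bound follows by iteration. In vector form $(\Phi_r(+1),\Phi_r(-1))^\T\le\mathbf B\,(\Phi_{r-1}(+1),\Phi_{r-1}(-1))^\T$ entrywise, with $\mathbf B=\big(\begin{smallmatrix}1+L_1+L_2 & L_1\\ L_1 & 1+L_1+L_2\end{smallmatrix}\big)=(1+L_1+L_2)\Ib+L_1\,\mathrm{(swap)}$ and $\mathrm{(swap)}^2=\Ib$. Hence the first row of $\mathbf B^d$ is $\sum_{i=0}^d\binom di(1+L_1+L_2)^{d-i}L_1^i\,\mathrm{(swap)}^i$, which attaches the scalar $\binom di(1+L_1+L_2)^{d-i}L_1^i$ to $\Phi_0(+1)$ when $i$ is even and to $\Phi_0(-1)$ when $i$ is odd, i.e.\ to $\Phi_0((-1)^i)$ in every case. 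This is precisely the first displayed inequality; the second is obtained identically by starting the recursion from $\Phi_d(-1)$.

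The step I expect to be the main obstacle is the rare-event part of the one-step inequality. Unlike the $\beta$-mixing setting of \cite{banna2016bernstein}, where Berbee's coupling forces $\Sb^{(2)}_r=\tilde\Sb^{(2)}_r$ off an event of probability $\le\beta$ and only a $0$--$1$ indicator must be controlled, the $\tau$-coupling controls merely $\E\|\mathbf D\|$, and the weight $\|\mathbf D\|$ (equivalently $\mathbf 1_{\{\|\mathbf D\|>\epsilon\}}$) is genuinely correlated with $\tilde\Sb^{(2)}_r$ through the shared first-half randomness. Detaching this weight from $e^{\mathbf H\pm\sigma t\tilde\Sb^{(2)}_r}$ so as to land back on the clean decoupled quantities $\Phi_{r-1}(\pm\sigma)$ — which is exactly what forces the truncation level $\epsilon$, the split into the smooth ($L_1$) and large-deviation ($L_2$) contributions, and the dimensional loss $p$ — is the only place where care beyond the bookkeeping of \cite{banna2016bernstein} is needed; everything else is a line-by-line adaptation of their Lemma 8 with the absolute distance $\|\mathbf D\|$ in place of the indicator.
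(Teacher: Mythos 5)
Your overall architecture is sound: the signed two\mbox{-}state recursion $\Phi_r(\sigma)\le(1+L_1+L_2)\Phi_{r-1}(\sigma)+L_1\Phi_{r-1}(-\sigma)$ iterated into the binomial form is exactly the paper's induction, and your good-event step is correct (indeed cleaner than the paper's: using $\tr(AB)\le\norm{B}\tr(A)$ for $A\succeq 0$ you get $L_1\Phi_{r-1}(\sigma)$ with no factor $p$ and no opposite-sign term, whereas the paper's route through $\tr(\cdot)\le p\norm{\cdot}$ and $e^{\norm{A}}\le\tr e^{A}+\tr e^{-A}$ is precisely what generates its opposite-sign $L_1$ contribution). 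The genuine gap is the rare-event step — the one you yourself flag as the main obstacle — because none of the tools you list detaches the weight from the trace exponential. You must control $\E\big[\mathds{1}_{\{\norm{\mathbf{D}}>\epsilon\}}\tr\big(e^{\mathbf{H}+\sigma t\tilde{\Sb}_r^{(2)}}(e^{\sigma t\mathbf{D}}-\Ib)\big)\big]$, or after your rewriting $\E\big[\mathds{1}_{\{\norm{\mathbf{D}}>\epsilon\}}\tr e^{\mathbf{H}-\sigma t\tilde{\Sb}_r^{(2)}}\big]$; in either form the exponent still contains $\Sb_r^{(1)}$ (inside $\mathbf{H}$) and $\tilde{\Sb}_r^{(2)}$, both of which are dependent on $\mathbf{D}$, so the expectation of the product cannot be split: Markov ($\mathds{1}\le\norm{\mathbf{D}}/\epsilon$) together with $\E\norm{\mathbf{D}}\le\tau_0$ cannot be applied factor by factor, and $\tr(\cdot)\le p\norm{\cdot}$ only replaces one random factor correlated with the weight by another. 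Moreover, $\mathbf{H}-\sigma t\tilde{\Sb}_r^{(2)}$ is a mixed-sign exponent (only $\tilde{\Sb}_r^{(2)}$ has its sign flipped), so even if detached it would yield neither $\Phi_{r-1}(\sigma)$ nor $\Phi_{r-1}(-\sigma)$, and its natural coefficient would involve $\tau_0/\epsilon$, not $L_1$.

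What the paper (following Lemma 8 of \cite{banna2016bernstein}) does on the bad event, and what is missing from your sketch, is a three-step argument exploiting the block structure. (i) Apply Golden--Thompson to strip the \emph{entire} coupled block $r$, i.e.\ $\sigma t(\Sb_r^{(1)}+\Sb_r^{(2)})$, out of the exponent at cost $\exp\{t\card(H_1^1)M\}$, leaving $\tr e^{\mathbf{G}}$ where $\mathbf{G}$ is built from the other blocks only. (ii) Observe that $\{\norm{\mathbf{D}}>\epsilon\}$ is measurable with respect to block-$r$ randomness together with the auxiliary uniform variable from Lemma \ref{lem:dedecker}, hence \emph{independent} of $\mathbf{G}$ by the assumed mutual independence of the blocks; therefore $\E[\mathds{1}_{\{\norm{\mathbf{D}}>\epsilon\}}\tr e^{\mathbf{G}}]=\P(\norm{\mathbf{D}}>\epsilon)\,\E\tr e^{\mathbf{G}}\le(\tau_0/\epsilon)\,\E\tr e^{\mathbf{G}}$. (iii) Re-insert the decoupled block by Jensen's inequality, using convexity of $\tr\exp$ and the facts that $\Sb_r^{(1)}+\tilde{\Sb}_r^{(2)}$ is mean zero and independent of $\mathbf{G}$, so that $\E\tr e^{\mathbf{G}}\le\E\tr e^{\mathbf{G}+\sigma t(\Sb_r^{(1)}+\tilde{\Sb}_r^{(2)})}=\Phi_{r-1}(\sigma)$. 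This yields the same-sign bound $L_2\Phi_{r-1}(\sigma)$ for the bad event (no opposite-sign term arises there), and combined with your good-event bound it even gives the all-same-sign recursion $\Phi_r(\sigma)\le(1+L_1+L_2)\Phi_{r-1}(\sigma)$, which still implies the stated lemma since every summand on its right-hand side is nonnegative. A telling symptom of the gap: your sketch never uses the mean-zero assumption on the $\Xb_j$ nor the mutual independence of the blocks, yet both are indispensable — they enter exactly in steps (ii) and (iii).
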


\begin{proof}
	We prove this lemma by induction. For any $k\in \lbrace 1,\dots, d\rbrace$, we have $H^k_1 = H^{2k-1}_2 \cup H^{2k}_2$ and hence $\sum_{j \in H^k_1}\Xb_j = \sum_{j \in H^{2k-1}_2}\Xb_j + \sum_{j \in H^{2k}_2}\Xb_j$. 
	
	By Lemma \ref{lem:dedecker}, for each $k \in \lbrace 1,\dots, d\rbrace$, we could find a sequence of random matrices $\lbrace \tilde{\Xb}_j \rbrace_{j \in H^{2k}_2}$ and an independent uniformly distributed random variable $U_k$ on $[0,1]$ such that 
	\begin{enumerate}
		\item $\lbrace \tilde{\Xb}_j \rbrace_{j \in H^{2k}_2}$ is measurable with respect to the sigma field $\sigma(\lbrace \Xb_j \rbrace_{j \in H^{2k-1}_2})\vee \sigma(\lbrace \Xb_j \rbrace_{j \in H^{2k}_2}) \vee \sigma(U_k)$;
		\item $\lbrace \tilde{\Xb}_j \rbrace_{j \in H^{2k}_2}$ is independent of $\sigma(\lbrace \Xb_j \rbrace_{j \in H^{2k-1}_2})$;
		\item $\lbrace \tilde{\Xb}_j \rbrace_{j \in H^{2k}_2}$ has the same distribution as $\lbrace \Xb_j \rbrace_{j \in H^{2k}_2}$;
		\item $\P(\lVert \sum_{j \in H^{2k}_2} \Xb_j -  \sum_{j \in H^{2k}_2} \tilde{\Xb}_j \lVert > \epsilon_k) \leq \E(\lVert \sum_{j \in H^{2k}_2} \Xb_j -  \sum_{j \in H^{2k}_2} \tilde{\Xb}_j \lVert)/\epsilon_k \leq \tau_0/\epsilon_k$ by Markov's inequality and the fact that $\tau_0 = \sum_{j \in H^{2k}_2}\E(\lVert  \Xb_j -  \tilde{\Xb}_j \lVert)$.
		%where $N__{\epsilon_k}$ is the covering number of the product metric space $(\prod_{j \in H_{2k}^2} \cD_j, \sum_{j \in H_{2k}^2})$ by $\epsilon_k$-balls, i.e., $N_k = \mathcal{N}(\prod_{j \in H_{2k}^2} \cD_j,\ \sum_{j \in H_{2k}^2},\ \epsilon_k)$. By assumption, all $N_k$ are equal and for simplicity we denote the common value to be $\bar{N}$.%
	\end{enumerate}
	To make notation easier to follow, we set equal value to $\epsilon_k$ for $k \in \lbrace 1, \dots, d\rbrace$ and denote it as $\epsilon$. Moreover, we denote the event $\Gamma_{k} = \lbrace \lVert \sum_{j \in H^{2k}_2} \tilde{\Xb}_j -  \sum_{j \in H^{2k}_2} \Xb_j \lVert \leq \epsilon \rbrace$ for $k \in \lbrace 1, \dots, d\rbrace$.
	
	For the base case $k = 1$.
	\begin{align*}
	&\E\tr\exp\Big(t\sum_{k = 1}^{d}\sum_{j \in H^k_1}\Xb_j\Big)\! = \underbrace{\E\Big\{\mathds{1}_{\Gamma_1}\tr\exp\Big(t\sum_{k = 1}^{d}\sum_{j \in H^k_1}\Xb_j\Big)\Big\}\!}_{I}  + \underbrace{\! \E\Big\{\mathds{1}_{(\Gamma_1)^c}\tr\exp\Big(t\sum_{k = 1}^{d}\sum_{j \in H^k_1}\Xb_j\Big)\Big\}\!}_{II}.
	\end{align*}
Notice the definitions of terms $I$ and $II$ therein.
	
	We have
	\begin{align*}
	I &= \E\Big[\mathds{1}_{\Gamma_1}\tr\exp\Big\{t\Big(\sum_{j \in H^1_2}\Xb_j +\sum_{j \in H^2_2}\Xb_j + \sum_{k = 2}^{d}\sum_{j \in H^k_1}\Xb_j\Big)\Big\}\Big]\\
	&\leq \E\tr\exp\Big\{t\Big(\sum_{j \in H^1_2}\Xb_j +\sum_{j \in H^2_2}\tilde{\Xb}_j + \sum_{k = 2}^{d}\sum_{j \in H^k_1}\Xb_j\Big)\Big\} \\
	&+ \E\Big(\mathds{1}_{\Gamma_1}\Big[\tr\exp\Big\{t\Big(\sum_{j \in H^1_2}\Xb_j +\sum_{j \in H^2_2}\Xb_j + \sum_{k = 2}^{d}\sum_{j \in H^k_1}\Xb_j\Big)\Big\}\! -\tr\exp\Big\{t\Big(\sum_{j \in H^1_2}\Xb_j +\!\sum_{j \in H^2_2}\tilde{\Xb}_j + \sum_{k = 2}^{d}\sum_{j \in H^k_1}\Xb_j\Big)\Big\}\Big]\Big).
	\end{align*}
	By linearity of expectation and the facts that $\tr(\Xb) \leq p\lVert \Xb\rVert$ and $\lVert\exp(\Xb) - \exp(\Yb)\rVert \leq \lVert \Xb - \Yb\rVert\exp(\lVert \Xb - \Yb\rVert)\exp(\lVert \Yb\rVert)$, we obtain
	\begin{align*}
	&\E\Big(\mathds{1}_{\Gamma_1}\Big[\tr\exp\Big\{t\Big(\sum_{j \in H^1_2}\Xb_j +\sum_{j \in H^2_2}\Xb_j + \sum_{k = 2}^{d}\sum_{j \in H^k_1}\Xb_j\Big)\Big\}\! -\tr\exp\Big\{t\Big(\sum_{j \in H^1_2}\Xb_j +\!\sum_{j \in H^2_2}\tilde{\Xb}_j + \sum_{k = 2}^{d}\sum_{j \in H^k_1}\Xb_j\Big)\Big\}\Big]\Big)\\
	\leq& \E\Big[ \mathds{1}_{\Gamma_1}p\Big\lVert \exp\Big\{t\Big(\sum_{j \in H^1_2}\Xb_j +\sum_{j \in H^2_2}\Xb_j + \sum_{k = 2}^{d}\sum_{j \in H^k_1}\Xb_j\Big)\Big\} - \exp\Big\{t
	\Big(\sum_{j \in H^1_2}\Xb_j +\sum_{j \in H^2_2}\tilde{\Xb}_j + \sum_{k = 2}^{d}\sum_{j \in H^k_1}\Xb_j\Big)\Big\}\Big\lVert\Big]\\
	\leq& \E\Big[\mathds{1}_{\Gamma_1}p\Big\lVert t\sum_{j \in H^2_2}(\Xb_j - \tilde{\Xb}_j) \Big\rVert \exp\Big\{\Big\lVert t\sum_{j \in H^2_2}(\Xb_j - \tilde{\Xb}_j) \Big\rVert\Big\}\exp\Big\{\Big\lVert t\Big(\sum_{j \in H^1_2}\Xb_j +\sum_{j \in H^2_2}\tilde{\Xb}_j + \sum_{k = 2}^{d}\sum_{j \in H^k_1}\Xb_j\Big) \Big\rVert\Big\}\Big].
	\end{align*}
	
	By spectral mapping theorem, for a symmetric matrix $\Xb$ with $\norm{\Xb} \leq M$, we have $\exp(\lVert \Xb \rVert) \leq \lVert \exp(\Xb)\rVert \vee \lVert \exp(-\Xb)\rVert \leq \lVert \exp(\Xb)\rVert + \lVert \exp(-\Xb)\rVert $. Moreover, since $\exp(\Xb)$ is always positive definite for any matrix $\Xb$ and $\lVert \Xb \rVert \leq \tr(\Xb)$ for any positive definite symmetric matrix $\Xb$, we obtain $\lVert \exp(\Xb)\rVert \leq \tr\exp(\Xb)$ and $\lVert \exp(-\Xb)\rVert \leq \tr\exp(-\Xb)$. In addition, since we have $\norm{\sum_{j \in H_2^2}(\Xb_j - \tilde{\Xb}_j)} \leq \epsilon$ on $\Gamma_1$, we could further bound the inequality above by
	\begin{align*}
	&\E\Big[\mathds{1}_{\Gamma_1}pt\epsilon\exp(t\epsilon)\Big\lVert \exp\Big\{t\Big(\sum_{j \in H^1_2}\Xb_j +\sum_{j \in H^2_2}\tilde{\Xb}_j + \sum_{k = 2}^{d}\sum_{j \in H^k_1}\Xb_j\Big)\Big\}\Big\rVert\Big]\\
	\leq &pt\epsilon\exp(t\epsilon)
	\Big[\E\tr\exp \Big\{t\Big(\sum_{j \in H^1_2}\Xb_j +\sum_{j \in H^2_2}\tilde{\Xb}_j + \sum_{k = 2}^{d}\sum_{j \in H^k_1}\Xb_j\Big)\Big\} \\
	&+ \E\tr\exp \Big\{-t\Big(\sum_{j \in H^1_2}\Xb_j +\sum_{j \in H^2_2}\tilde{\Xb}_j + \sum_{k = 2}^{d}\sum_{j \in H^k_1}\Xb_j\Big)\Big\}\Big].
	\end{align*}
	Putting together, we reach
	\begin{align}\label{3.1.1}
	I \leq &\{1+pt\epsilon\exp(t\epsilon)\}\E\tr\exp \Big\{t\Big(\sum_{j \in H^1_2}\Xb_j +\sum_{j \in H^2_2}\tilde{\Xb}_j + \sum_{k = 2}^{d}\sum_{j \in H^k_1}\Xb_j\Big)\Big\}\nonumber\\
	&+ pt\epsilon\exp(t\epsilon)\E\tr\exp \Big\{-t\Big(\sum_{j \in H^1_2}\Xb_j +\sum_{j \in H^2_2}\tilde{\Xb}_j + \sum_{k = 2}^{d}\sum_{j \in H^k_1}\Xb_j\Big)\Big\}.
	\end{align}
	
	We then aim at $II$. For this, the proof largely follows the same argument as in \cite{banna2016bernstein}. Omitting the details, we obtain
	\begin{align}\label{3.1.2}
	II\leq& \exp\{\card(H^1_1)tM\}(\tau_0/\epsilon)\E\tr\exp\Big\{t\Big(\sum_{j \in H_1^2} \Xb_j + \sum_{j \in H_2^2} \tilde{\Xb}_j + \sum_{k = 2}^{d}\sum_{j \in H_k^1}\Xb_j\Big)\Big\}.
	\end{align}
	
	Denote $L_1 := pt\epsilon\exp(t\epsilon)$ and $L_2 := \exp\{\card(H_1^1)tM\}\tau_0/\epsilon$. Combining \eqref{3.1.1} and \eqref{3.1.2} yields
	\begin{align*}
	&\E\tr\exp\Big(t\sum_{k = 1}^{d}\sum_{j \in H^k_1}\Xb_j\Big)\\
	\leq &(1 + L_1 + L_2)\E\tr\exp\Big\{t\Big(\sum_{j \in H^1_2} \Xb_j + \sum_{j \in H^2_2} \tilde{\Xb}_j + \sum_{k = 2}^{d}\sum_{j \in H^k_1}\Xb_j\Big)\Big\}\\
	&+ L_1 \E\tr\exp\Big\{-t\Big(\sum_{j \in H^1_2} \Xb_j + \sum_{j \in H^2_2} \tilde{\Xb}_j + \sum_{k = 2}^{d}\sum_{j \in H^k_1}\Xb_j\Big)\Big\}\\
	=& \sum\limits_{i = 0}^{1}\binom{1}{i}(1 + L_1 + L_2)^{1 - i}(L_1)^{i}\E\tr\exp\Big\{(-1)^i t\Big(\sum_{j \in H^1_2} \Xb_j + \sum_{j \in H^2_2} \tilde{\Xb}_j + \sum_{k = 2}^{d}\sum_{j \in H^k_1}\Xb_j\Big)\Big\}.
	\end{align*}
	This finishes the base case. 
	
	The induction steps are followed similarly and we omit the details.
	By iterating $d$ times, we arrive at the following inequality:
	\begin{align}\label{3.1.3}
	&\E\tr\exp\Big(t\sum_{k = 1}^{d}\sum_{j \in H^k_1}\Xb_j\Big)\nonumber \\
	\leq& \sum\limits_{i = 0}^{d}\binom{d}{i}(1 + L_1 + L_2)^{d - i}(L_1)^{i}\E\tr\exp\Big\{(-1)^i t\Big(\sum\limits_{k = 1}^{d}\sum_{j \in H^{2k-1}_2} \Xb_j + \sum\limits_{k = 1}^{d}\sum_{j \in H^{2k}_2} \tilde{\Xb}_j\Big)\Big\},
	\end{align}
	where $\lbrace\Xb_j \rbrace_{j \in H^{2k-1}_2},\ k \in \lbrace 1, \dots, d\rbrace$ and $\lbrace\tilde{\Xb}_j \rbrace_{j \in H^{2k}_2},\ k \in \lbrace 1, \dots, d\rbrace$ are mutually independent. In addition, they have the same distributions as $\lbrace\Xb_j \rbrace_{j \in H^{2k-1}_2},\ k \in \lbrace 1, \dots, d\rbrace$ and $\lbrace \Xb_j \rbrace_{j \in H^{2k}_2},\ k \in \lbrace 1, \dots, d\rbrace$, respectively. For the sake of simplicity and clarity, we add an upper tilde to the matrices with indices in $H^{2k-1}_2,\ k \in \lbrace 1, \dots, d\rbrace$, i.e., $\lbrace\tilde{\Xb}_j \rbrace_{j \in H^{2k-1}_2}$ is identically distributed as $\lbrace\Xb_j \rbrace_{j \in H^{2k-1}_2}$ for $k \in \lbrace 1, \dots, d\rbrace$. Hence \eqref{3.1.3} could be rewritten as
	\begin{align*}
	&\E\tr\exp\Big(t\sum_{k = 1}^{d}\sum_{j \in H^k_1}\Xb_j\Big)
	\leq \sum\limits_{i = 0}^{d}\binom{d}{i}(1 + L_1 + L_2)^{d - i}(L_1)^{i}\E\tr\exp\Big\{(-1)^i t\Big(\sum\limits_{k = 1}^{2d}\sum_{j \in H^{k}_2} \tilde{\Xb}_j \Big)\Big\},
	\end{align*}
	where $\lbrace\tilde{\Xb}_j \rbrace_{j \in H^k_2},\ k \in \lbrace 1, \dots, 2d\rbrace$ are mutually independent and their distributions are the same as $\lbrace\Xb_j \rbrace_{j \in H^k_2},\ k \in \lbrace 1, \dots, 2d\rbrace$. 
	
	By changing $\Xb$ to $-\Xb$, we immediately get the following bound:
	\begin{align*}
	&\E\tr\exp\Big(-t\sum_{k = 1}^{d}\sum_{j \in H^k_1}\Xb_j\Big)
	\leq \sum\limits_{i = 0}^{d}\binom{d}{i}(1 + L_1 + L_2)^{d - i}(L_1)^{i}\E\tr\exp\Big\{(-1)^{i+1} t\Big(\sum\limits_{k = 1}^{2d}\sum_{j \in H^{k}_2} \tilde{\Xb}_j \Big)\Big\}.
	\end{align*}
	This completes the proof of Lemma \ref{decoup}.
\end{proof}

\subsection{Proof of Theorem \ref{thm:bern} }\label{sec:thm-main}
\begin{proof}
	Without loss of generality, let $\psi_1 = \tilde{\psi}_1$. 
	
	{\bf Case I.} First of all, we consider $M = 1$.
	
	{\bf Step I (Summation decomposition).} Let $B_0 = n$ and $\Ub^{(0)}_{j} = \Xb_{j}$ for $j \in \lbrace 1,\dots, n\rbrace$. Let $K_{B_0}$ be the Cantor-like set from $\lbrace 1,\dots, B_0\rbrace$ by construction of Section \ref{sec:cantor}, $K_{B_0}^c = \lbrace 1,\dots, B_0\rbrace \setminus K_{B_0}$, and $B_1 = \card(K_{B_0}^c)$. Then define
	\begin{align*}
	&\Ub^{(1)}_{j} = \Xb_{i_j}, \text{ where } i_j \in K_{B_0}^c = \lbrace i_1, \dots, i_{B_1}\rbrace.
	\end{align*}
	For each $i\geq 1$, let $K_{B_i}$ be constructed from $\lbrace 1,\dots, B_{i} \rbrace$ by the same Cantor-like set construction. Denote $K_{B_{i}}^c = \lbrace 1,\dots, B_{i} \rbrace \setminus K_{B_{i}}$ and $B_{i+1} = \card(K_{B_{i}}^c)$. Then
	\begin{align*}
	\Ub^{(i+1)}_{j} = \Ub^{(i)}_{k_j}, \text{ where } k_j \in K_{B_{i}}^c = \lbrace k_1, \dots, k_{B_{i+1}}\rbrace.
	\end{align*}
	We stop the process when there is a smallest $L$ such that $B_L \leq 2$. Then we have for $i\leq L-1$, $B_i \leq n2^{-i}$ because each Cantor-like set $K_{B_{i+1}}$ has cardinality greater than $B_{i}/2$. Also notice that $L \leq [\log n/\log 2]$.
	
	For $i \in \lbrace 0,\dots, L-1\rbrace$, denote
	\begin{align*}
	\Sb_i = \sum\limits_{j \in K_{B_i}} \Ub_j^{(i)} \text{ and } \Sb_L = \sum_{j \in K_{B_{L-1}^c}} \Ub^{(L)}_j.
	\end{align*}
	Then we observe
	\begin{align*}
	\sum_{j = 1}^n \Xb_j= \sum_{i = 0}^{L}\Sb_i.
	\end{align*}
	
	{\bf Step II (Bounding Laplacian transform).} This step hinges on the following lemma, which provides an upper bound for the Laplace transform of sum of a sequence of random matrices which are $\tau$-mixing with geometric decay, i.e.,~$\tau(k) \leq \psi_1\exp\{-\psi_2(k-1)\}$ for all $k \geq 1$ for some constants $\psi_1, \psi_2 > 0$.
	\begin{lemma}[Proof in Section \ref{sec:lem2}]\label{lem:bernlm2}
		For a sequence of $p \times p$ matrices $\lbrace \Xb_i \rbrace$, $i \in \lbrace 1,\dots, B\rbrace$ satisfying conditions in Theorem \ref{thm:bern} with $M = 1$ and $\psi_1\geq p^{-1}$, there exists a subset $K_B \subseteq \lbrace 1,\dots, B\rbrace$ such that for $0 < t \leq \min\{1, \frac{\psi_2}{8\log(\psi_1B^6p)}\}$,
		\begin{align*}
		\log\E\tr\exp\bigg(t\sum_{j \in K_B}\Xb_j\bigg)
		\leq \log p + 4h(4)Bt^2\nu^2 + 151\Big[1 + \exp\Big\{ \frac{1}{\sqrt{p}} \exp\Big(-\frac{\psi_2}{64t}\Big)\Big\}\Big]\frac{t^2}{\psi_2} \exp\Big(-\frac{\psi_2}{64t}\Big).
		\end{align*}
	\end{lemma}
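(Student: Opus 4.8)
First I would reduce to $M=1$ by replacing $\Xb_j$ with $\Xb_j/M$ and $t$ with $tM$, exactly as in Case I of the proof of Theorem \ref{thm:bern}. The plan is then to exploit the hierarchical structure of the Cantor-like set from Section \ref{sec:cantor} and to strip off one level of temporal dependence at a time using Lemma \ref{decoup}. For each level $k\in\{0,\dots,\ell\}$ I introduce
$$
G_k := \max\Big\{\E\tr\exp\Big(t\sum_{j\in K_B}\tilde\Xb_j\Big),\ \E\tr\exp\Big(-t\sum_{j\in K_B}\tilde\Xb_j\Big)\Big\},
$$
where the indices in $K_B$ are grouped into the $2^k$ blocks $K_k^1,\dots,K_k^{2^k}$, these blocks are replaced by mutually independent copies, but the dependence \emph{within} each block is retained. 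By construction $G_0=\E\tr\exp(t\sum_{j\in K_B}\Xb_j)$ up to the harmless maximum with its negative counterpart, while $G_\ell$ is an expectation over the $2^\ell$ genuinely independent innermost blocks $I_\ell^i$, each of cardinality $n_\ell$.

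\textbf{The recursion.} At level $k$ I would apply Lemma \ref{decoup} with $d=2^k$ pairs: each $H_1^j=K_k^j$, of cardinality $2^{\ell-k}n_\ell$, splits into its two halves $K_{k+1}^{2j-1},K_{k+1}^{2j}$, which are spaced by $d_k$ integers. The coupling distance $\tau_0$ between the two halves is controlled through the $\tau$-mixing coefficient at lag $d_k$ by Lemma \ref{lem:tauup}, giving $\tau_0\leq\card(K_{k+1}^{2j})\,\psi_1\exp\{-\psi_2(d_k-1)\}$; this is precisely the place where the $\tau$-coupling replaces Berbee's $\beta$-coupling used in \cite{banna2016bernstein}. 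Since $\sum_i\binom{d}{i}(1+L_1+L_2)^{d-i}L_1^{i}=(1+2L_1+L_2)^{d}$ and both inequalities of Lemma \ref{decoup} feed the same level-$(k+1)$ quantity, I obtain the one-step bound
$$
G_k\leq\big(1+2L_1^{(k)}+L_2^{(k)}\big)^{2^k}G_{k+1},
$$
with $L_1^{(k)}=pt\epsilon_k\exp(t\epsilon_k)$ and $L_2^{(k)}=\exp\{2^{\ell-k}n_\ell t\}\,\tau_0^{(k)}/\epsilon_k$, the auxiliary scale $\epsilon_k>0$ being free to choose at each level. Unfolding from $k=0$ to $k=\ell$ and using $\log(1+x)\leq x$ then gives
$$
\log G_0\leq\log G_\ell+\sum_{k=0}^{\ell-1}2^k\big(2L_1^{(k)}+L_2^{(k)}\big).
$$

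\textbf{Base case.} The term $\log G_\ell$ I would control by the standard matrix Laplace-transform machinery for \emph{independent} summands, following the independent-block step of \cite{banna2016bernstein}. Writing $\tilde\Sb_i=\sum_{j\in I_\ell^i}\tilde\Xb_j$, independence across $i$ combined with Tropp's subadditivity of matrix cumulants \cite{tropp2015introduction} and the elementary bound $\E e^{t\tilde\Sb_i}\preceq\exp\{t^2 h(t\|\tilde\Sb_i\|)\,\E\tilde\Sb_i^2\}$ reduces matters to $\lambda_{\max}\big(\sum_i\E\tilde\Sb_i^2\big)$. The definition of $\nu^2$ yields $\lambda_{\max}(\E\tilde\Sb_i^2)\leq n_\ell\nu^2$, and subadditivity of $\lambda_{\max}$ together with $\card(K_B)=2^\ell n_\ell\leq B$ gives $\lambda_{\max}(\sum_i\E\tilde\Sb_i^2)\leq B\nu^2$; the controlled Cantor block size $n_\ell$ and the admissible range of $t$ keep the argument of $h$ below an absolute constant, producing $\log G_\ell\leq\log p+4h(4)Bt^2\nu^2$.

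\textbf{The main obstacle.} The hard part is the final summation, namely showing that after the optimal choice of $\epsilon_k$ the penalty $\sum_{k=0}^{\ell-1}2^k(2L_1^{(k)}+L_2^{(k)})$ collapses to $151[1+\exp\{p^{-1/2}\exp(-\psi_2/(64t))\}]\frac{t^2}{\psi_2}\exp(-\psi_2/(64t))$. Balancing $L_1^{(k)}$ against $L_2^{(k)}$ makes each summand of order $2^{k}\sqrt{pt}\,\sqrt{\exp\{2^{\ell-k}n_\ell t\}\tau_0^{(k)}}$, and this geometric factor must be summed by feeding in the precise Cantor parameters: $\delta=\frac{\log 2}{2\log B}$, the gap bound $d_k\geq\frac{B\delta(1-\delta)^k}{2^{k+1}}$, and $2^{\ell-k}n_\ell\leq B2^{-k}$. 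The growth factor $\exp\{2^{\ell-k}n_\ell t\}\leq\exp(Bt2^{-k})$ is kept from overwhelming the decay $\exp\{-\psi_2 d_k\}$ exactly by the admissibility constraint $t\leq\frac{\psi_2}{8\log(\psi_1 B^6 p)}$ (note $\psi_1\geq p^{-1}$ forces $\log(\psi_1 B^6 p)\geq 6\log B$, so $t\delta$ is controlled), while $\psi_1\geq p^{-1}$ also produces the factor $p^{-1/2}$ via $\sqrt{p\psi_1}\geq 1$; the dominant contribution comes from the innermost level $k=\ell-1$, whence the exponent $\psi_2/(64t)$. This step parallels the corresponding computation in \cite{banna2016bernstein}, the only structural change being the systematic replacement of the Berbee $0$-$1$ coupling distance by the $\tau$-coupling distance furnished by Lemmas \ref{lem:tauup} and \ref{lem:dedecker}.
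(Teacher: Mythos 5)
Your overall architecture---iterated application of Lemma \ref{decoup} down the Cantor hierarchy, with the $\tau$-coupling of Lemmas \ref{lem:tauup} and \ref{lem:dedecker} replacing Berbee's coupling, followed by a penalty summation using $\log(1+x)\leq x$---matches Steps I and III of the paper's proof, and your one-step recursion $G_k\leq(1+2L_1^{(k)}+L_2^{(k)})^{2^k}G_{k+1}$ is correct. The genuine gap is in your base case. You decouple all the way down to the innermost blocks $I_\ell^i$, whose cardinality is $n_\ell\asymp\log B$ (the construction stops when $B\delta(1-\delta)^{k-1}/2^k<2$ with $\delta=\frac{\log 2}{2\log B}$, which forces $n_\ell\leq 4/\delta+1$), and then assert that ``the admissible range of $t$ keeps the argument of $h$ below an absolute constant.'' That is false: the admissible range only gives $t\leq\psi_2/\{8\log(\psi_1B^6p)\}\leq\psi_2/(48\log B)$, so $t\,n_\ell$ can be of order $\psi_2$, a model parameter that may be arbitrarily large. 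Since $h$ is increasing with $h(x)\sim e^x/x^2$, your base-case bound reads $\log p+t^2h(c\psi_2)B\nu^2$, which blows up exponentially in $\psi_2$ (precisely in the fast-mixing regime) instead of producing the absolute constant $4h(4)$ required by the lemma.

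The paper avoids this with two devices you are missing. First, a case split: when $t\leq 4/B$ no decoupling is needed at all, since $t\lambda_{\max}\big(\sum_{i\in K_B}\Xb_i\big)\leq tB\leq 4$ already bounds the argument of $h$. Second, when $t>4/B$, the Cantor decoupling is stopped not at level $\ell$ but at an adaptively chosen level $J$ (the first level whose block size is at most $\min\{\psi_2/(8t^2),B\}$), and a second, non-Cantor decomposition is then performed \emph{inside} each decoupled block: each block of size $q$ is cut into consecutive sub-blocks of length $\tilde q=[2/t]\wedge[q/2]$, the even- and odd-indexed sub-blocks are separated via Jensen's inequality (convexity of $\tr\exp$), the odd sub-blocks are decoupled at lag $\tilde q$ using Lemma \ref{lem:dedecker}, and only then is the independent-sum Laplace bound (Lemma 4 of \cite{banna2016bernstein}) applied, every sub-block sum satisfying $2\lambda_{\max}(\cdot)\leq 2\tilde q\leq 4/t$. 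It is this scale-$2/t$ subdivision---not the Cantor geometry, whose innermost scale $n_\ell$ is tied to $B$ rather than to $t$---that yields the constant $h(4)$ and, through $\tau_{\tilde q+1}\leq\psi_1\exp(-\psi_2\tilde q)$, the exponent $\exp\{-\psi_2/(64t)\}$ in the error term; your attribution of that exponent to the innermost Cantor level $k=\ell-1$ is off for the same reason. Without these two devices your argument cannot reach the stated bound.
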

	
	For each $\Sb_i, i \in \lbrace 0,\dots, L-1\rbrace$, by applying Lemma \ref{lem:bernlm2}  with $B = B_i$, we have for any positive $t$ satisfying $0 < t \leq  \min\{1,\frac{\psi_2}{8\log\{\psi_1 (n2^{-i})^6p\}}\}$,
	\begin{align*}
	&\log\E\tr\exp(t\Sb_i) 
	\leq \log p + t^2(C_12^{-i}n + C_{2,i})
	\end{align*}
	where $C_1 := 4h(4)\nu^2,C_{2,i} := 302\cdot2^{\frac{6i}{8}}/\psi_2n^{\frac{6}{8}}$.
	%\begin{align*}
	%&C_1 := 4h(4)\nu^2,C_{2,i} := 5(1+e)M^2
	%M^2\Big\{ \Big(\frac{M}{p\psi_1}\Big)^{\frac{1}{2}} + 2^{\frac{5i}{2}}n^{-\frac{5}{2}}\Big\}.
	%\end{align*} 
	
	Denote
	\begin{align*}
	\tilde{f}(\psi_1, \psi_2, i) &:= \min\Big\{1, \frac{\psi_2}{8\log\{\psi_1 (n2^{-i})^6p\}}\Big\}.
	\end{align*}
	For any $0 < t \leq \tilde{f}(\psi_1, \psi_2, i)$, we obtain
	\begin{align*}
	\log\E\tr\exp(t\Sb_i)
	\leq \log p + \frac{t^2(C_12^{-i}n + C_{2,i})}{1 - t/\tilde{f}(\psi_1, \psi_2, i)} \leq  \log p + \frac{t^2\{C_1^{\frac{1}{2}}(2^{-i}n)^{\frac{1}{2}} + C_{2,i}^{\frac{1}{2}}\}^2}{1 - t/\tilde{f}(\psi_1, \psi_2, i)}.
	\end{align*}
	
	For $\Sb_L$, since $B_L \leq 2$, for $0 < t \leq 1$,
	\begin{align*}
	\log\E\tr\exp(t\Sb_L) &\leq \log p + t^2h(2t)\lambda_{\max}\{\E(\Sb_L^2)\} \leq \log p + \frac{2t^2\nu^2}{1-t}. 
	\end{align*}
	
	Denote
	$\sigma_i := C_1^{\frac{1}{2}}(2^{-i}n)^{\frac{1}{2}} + C_{2,i}^{\frac{1}{2}},\ \sigma_L := \sqrt{2}\nu,\ \kappa_i := 1/\tilde{f}(\psi_1, \psi_2, i),\ \text{and }\ \kappa_L := 1$.
	
	Summing up, we have
	\begin{align*}
	\sum_{i = 0}^{L} \sigma_i &= \sum_{i = 0}^{L-1} \{C_1^{\frac{1}{2}}(2^{-i}n)^{\frac{1}{2}} + C_{2,i}^{\frac{1}{2}} \}+ \sqrt{2}\nu \leq  15\sqrt{n}\nu + 60\sqrt{1/\psi_2},\\
	\sum_{i = 0}^L \kappa_i &\leq \frac{\log n}{\log 2}\max\Big\{1,\frac{8\log(\psi_1n^6p)}{\psi_2}\Big\}:= \tilde{\psi}(\psi_1,\psi_2,n,p).
	\end{align*}
	
	Hence by Lemma 3 in \cite{merlevede2009bernstein}, for $0 < t \leq \{\tilde{\psi}(\psi_1,\psi_2,n,p)\}^{-1}$, we have 
	\begin{align*}
	\log\E\tr\exp\Big(t\sum_{j = 1}^n\Xb_j\Big)
	\leq\log p + \frac{t^2\Big(15\sqrt{n}\nu +60\sqrt{1/\psi_2}\Big)^2}{1 - t\tilde{\psi}(\psi_1,\psi_2,n,p)}.
	\end{align*}
	
	{\bf Step III (Matrix Chernoff bound).} Lastly by matrix Chernoff bound, we obtain
	\begin{align*}
	\P\Big\{\lambda_{\max}\Big(\sum_{j = 1}^{n}\Xb_j\Big) \geq x\Big\}
	\leq p \exp\Big\{-\frac{x^2}{8(15^2n\nu^2 + 60^2/\psi_2) + 2x\tilde{\psi}(\psi_1,\psi_2,n,p)}\Big\}.
	\end{align*}
	
	{\bf Case II.} We consider general $M>0$. It is obvious that if $\{\Xb_t\}_{t \in \Z}$ is a sequence of $\tau$-mixing random matrices such that $\tau(k; \{\Xb_t\}_{t \in \Z}, \norm{\cdot}) \leq M\psi_1\exp\{-\psi_2(k-1)\}$, then $\{\Xb_i/M\}_{i \in \Z}$ is also a sequence of $\tau$-mixing random matrices such that $\tau(k; \{\Xb_t/M\}_{t \in \Z}, \norm{\cdot}) \leq \psi_1\exp\{-\psi_2(k-1)\}$ and $\norm{\Xb_t/M} \leq 1$. Then applying the result of Case I to $\{\Xb_i/M\}_{i \in \Z}$, we obtain
	\begin{align*}
	&\P\Big\{\lambda_{\max}\Big(\sum_{j = 1}^{n}\Xb_j/M\Big) \geq x\Big\} 
	\leq  p \exp\Big\{-\frac{x^2}{8(15^2n\nu_M^2 + 60^2/\psi_2) + 2x\tilde{\psi}(\psi_1,\psi_2,n,p)}\Big\},
	\end{align*}
	where $\nu_M^2 := \sup_{K\subseteq \lbrace 1,\dots, n \rbrace}\frac{1}{\card(K)}\lambda_{\max}\bigg\{\E\bigg(\sum_{i \in K}\Xb_i/M\bigg)^2\bigg\} = \nu^2/M^2 $ for $\nu^2$ defined in Theorem \ref{thm:bern}. Thus
	\begin{align*}
	&\P\Big\{\lambda_{\max}\Big(\sum_{j = 1}^{n}\Xb_j\Big) \geq x\Big\} 
	\leq  p \exp\Big\{-\frac{x^2}{8(15^2n\nu^2 + 60^2M^2/\psi_2) + 2xM\tilde{\psi}(\psi_1,\psi_2,n,p)}\Big\}.
	\end{align*}
	This completes the proof of Theorem \ref{thm:bern}.
\end{proof}

\subsection{The proof of Lemma \ref{lem:bernlm2}}\label{sec:lem2}
\begin{proof}
	Let $K_B$ be constructed as in Section \ref{sec:cantor} for any arbitrary $B \geq 2$ and $M = 1$.
	
	{\bf Case I.} If $0 < t \leq 4/B$, by Lemma 4 in \cite{banna2016bernstein}, we have
	\begin{align*}
	\E\tr\exp\Big(t\sum_{i\in K_B} \Xb_i\Big) \leq p\exp\Big[t^2h\Big\{t\lambda_{\max}\Big(\sum_{i\in K_B} \Xb_i\Big)\Big\}
	\lambda_{\max}\Big\{\E\Big(\sum_{i\in K_B}\Xb_i\Big)^2\Big\}\Big]. 
	\end{align*}
	By Weyl's inequality, $\lambda_{\max}(\sum_{i\in K_B} \Xb_i) \leq B$ since $\card(K_B) \leq B$, and by definition of $\nu^2$ in Theorem \ref{thm:bern}, we have $\lambda_{\max}\{\E(\sum_{i\in K_B}\Xb_i)^2\} \leq B\nu^2$. Therefore, we obtain $h\{t\lambda_{\max}(\sum_{i\in K_B} \Xb_i)\} \leq h(tB) \leq h(4)$ and 
	\begin{align}\label{3.2.0}
	\E\tr\exp\Big(t\sum_{i\in K_B} \Xb_i\Big) \leq p\exp\{t^2h(4)B\nu^2\}. 
	\end{align}
	
	{\bf Case II.} Now we consider the case where $4/B < t \leq   \min\{1, \frac{\psi_2}{8\log(\psi_1B^6p)}\}$.
	
	{\bf Step I.}  Let $J$ be a chosen integer from $\lbrace 0, \dots, \ell_B\rbrace$ whose actual value will be determined later. We will use the same notation to denote Cantor-like sets as in Section \ref{sec:cantor}. By Lemma \ref{decoup} and similar induction argument as in \cite{banna2016bernstein}, we obtain
	\begin{align}\label{2.2.3}
	\E\tr\exp\Big(t\sum_{j \in K_0^1}\Xb_j\Big)
	\leq &\sum\limits_{i_1 = 0}^{2^0}\dots\sum\limits_{i_J = 0}^{2^{J-1}}\Big[\Big(\prod\limits_{k = 1}^{J} A_{k,i_k}\Big)\E\tr\exp\Big\{(-1)^{\sum_{k = 1}^{J} i_k}t\Big(\sum\limits_{i' = 1}^{2^J}\sum_{j \in K_{J}^{i'}}\tilde{\Xb}_j\Big)\Big\}\Big],
	\end{align}
	where $\lbrace\tilde{\Xb}_j\rbrace_{j \in K_{J}^{i'}}$ for $i' \in \lbrace 1, \cdots, 2^J \rbrace$ are mutually independent and have the same distributions as $\lbrace\Xb_j\rbrace_{j \in K_{J}^{i'}}$ for $i' \in \lbrace 1, \cdots, 2^J \rbrace$ ,and 
	\begin{align*}
	A_{k,i_k} := & \binom{2^{k-1}}{i_k}(1 + L_{k,1} + L_{k,2})^{2^{k-1}-i_k}(L_{k,1})^{i_k},\\
	\epsilon_k 
	:=&(2pt)^{-\frac{1}{2}}\{2^{\ell-k}n_\ell\exp(t2^{\ell-k+1} n_\ell)\tau_{d_{k-1}+1}\}^{\frac{1}{2}},\\
	L_{k,1} := &(pt/2)^{\frac{1}{2}}\exp(t\epsilon_k)\{2^{\ell-k}n_\ell\exp(t2^{\ell-k+1} n_\ell)\tau_{d_{k-1}+1}\}^{\frac{1}{2}},\\
	L_{k,2} := &(2pt)^{\frac{1}{2}}\exp(t\epsilon_k)\{2^{\ell-k}n_\ell\exp(t2^{\ell-k+1} n_\ell)\tau_{d_{k-1}+1}\}^{\frac{1}{2}},
	\end{align*}

	\textbf{Step II:} Now we choose $J$ as follows:
	\begin{align*}
	J & = \inf \Big\lbrace k\in \lbrace 0, \dots, \ell\rbrace: \frac{B(1-\delta)^k}{2^k} \leq \min \Big\{\frac{\psi_2}{8t^2}, B\Big\}\Big\rbrace.
	\end{align*}
	
	We first bound $\E\tr\exp\{t(\sum\limits_{i' = 1}^{2^J}\sum_{j \in K_{J}^{i'}}\tilde{\Xb}_j)\}$ and $\E\tr\exp\{-t(\sum\limits_{i' = 1}^{2^J}\sum_{j \in K_{J}^{i'}}\tilde{\Xb}_j)\}$. From \eqref{2.2.3} we obtain $2^J$ sets of $\lbrace \tilde{\Xb}_j\rbrace$ that are mutually independent. To make notation less cluttered, we will remove the upper tilde from $ \tilde{\Xb}_j$ for all $j$. Denote the number of matrices in each set $K_{J}^i$ to be $q := 2^{\ell-J}n_\ell$. For each set $K_{J}^i,\ i \in \lbrace 1,\dots, 2^J\rbrace$, we divide it into consecutive sets with cardinality $\tilde{q}$ and potentially a residual term if $q$ is not divisible by $\tilde{q}$. More specifically, we have $2\tilde{q} \leq q$ and $m_{q,\tilde{q}} := [q/2\tilde{q}]$. The value $\tilde q$ will be determined later.
	
	Then each set $K_{J}^i$ contains $2m_{q,\tilde{q}}$ numbers of sets with cardinality $\tilde{q}$ and one set with cardinality less than $2\tilde{q}$. For each $K_{J}^i,\ i \in \lbrace 1, \dots, 2^J\rbrace$, denote these consecutive sets described above by $Q_k^i,\ k \in \lbrace 1,\dots, 2m_{q,\tilde{q}}+1\rbrace$. Given these notation, we could rewrite the bound as the following:
	\begin{align*}
	&\E\tr\exp\Big(t\sum_{i = 1}^{2^J}\sum_{j \in K_{J}^i}\Xb_j\Big)\\
	= &\E\tr\exp\Big(t\sum_{i = 1}^{2^J}\sum_{k = 1}^{2m_{q,\tilde{q}}+1}\sum_{j \in Q_k^i}\Xb_j\Big)
	=\E\tr\exp\Big(t\sum_{i = 1}^{2^J}\sum_{k = 1}^{m_{q,\tilde{q}}}\sum_{j \in Q_{2k}^i}\Xb_j + t\sum_{i = 1}^{2^J}\sum_{k = 1}^{m_{q,\tilde{q}}+1}\sum_{j \in Q_{2k-1}^i}\Xb_j\Big).
	\end{align*}
	Since $\tr\exp(\cdot)$ is convex (cf. Proposition 2 in \cite{petz1994survey}), by Jensen's inequality, we have 
	\begin{align*}
	\E\tr\exp\Big(t\sum_{i = 1}^{2^J}\sum_{j \in K_{J}^i}\Xb_j\Big) &\leq \frac{1}{2}\E\tr\exp\Big(2t\sum_{i = 1}^{2^J}\sum_{k = 1}^{m_{q,\tilde{q}}}\sum_{j \in Q_{2k}^i}\Xb_j\Big) + \frac{1}{2}\E\tr\exp\Big(2t\sum_{i = 1}^{2^J}\sum_{k = 1}^{m_{q,\tilde{q}}+1}\sum_{j \in Q_{2k-1}^i}\Xb_j\Big).
	\end{align*}
	Since the number of odd index sets is always equal to or one more than that of the even index sets, the upper bound of $\frac{1}{2}\E\tr\exp\Big(2t\sum_{i = 1}^{2^J}\sum_{k = 1}^{m_{q,\tilde{q}}}\sum_{j \in Q_{2k}^i}\Xb_j\Big)$ will always be less than or equal to that of $\frac{1}{2}\E\tr\exp\Big(2t\sum_{i = 1}^{2^J}\sum_{k = 1}^{m_{q,\tilde{q}}+1}\sum_{j \in Q_{2k-1}^i}\Xb_j\Big)$. Hence we only need to provide an upper bound for 
	$\E\tr\exp\Big(2t\sum_{i = 1}^{2^J}\sum_{k = 1}^{m_{q,\tilde{q}}+1}\sum_{j \in Q_{2k-1}^i}\Xb_j\Big)$. Our goal is then to replace all $\lbrace\Xb_j\rbrace_{j\in Q_{2k-1}^i}$ in the last inequality by mutually independent copies $\lbrace \tilde{\Xb}_j\rbrace_{j\in Q_{2k-1}^i}$ with same distributions for $k \in \lbrace 1,\dots, 2m_{q,\tilde{q}}+1 \rbrace,\ i \in \lbrace 1,\dots, 2^J\rbrace$. Again we will proceed by induction. We first show
	\begin{align*}
	&\E\tr\exp\Big(2t\sum_{i = 1}^{2^J}\sum_{k = 1}^{m_{q,\tilde{q}}+1}\sum_{j \in Q_{2k-1}^i}\Xb_j\Big)\\ 
	\leq & 
	\sum\limits_{i_1 = 0}^{1}\tilde{A}_{i_1}
	\E\tr\exp\Big\{(-1)^{i_1}2t\Big(\sum_{k = 1}^{m_{q,\tilde{q}}+1}\sum_{j \in Q_{2k-1}^1}\tilde{\Xb}_j + \sum_{i = 2}^{2^J}\sum_{k = 1}^{m_{q,\tilde{q}}+1}\sum_{j \in Q_{2k-1}^i}\Xb_j\Big)\Big\},
	\end{align*}
	where the constants $\tilde A_{i_1}$ will be specified later. For each $\lbrace \Xb_j\rbrace_{j \in Q_{2k-1}^1},\ k \in \lbrace 1, \dots, m_{q,\tilde{q}}+1\rbrace$, we could find a sequence of $\lbrace \tilde{\Xb}_j\rbrace_{j \in Q_{2k-1}^1},\  k \in \lbrace 1, \dots, m_{q,\tilde{q}}+1\rbrace$ that are mutually independent with each other. More specifically, let $\lbrace \tilde{\Xb}_j\rbrace_{j\in Q_1^1}  = \lbrace \Xb_j \rbrace_{j\in Q_1^1}$. By applying Lemma \ref{lem:dedecker} on $\lbrace \tilde{\Xb}_j\rbrace_{j\in Q_1^1}$ and $\lbrace \Xb_j\rbrace_{j\in Q_3^1}$ with a chosen $\tilde{\epsilon} > 0$, we may find a sequence of random matrices $\lbrace \tilde{\Xb}_j\rbrace_{j \in Q_3^1}$ such that for each $j_0\in Q_3^1$, we have
	\begin{enumerate}
		\item $\tilde{\Xb}_{j_0}$ is measurable with respect to $\sigma(\lbrace \tilde{\Xb}_j\rbrace_{j\in Q_1^1}) \vee \sigma(\Xb_{j_0}) \vee \sigma(\tilde{U}_{j_0}^1)$;
		\item $\tilde{\Xb}_{j_0}$ is independent of $\sigma(\lbrace \tilde{\Xb}_j\rbrace_{j\in Q_1^1})$;
		\item $\tilde{\Xb}_{j_0}$ has the same distribution as $\Xb_{j_0}$;
		\item $\P(\parallel \tilde{\Xb}_{j_0} - \Xb_{j_0} \parallel \geq \tilde{\epsilon}) \leq \E(\parallel \tilde{\Xb}_{j_0} - \Xb_{j_0} \parallel)/\tilde{\epsilon} \leq \tau_{\tilde{q}+1}/\tilde{\epsilon}$ by Markov's inequality.
	\end{enumerate}
	For each $j_0\in Q_3^1$, $\tilde{U}_{j_0}^1$ is independent with $\lbrace \tilde{\Xb}_j\rbrace_{j\in Q_1^1}$ and $\Xb_{j_0}$. In addition, since there are at least $\tilde{q}$ number of matrices between $\lbrace \tilde{\Xb}_j\rbrace_{j\in Q_1^1}$ and $\Xb_{j_0}$ by our construction, we have $\tau\{\sigma(\lbrace \tilde{\Xb}_j\rbrace_{j\in Q_1^1}), \Xb_{j_0};       \norm{\cdot}\} \leq \tau_{\tilde{q}+1}$. Note that $\lbrace \tilde{\Xb}_j\rbrace_{j \in Q_3^1}$ is independent with $\lbrace \tilde{\Xb}_j \rbrace_{j\in Q_1^1}$ but not mutually independent within the set $Q_3^1$. 
	
	Following the induction steps similar to the previous step and without redundancy, we obtain
	\begin{align*}
	&\E\tr\exp\Big(2t\sum_{i = 1}^{2^J}\sum_{k = 1}^{m_{q,\tilde{q}}+1}\sum_{j \in Q_{2k-1}^i}\Xb_j\Big)\\ 
	\leq & 
	\sum\limits_{i_1 = 0}^{1}\tilde{A}_{i_1}
	\E\tr\exp\Big\{(-1)^{i_1}2t\Big(\sum_{k = 1}^{m_{q,\tilde{q}}+1}\sum_{j \in Q_{2k-1}^1}\tilde{\Xb}_j + \sum_{i = 2}^{2^J}\sum_{k = 1}^{m_{q,\tilde{q}}+1}\sum_{j \in Q_{2k-1}^i}\Xb_j\Big)\Big\},
	\end{align*}
	where 
	\begin{align*}
	\tilde{\epsilon} &:= (4pt)^{-\frac{1}{2}}\{\exp(2tq)\tau_{\tilde{q}+1}\}^{\frac{1}{2}},\\
	\tilde{L}_{1} &:= \frac{1}{2}(4pt)^{\frac{1}{2}}q\exp(2tq\tilde{\epsilon})\{\exp(2tq)\tau_{\tilde{q}+1}\}^{\frac{1}{2}},\\
	\tilde{L}_{2} &:= (4pt)^{\frac{1}{2}}q\{\exp(2tq)\tau_{\tilde{q}+1}\}^{\frac{1}{2}},\\
	\tilde{A}_{i_1} &:= \binom{1}{i_1}(1 + \tilde{L}_{1} + \tilde{L}_{2})^{1 - i_1}(\tilde{L}_{1})^{i_1},
	\end{align*}
	This completes the base case. 
	
	Iterating the above calculation, we arrive at the following bound:
	\begin{align}\label{2.3.2}
	&\E\tr\exp\Big(2t\sum_{i = 1}^{2^J}\sum_{k = 1}^{m_{q,\tilde{q}}+1}\sum_{j \in Q_{2k-1}^i}\Xb_j\Big) \nonumber\\
	\leq &\sum\limits_{i_1 = 0}^{1}\dots \sum\limits_{i_{2^J} = 0}^{1}\Big(\prod\limits_{r = 1}^{2^J}\! \tilde{A}_{i_r}\Big)
	\E\tr\exp\Big\{(-1)^{\sum_{r = 1}^{2^J}i_{r}}2t\sum_{i = 1}^{2^J}\sum_{k = 1}^{m_{q,\tilde{q}}+1}\sum_{j \in Q_{2k-1}^i}\tilde{\Xb}_j\Big\},
	\end{align}
	where $\lbrace \tilde{\Xb}_j\rbrace_{j \in Q_{2k-1}^i}$ for $(i,k) \in \lbrace 1,\dots, 2^J \rbrace \times \lbrace 1, \dots, m_{q,\tilde{q}}+1 \rbrace$ are mutually independent and identically distributed as  $\lbrace \Xb_j\rbrace_{j \in Q_{2k-1}^i}$ for $(i,k) \in \lbrace 1,\dots, 2^J \rbrace \times \lbrace 1, \dots, m_{q,\tilde{q}}+1 \rbrace$, and
	\begin{align*}
	\tilde{\epsilon} &:= (4pt)^{-\frac{1}{2}}\{\exp(2tq)\tau_{\tilde{q}+1}\}^{\frac{1}{2}},\\
	\tilde{L}_{1} &:= \frac{1}{2}(4pt)^{\frac{1}{2}}q\exp(2tq\tilde{\epsilon})\{\exp(2tq)\tau_{\tilde{q}+1}\}^{\frac{1}{2}},\\
	\tilde{L}_{2} &:= (4pt)^{\frac{1}{2}}q\{\exp(2tq)\tau_{\tilde{q}+1}\}^{\frac{1}{2}},\\
	\tilde{A}_{i_r} &:= \binom{1}{i_r}(1 + \tilde{L}_{1} + \tilde{L}_{2})^{1 - i_r}(\tilde{L}_{1})^{i_r}.
	\end{align*}
	
	Let $\tilde{q} := [2/t]\wedge [q/2]$. $\lbrace \tilde{\Xb}_j \rbrace_{j \in Q_{2k-1}^i}$ for $(i,k) \in \lbrace 1, \dots, 2^J\rbrace\times \lbrace 1,\dots, m_{q,\tilde{q}} +1 \rbrace$ are mutually independent with mean $\mathbf{0}$ and $2^J\sum_{k = 1}^{m_{\tilde{q}, q} + 1}\card(Q_{2k-1}^i) \leq B$. Moreover by Weyl's inequality, for $(i,k) \in \lbrace 1, \dots, 2^J\rbrace\times \lbrace 1,\dots, m_{q,\tilde{q}} +1 \rbrace$, we have
	\begin{align*}
	2\lambda_{\max}\Big(\sum_{j \in Q_{2k-1}^i} \tilde{\Xb}_j\Big) \leq 2\tilde{q} \leq \frac{4}{t}.
	\end{align*}
	By Lemma 4 in \cite{banna2016bernstein}, we obtain 
	\begin{align}
	&\E\tr\exp\Big(2t\sum_{i = 1}^{2^J}\sum_{k = 1}^{m_{q,\tilde{q}}+1}\sum_{j \in Q_{2k-1}^i}\tilde{\Xb}_j\Big) \leq p\exp\{4h(4)Bt^2\nu^2\},\label{2.3.2'}\\
	&\E\tr\exp\Big(-2t\sum_{i = 1}^{2^J}\sum_{k = 1}^{m_{q,\tilde{q}}+1}\sum_{j \in Q_{2k-1}^i}\tilde{\Xb}_j\Big) \leq p\exp\{4h(4)Bt^2\nu^2\}.\label{2.3.2''}
	\end{align}
	
	Plugging \eqref{2.3.2'} and \eqref{2.3.2''} into \eqref{2.3.2} and using the fact that $\sum\limits_{i_r = 0}^{1} \tilde{A}_{i_r} = 1 + 2\tilde{L}_1 + \tilde{L}_2$, we obtain
	\begin{align}\label{2.3.2'''}
	&\E\tr\exp\Big(2t\sum_{i = 1}^{2^J}\sum_{k = 1}^{m_{q,\tilde{q}}+1}\sum_{j \in Q_{2k-1}^i}\Xb_j\Big)
	\leq (1 + 2\tilde{L}_1 + \tilde{L}_2)^{2^J}
	p\exp\{4h(4)Bt^2\nu^2\}.
	\end{align}
	
	By replacing $\Xb$ by $-\Xb$, we obtain
	\begin{align}\label{2.3.2''''}
	&\E\tr\exp\Big(-2t\sum_{i = 1}^{2^J}\sum_{k = 1}^{m_{q,\tilde{q}}+1}\sum_{j \in Q_{2k-1}^i}\Xb_j\Big)
	\leq (1 + 2\tilde{L}_1 + \tilde{L}_2)^{2^J}
	p\exp\{4h(4)Bt^2\nu^2\}.
	\end{align}
	
	Combining \eqref{2.2.3} with \eqref{2.3.2'''} and \eqref{2.3.2''''}, we get
	\begin{align}\label{2.3.3}
	\E\tr\exp\Big(t\sum_{j \in K_B}\Xb_j\Big)
	\leq &
	\sum\limits_{i_1 = 0}^{2^0}\dots\sum\limits_{i_{J} = 0}^{2^{J-1}}\Big[ \Big( \prod\limits_{k = 1}^{J}A_{k,i_k}\Big) (1 + 2\tilde{L}_1 + \tilde{L}_2)^{2^J}
	p\exp\{4h(4)Bt^2\nu^2\}\Big]\nonumber\\
	= & \Big\{ \prod\limits_{k = 1}^{J} (1 + 2L_{k,1} + L_{k,2})^{2^{k-1}}\Big\}(1 + 2\tilde{L}_1 + \tilde{L}_2)^{2^J}
	p\exp\{4h(4)Bt^2\nu^2\},
	\end{align}
	where the last equality follows by $\sum_{i_k = 1}^{2^{k-1}}A_{k,i_k} = (1 + 2L_{k,1} + L_{k,2})^{2^{k-1}}$.
	
	By using $\log(1 + x) \leq x$ for $x \geq 0$, we have
	\begin{align}\label{2.3.5}
	&\log\E\tr\exp\Big(t\sum_{j \in K_B}\Xb_j\Big)
	\leq \sum\limits_{k = 1}^{J} 2^{k -1}(2L_{k,1} + L_{k,2})
	+2^J(2\tilde{L}_1 + \tilde{L}_2)
	+\log[p\exp\{4h(4)Bt^2\nu^2\}].
	\end{align}
	For simplicity, we denote $I = \sum\limits_{k = 1}^{J} 2^{k -1}(2L_{k,1} + L_{k,2}),\ II = 2^J(2\tilde{L}_1 + \tilde{L}_2)$ in \eqref{2.3.5}.
	
	\textbf{Step III:} Following calculations similar to \cite{banna2016bernstein}, we obtain
	\begin{align}\label{2.3.7}
	I
	&\leq \frac{32\sqrt{2}}{\log 2}\Big[1 + \exp\Big\{\frac{1}{\sqrt{2p}}\exp\Big(-\frac{\psi_2}{16t}\Big)\Big\}\Big]\frac{t^2}{\psi_2}\exp\Big(-\frac{\psi_2}{32t}\Big).
	\end{align}
	and
	\begin{align}\label{2.3.8}
	II \leq 128 \Big[1 + \exp\Big\{ \frac{1}{\sqrt{p}} \exp\Big(-\frac{\psi_2}{32t}\Big)\Big\}\Big]\frac{t^2}{\psi_2} \exp\Big(-\frac{\psi_2}{64t}\Big).
	\end{align}
	
	Hence by combining \eqref{3.2.0}, \eqref{2.3.5}, \eqref{2.3.7} and \eqref{2.3.8}, we obtain for $0 < t \leq  \min\{1, \frac{\psi_2}{8\log(\psi_1B^6p)}\}$,
	\begin{align*}
	&\log\E\tr\exp\Big(t\sum_{j \in K_B}\Xb_j\Big)\\
	\leq &\log p + 4h(4)Bt^2\nu^2 + 151\Big[1 + \exp\Big\{ \frac{1}{\sqrt{p}} \exp\Big(-\frac{\psi_2}{64t}\Big)\Big\}\Big]\frac{t^2}{\psi_2} \exp\Big(-\frac{\psi_2}{64t}\Big).
	\end{align*}
	This completes the proof of Lemma \ref{lem:bernlm2}.
\end{proof}

\bibliographystyle{apalike}
\bibliography{MIAref}
\end{document}